\documentclass[twoside,11pt]{amsart}

\usepackage[cmtip,all]{xy}
\usepackage{amssymb, xspace, enumerate, times, color, thmtools, hyperref}

\usepackage{soul}
\usepackage[capitalize]{cleveref}
\usepackage{graphicx}

\input xypic
\input xy
\xyoption{all}
\def\sqr#1#2{{\vcenter{\hrule height.#2pt
        \hbox{\vrule width.#2pt height#1pt \kern#1pt
                \vrule width.#2pt}
        \hrule height.#2pt}}}

\theoremstyle{plain}
\newtheorem*{TheoremA}{Theorem A}
\newtheorem*{TheoremB}{Theorem B}
\newtheorem*{TheoremC}{Theorem C}

\newtheorem{Theorem}{Theorem}[section]
\newtheorem{Lemma}[Theorem]{Lemma}
\newtheorem{Corollary}[Theorem]{Corollary}
\newtheorem{Proposition}[Theorem]{Proposition}

\theoremstyle{definition}
\newtheorem{Notation}[Theorem]{Notation}
\newtheorem{Assumptions and Discussion}[Theorem]{Assumptions and Discussion}

\newtheorem{Remark}[Theorem]{Remark}
\newtheorem{Example}[Theorem]{Example}
\newtheorem{Definition}[Theorem]{Definition}
\newtheorem{Question}[Theorem]{Question}

\newtheorem{Construction}[Theorem]{Construction}

\def\m{{\mathfrak m}}

\def\q{{\mathfrak q}}
\def\p{{\mathfrak p}}

\newcommand{\symp}[1]{#1^{(\ell)}}

\newcommand{\skel}[1]{^{[#1]}}
\newcommand{\dual}{^*}

\newcommand{\supp}[1]{{\rm supp}(#1)}
\newcommand{\msupp}[1]{{\rm m\!\!-\!\!supp}(#1)}

\newcommand{\term}[1]{\textbf{{#1}}}
\newcommand{\sfp}{SF}

\def\ZZ{{\mathbb Z}}
\def\NN{{\mathbb N}}

\def\kk{{\mathbb K}}

\newcommand{\mc}[1]{\mathcal{#1}}

\newcommand{\C}{\mc{C}}
\newcommand{\B}{\mathcal{B}}
\newcommand{\M}{\mathcal{M}}

\newcommand{\F}{\mathcal{F}}
\newcommand{\FF}{\mathbb{F}}

\def\Llra{\Longleftrightarrow}

\def\Lra{\Longrightarrow}
\def\lra{\longrightarrow}

\newcommand{\be}{\begin{equation*}}
\newcommand{\ee}{\end{equation*}}
\newcommand{\bee}{\begin{equation}}
\newcommand{\eee}{\end{equation}}

\def\h{{\rm ht}}

\def\Ass{{\rm Ass}}

\def\reg{{\rm reg}}

\def\LCM{{\rm LCM}}
\def\GCD{{\rm GCD}}

\def\sub{\subseteq}
\def\sup{\supseteq}

\def\vs{V} 
\def\con{/} 
\def\div{*} 
\def\ineq{\preceq} 
\def\stineq{\prec} 

\def\facet{\mathcal{F}} 

\newcommand\cm[1]{\mathcal{M}{(#1)}} 
\newcommand\ccm[1]{\cm{#1}^{+}} 
\newcommand\zcm[1]{\cm{#1}^{0}} 
\newcommand\cmname{{focal }} 
\newcommand\col[1]{C_{#1}} 
\newcommand\shift[1]{(-#1)} 
\newcommand\res{\mathbb{F}} 
\newcommand\IMCres{\mathbb{I}}
\newcommand\idealres[1]{\IMCres^{J_{\ineq #1}}} 
\newcommand\colres[1]{\mathbb{F}^{\col{#1}}} 

\newcommand{\HSIdeal}[2]{\textrm{HS}_{#1}(#2)}
\newcommand\mdeg{\textrm{mdeg}}

\newcommand{\Cmatroid}{{$C$-matroidal }}
\newcommand{\Cmatroidal}{{$C$-matroidal }}

\begin{document}

\title{Focal matroids of covers and Homological properties of  matroids} 
\author[Paolo Mantero]{Paolo Mantero}

\address{University of Arkansas, Fayetteville, AR 72701}
\email{pmantero@uark.edu}
\thanks{The first author was partially supported
by Simons Foundation grant \#962192.}

\author[Vinh Nguyen]{Vinh Nguyen}
\address{University of Arkansas, Fayetteville, AR 72701}
\email{vinhn@uark.edu}

\begin{abstract} 
In this paper we prove that the Stanley--Reisner ideal or cover ideal $I$ of a matroid  is minimally resolvable by iterated mapping cones. As a technical tool for this purpose, we introduce and study focal matroids, which are submatroids of a matroid $\M$ that are constructed relative to minimal $\ell$-covers of $\M$.

Our second main result is that the monomial support of the multigraded Betti numbers of $I$ corresponds precisely to the squarefree minimal generators of the symbolic powers of $I$. In fact, we prove that matroidal ideals are the only squarefree ideals with this property, thus obtaining a new homological characterization of matroidal ideals.

These techniques  are foundational for a follow-up paper, where we will show that all symbolic power of $I$ are minimally resolvable by iterated mapping cones.

\end{abstract}

\maketitle

\section{Introduction}

Iterated mapping cones is a general inductive procedure to produce a free resolution of an ideal $I$ in a commutative Noetherian ring $R$. In the graded or local setting, we say that $I$ is {\em minimally resolvable} by iterated mapping cones if the resulting free resolution is minimal. 
In general, however, it is quite rare for resolutions by mapping cones to be minimal. For instance, given a monomial ideal $I$ in a polynomial ring over a field, the Taylor resolution provides a free resolution of $I$ \cite{Ta66}. It is well-known that Taylor resolutions are resolutions by iterated mapping cones, yet they are almost never minimal.  

On the positive side, monomial ideals with {\em linear quotients} are known to be minimally resolvable by iterated mapping cones, \cite[Lemma~1.5]{HT02} and \cite[Lemma~2.1]{JZ10}. They comprise almost all known classes of examples in the literature of ideals that are  minimally resolvable by iterated mapping cones. These examples include: 
\begin{enumerate}[(i)]
	\item (Eliahou--Kervaire, \cite{EK90}) strongly stable ideals;
\item (Aramova-Herzog-Hibi, \cite{AHH98}) squarefree lexsegment ideals;
	\item (Herzog--Takayama \cite{HT02}) ideals with regular decomposition functions. This includes the ideals in (i) and (ii), as well as {\em B-matroidal ideals}, i.e.  squarefree monomial ideals $I$ for which the supports of the minimal generators of $I$ are the basis of some matroid\footnote{This is what the authors meant by ``Stanley--Reisner ideal of a matroid" in \cite[Thm~1.10]{HT02}, see also the paragraphs before \cite[Lemma~1.3]{HT02}. In fact, this potential confusion is the main motivation for using $B$- and $C$- prefixes to distinguish different meanings for ``matroidal ideal".}; 
    \item (Conca--Herzog, \cite{CH03}) products of polymatroidal ideals;
	\item (Kokubo-Hibi \cite{KH06}, Mohammadi--Moradi \cite{MoMo10}) weakly polymatroidal ideals;
    \item (Mantero \cite{Ma20}) symbolic powers of star configurations.
\end{enumerate} 

A main goal of this paper is to prove the following theorem.
\begin{TheoremA}\label{thmA}(\Cref{Matroid-Resolution})
 The Stanley--Reisner ideal of any matroid is minimally resolvable by iterated mapping cones.
\end{TheoremA}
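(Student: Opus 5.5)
The plan is to build the resolution by induction on the number of minimal generators, using a carefully chosen order of the generators. Let $\M$ be a matroid on ground set $[n]$ with circuits $C_1,\dots,C_s$. The Stanley--Reisner ideal is $I_\M=(x^{C_1},\dots,x^{C_s})$, where $x^C=\prod_{i\in C}x_i$. Fix an ordering of the circuits and set $I_j=(x^{C_1},\dots,x^{C_j})$. For each $j$ there is a short exact sequence
\[
0\to R/(I_{j-1}:x^{C_j})\,(-|C_j|)\to R/I_{j-1}\to R/I_j\to 0 .
\]
Given resolutions of $R/I_{j-1}$ and of the colon quotient, the mapping cone of a lift of multiplication by $x^{C_j}$ resolves $R/I_j$. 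The whole task is to choose the order so that two things hold at every step: the colon ideals are again of a type we can resolve minimally by mapping cones, and no cancellation occurs in the cone.

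\textbf{Step 1: colon ideals are matroidal.} Order the circuits by a suitable rule, for instance a lexicographic or focal order relative to an element $\ell$. Then identify
\[
J_j:=I_{j-1}:x^{C_j}=\bigl(x^{C_i\setminus C_j}\;:\; i<j\bigr).
\]
The circuit elimination axiom should make its minimal generators the squarefree monomials $x^{D}$ where $D$ ranges over the circuits of a smaller matroid. I expect this matroid to be essentially a contraction $\M/C_j$ restricted to suitable elements, or one of the paper's ``focal matroids'' built from minimal $\ell$-covers. I will verify that these $D$ satisfy the circuit axioms, and that the generators from contracted elements become variables, giving linear forms. Then $J_j$ is again a $C$-matroidal ideal, possibly plus variables, in fewer effective variables. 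Adding variables preserves minimal mapping-cone resolvability, since one can tensor with a Koszul complex, which is itself an iterated mapping cone. This sets up an induction on $n$, or on $|\M|$.

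\textbf{Step 2: induction and minimality.} By induction, both $R/I_{j-1}$ and $R/J_j$ have minimal resolutions $\mathbb F^{j-1}$ and $\mathbb G^j$ obtained by iterated mapping cones. The cone of $\varphi\colon \mathbb G^j(-C_j)\to\mathbb F^{j-1}$ is minimal exactly when $\varphi\otimes \kk=0$. This holds automatically whenever the multidegrees of $\mathbb G^j(-C_j)$ in homological degree $i$ never coincide with multidegrees of $\mathbb F^{j-1}$ in degree $i+1$. More precisely, a unit entry of $\varphi$ would require equal multidegrees. I will track multigraded supports: every multidegree of $\mathbb F^{j-1}$ is a union of circuits among $C_1,\dots,C_{j-1}$. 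The multidegrees of the shifted $\mathbb G^j$ are $C_j\cup(\text{union of }D\text{'s})$. I plan to show that the ordering forces these to differ, for instance by showing that each shifted degree contains $C_j$ while ``using'' a fixed focal element in a way no earlier union can. An alternative route to minimality is to compare total Betti numbers with a known formula, such as Hochster's formula combined with the known Betti numbers of matroid ideals via shellability of the dual. Equality of ranks then forces minimality.

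\textbf{Main obstacle.} The hard step is the combinatorial one: choosing an ordering of the circuits for which every colon ideal is again generated by the circuits of a matroid (the focal matroid), uniformly along the induction. A careless order, such as the plain lex order, gives colon ideals whose generators $C_i\setminus C_j$ are not the circuits of any matroid. I would first try ordering circuits by whether they contain a fixed element $\ell$: circuits avoiding $\ell$ (those of $\M\setminus\ell$) come first, handled inductively, and circuits containing $\ell$ come after. For the latter, the colon should be governed by the circuits of $\M/\ell$ together with a focal submatroid. Eliminating $\ell$ between two circuits through it then produces the needed earlier circuits.
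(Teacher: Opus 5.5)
Your Step 1 is the paper's ordering in dual form. The paper works with $J(\mathcal M)=I_{\mathcal M^*}$ and lists the generators avoiding $x_{v_1}$ first, recursively. For circuits this is exactly ``circuits avoiding $\ell$ first'', and your elimination remark is Proposition~\ref{Colon-Circuits-Equivalence}. You should still identify the colon precisely rather than as ``circuits of $\mathcal M/\ell$ plus a focal submatroid''. For $C_j\ni\ell$, elimination gives $I_{j-1}:x^{C_j}=I_{\mathcal M\setminus\ell}:x^{C_j}=I_{(\mathcal M\setminus\ell)/(C_j-\ell)}=I_{\mathcal M/C_j}$. The last equality holds because $\ell$ is a loop of $\mathcal M/(C_j-\ell)$. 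This is the dual of Theorem~\ref{Colons-Are-CocriticalMatroids}.

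The genuine gap is Step 2: you give no working argument for minimality.
\begin{enumerate}
\item Your criterion is misindexed. The comparison map is $\varphi_i\colon \mathbb G^j_i(-C_j)\to\mathbb F^{j-1}_i$, so you must separate multidegrees in the \emph{same} index $i$. Multidegrees of $\mathbb G^j_i(-C_j)$ and $\mathbb F^{j-1}_{i+1}$ can coincide even when the cone is minimal.
\item The separating feature you suggest (``contains $C_j$ and uses $\ell$ in a way no earlier union can'') does not work. Take $I=(x_2x_3,x_1x_2,x_1x_3)$, the circuits of $U_{1,3}$ in your order with $\ell=1$. The monomial $x_1x_2x_3$ occurs in $\mathbb F^2_2$ as the earlier union $\{2,3\}\cup\{1,2\}\supseteq C_3=\{1,3\}$, which also contains $\ell$. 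It occurs again in $\mathbb G^3_1(-x_1x_3)$.
\item The total-Betti-number route would need a recursive identity you have not supplied.
\end{enumerate}

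What is missing is the paper's invariant. Every shift in homological degree $h$ is a minimal generator of $SF_h(J)$ (Theorem~\ref{Matroid-Resolution}, driven by Corollary~\ref{Min-Square-Free}), and minimal generators of $SF_h$ and $SF_{h-1}$ never coincide. In your language, $x^S\in SF_h(I_{\mathcal M})$ iff $|S|-r(S)\ge h$. The invariant to carry through the induction is: every shift $x^S$ in homological degree $i$ has nullity $|S|-r(S)=i$. It must be part of the inductive statement, on $j$ and on the matroid, because the intermediate ideals $I_{j-1}$ are not matroidal and no formula gives their multidegrees. It propagates because, for $S\subseteq[n]-C_j$,
\[
|S\cup C_j|-r(S\cup C_j)=\bigl(|S|-r_{\mathcal M/C_j}(S)\bigr)+\bigl(|C_j|-r(C_j)\bigr)=\bigl(|S|-r_{\mathcal M/C_j}(S)\bigr)+1 .
\]
Hence $\varphi_i$ always maps generators of nullity $i+1$ to generators of nullity $i$. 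It has no unit entries, so every cone is minimal.
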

    
In our follow-up paper \cite{MN2}, we further prove that all symbolic powers of these ideals are minimally resolvable by iterated mapping cones.\smallskip

Being such a general procedure, there are three main challenges in writing an explicit minimal free resolution of $I$ obtained by iterated mapping cones. 
\begin{enumerate}
    \item The first one is developing a strong knowledge, independent of the inductive procedure, of the colon ideals and their resolutions.
    \item Secondly, the inductive procedure depends on a chosen ordering on the generators of $I$. Even if $I$ can be minimally resolvable by iterated mapping cone, one must find an ordering on the generators of $I$ which produces such  minimal resolution. 
    \item Lastly, an explicit description of the comparison map must be obtained at each step to give an explicit description of the differential maps in the free resolution.
\end{enumerate}

All the ideals in (i)--(iv) listed above have {\em linear quotients}. Recall that a monomial ideal $I\subseteq \kk[x_1,...,x_n]$ has linear quotient if there exists an ordering $f_1,\ldots,f_n$ on the set $G(I)$ of the minimal monomial generators of $I $ such that every colon ideal $(f_1,...,f_{n-1}):f_{n}$ is generated by a subset of $\{x_1,...,x_n\}$. Hence, the property of having linear quotients solves the first two challenges described above. The colon ideals are all monomial complete intersections, which are minimally resolvable by, for instance, the Koszul complex. The remaining challenge is to describe the comparison maps. This was done successfully in the following cases: (i)--(iii) above, 
and in \cite{Ga20} when $I$ is the Stanley--Reisner ideal of a uniform matroid. In general, however, even if $I$ has linear quotients, there is no known procedure to describe the comparison maps, or the differentials in the resolution of $I$. 

By (vi), if $I$ is the Stanley--Reisner ideal of a uniform matroid, then $\symp{I}$ has linear quotients. On the other hand, if $I =I_\M$ is the Stanley--Reisner ideal of any (independence complex of a) matroid $\M$, i.e. if $I$ is {\em C-matroidal}, then $\symp{I}$ does not necessarily have linear quotients. Despite this, we will show at the end of our two part paper that $\symp{I}$ is minimally resolvable by iterated mapping cone whenever $I$ is $C$-matroidal.  

To this end, we focus on investigating the cover ideal $J(\M)$ of any matroid $\M$. Indeed, by duality, $J(\M) = I_{\M^*}$, where $\M^*$ is the dual matroid of $\M$. Hence, results about $J(\M)$ for any matroid $\M$ can be applied to the dual matroid $\M^*$ to obtain results about $J(\M^*)=I_{\M^{**}} = I_{\M}$. Thus, for the following, we set $I := J(\M)$ for some matroid $\M$. 
\medskip

The key objects we employ to overcome challenge (1) in Theorem~A are the \textit{\cmname matroids} of $\M$, which we introduce and study in this paper. These are submatroids of $\M$ that are constructed relative to minimal covers of $\M$. For any simplicial complex $\Delta$ on $[n]$, an  $\ell$-cover of $\Delta$ is a function $\gamma:[n] \to \NN_{0}$ such that for any facet $F \subseteq [n]$ of $\Delta$, $\gamma(F) = \sum_{i \in F}\gamma(i) \geq \ell$. To any minimal $\ell$-cover $\gamma$ of $\Delta$ we associated the subcomplex $\Delta(\gamma) = \langle\{ F \in \F(\Delta) : \gamma(F) = \ell\} \rangle$ generated by facets where $\gamma$ obtains the smallest possible value. When $\Delta = \M$ is a matroid, we prove that $\cm \gamma$ is a submatroid of $\M$, which we call the {\em \cmname matroid} of $\M$ with respect to $\gamma$. The \cmname matroids of $\M$ play a crucial role in the resolution and Betti numbers of $I$. Indeed, one of our key technical result is an ordering on the minimal generators of $I$ so that the colon ideals are \Cmatroidal ideals. More precisely, we show they are cover ideals of co\cmname matroids of $\M$ (i.e. contractions of focal matroids), see \Cref{Colons-Are-CocriticalMatroids}. This addresses point (1) above. Therefore, a large part of our paper is dedicated to working out the technical structure of \cmname matroids, and the structure of their cover ideals. 

In regard to (2), we actually provide {\em many} different orderings which can be used. Indeed, \Cref{Con-Ordering} depends on a number of choices, and each of them gives an order on $G(J)$ which produces a minimal free resolution of $J$ by iterated mapping cones.

Leveraging the interpretation of the colon ideals as cover ideals of co\cmname matroids, we obtain one of our main results, Theorem~B, which provides the following intriguing connection: the multigraded shifts in any minimal resolution of any \Cmatroid ideal $J$ are precisely the squarefree minimal generators of the symbolic powers of $J$.  We employ Theorem~B in the proof of Theorem~A to obtain an efficient work-around of challenge (3). In fact, we actually show that the multi-graded shifts in homological degree $\ell$ in our resolution of $J$ by iterated mapping cones are precisely   the squarefree minimal generators of the $\ell$-th symbolic power of $J$. Since no minimal generator of $J^{(\ell)}$ can be a minimal generator of $J^{(\ell+1)}$, we immediately obtain that the resolution by iterated mapping cones is minimal, and the above-mentioned result:

\begin{TheoremB}\label{thmB}(\Cref{All-Degrees-Show-Up})
Let $J$ be any \Cmatroid ideal, and $\res_{\bullet}$ any multigraded minimal free resolution of $R/J$. Then $$G(\sfp_\ell(J)) = \mdeg(\res_\ell),$$
where $\mdeg(\res_\ell)$ is the set of monomials appearing as multigraded shifts in $\res_\ell$, and $\sfp_{\ell}(J)$ is the ideal generated by the squarefree monomials of $J^{(\ell)}$. 
\end{TheoremB}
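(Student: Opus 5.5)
We have to prove $G(\sfp_\ell(J)) = \mdeg(\res_\ell)$ for a \Cmatroid ideal $J$. By duality we may write $J = J(\M)$, the cover ideal of a matroid $\M$ on $[n]$. Then $J^{(\ell)}$ is generated by the monomials $x^\gamma$ with $\gamma$ an $\ell$-cover of $\M$. A squarefree minimal generator of $J^{(\ell)}$ is therefore $x_U$, where the indicator $\gamma = \chi_U$ is a minimal $\ell$-cover. Since $\M$ is a matroid, hence pure, minimality of a 0/1 cover says that removing any element of $U$ drops the value below $\ell$ on some basis. In other words, for every $i\in U$ there is a basis $B$ with $|B\cap U| = \ell$ and $i \in B$.

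\textbf{Betti numbers via Hochster.} I would compute Betti numbers with Hochster's formula for $J = I_{\M^*}$. A multidegree $x_U$ appears in $\res_\ell$ (with the convention $\res_1\leftrightarrow G(J)$) exactly when $\tilde H_{|U|-\ell-1}\big((\M^*)|_U;\kk\big)\neq 0$. The restriction of a matroid complex is again a matroid, hence shellable. So its reduced homology is nonzero only in the top dimension $\operatorname{rk}^*(U)-1$, and it is nonzero there iff $\M^*|_U$ has no coloops. Therefore $x_U\in\mdeg(\res_\ell)$ iff two conditions hold:
\begin{enumerate}[(a)]
\item $|U|-\operatorname{rk}^*(U)=\ell$;
\item $\M^*|_U$ has no coloops.
\end{enumerate}
This is independent of the chosen minimal resolution, since multigraded Betti numbers are invariants.

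\textbf{Translating to $\M$.} Using $\operatorname{rk}^*(U) = |U| + \operatorname{rk}(E\setminus U) - \operatorname{rk}(E)$, condition (a) reads $\operatorname{rk}(E)-\operatorname{rk}(E\setminus U)=\ell$. This says exactly $\min_B |B\cap U| = \ell$, because a basis minimizing $|B\cap U|$ extends a basis of $E\setminus U$. Condition (b) says that for each $i\in U$ we have $\operatorname{rk}^*(U\setminus i)=\operatorname{rk}^*(U)$. Translated, this means $\operatorname{rk}(E\setminus U \cup i) = \operatorname{rk}(E\setminus U)+1$, i.e.\ $i$ is not in the closure of $E\setminus U$. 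Equivalently, there is a basis $B$ containing a basis of $E\setminus U$ together with $i$, and such a $B$ satisfies $|B\cap U|=\ell$.

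Together, these give: $\chi_U$ is an $\ell$-cover attaining the value $\ell$, and every $i\in U$ lies in a basis on which the value is exactly $\ell$. That is precisely the statement that $\chi_U$ is a minimal $\ell$-cover. For the converse, suppose $\chi_U$ is a minimal $\ell$-cover. Then $\min_B|B\cap U|\ge \ell$, and minimality forces equality, since some basis through any $i\in U$ attains $\ell$. It also forces each $i$ to lie outside the closure of $E\setminus U$: a minimizing basis through $i$ contains a basis of $E\setminus U$ together with $i$, by the exchange property.

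\textbf{Main obstacle.} The delicate step is this last equivalence. One must check that a basis $B$ with $|B\cap U|=\ell$ meets $E\setminus U$ in a basis of $E\setminus U$ (true, since $\ell=\operatorname{rk}E-\operatorname{rk}(E\setminus U)$ forces $|B\setminus U|=\operatorname{rk}(E\setminus U)$). One must also handle $\ell$ correctly against the homological indexing of $\res$. Alternatively, one could appeal to the paper's colon-ideal description (\Cref{Colons-Are-CocriticalMatroids}), but the Hochster route seems self-contained.
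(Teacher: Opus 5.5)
Your argument is correct, and it reaches the statement by a genuinely different route from the paper's.

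\textbf{Your route.} You compute the multigraded Betti numbers outright. Hochster's formula for $J=I_{\M^*}$, together with the classical facts that the independence complex of $\M^*|_U$ is shellable and has nonzero top homology exactly when $\M^*|_U$ has no coloops, reduces everything to your rank conditions (a) and (b). Your translation of these into ``$\chi_U$ is a basic $\ell$-cover of $\M$'' is sound. The counting fact that a basis $B$ with $|B\cap U|=\ell$ meets $[n]\setminus U$ in a basis of $\M|_{[n]\setminus U}$ is exactly what drives both directions.

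\textbf{The paper's route.} The paper never computes homology. For $\mdeg(\res_h)\subseteq G(\sfp_h(J))$, it builds an explicit resolution by iterated mapping cones using the contraction ordering of \cref{Con-Ordering}. It identifies each colon ideal as $(J\con v):N$, the cover ideal of a cofocal matroid of height one less (\cref{Corr-Colon-Are-Matroid} and \cref{Colons-Are-CocriticalMatroids}). It then inducts on the height, with \cref{Min-Square-Free} guaranteeing that every shift $LN$ lies in $G(\sfp_h(J))$. For the reverse inclusion, it restricts to $\supp{N}$ via the Herzog--Hibi--Zheng lemma (\cref{Betti-Formula}). There $J|_{\supp{N}}$ is $C$-matroidal of height $h$, so the nonzero last module of its resolution must sit in the single multidegree allowed by the first inclusion, namely $N$.

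\textbf{What each buys.} Your approach is shorter and self-contained modulo classical topology. It also gives more: the actual values $\beta_{\ell,x_U}(R/J)=\dim_\kk\tilde H_{r_{\M^*}(U)-1}(\M^*|_U)=T_{\M^*|_U}(0,1)$ when $\ell=|U|-r_{\M^*}(U)$ (and $0$ otherwise), rather than only their support. Your no-coloop condition plays the role of the paper's top-degree nonvanishing after restriction. What it cannot give is information about a specific resolution. The paper's inclusion is proved for the a priori non-minimal mapping-cone resolution itself, so minimality of that resolution (Theorem A) comes for free, because $G(\sfp_h(J))\cap G(\sfp_{h+1}(J))=\emptyset$. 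Moreover, Hochster's formula is confined to squarefree ideals, whereas the colon-ideal/focal-matroid machinery is designed to be reused for the symbolic powers $J^{(\ell)}$ in the sequel.
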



The fact that the multigraded Betti numbers of a squarefree monomial ideal are the minimal generators of their symbolic powers is a curious phenomenon. In fact, we show that this property characterizes \Cmatroid ideals. Thus we obtain a new homological characterization of \Cmatroidal ideals. 

\begin{TheoremC}\label{thmC}(\Cref{Char-Matroid-100})
Let $J$ be a squarefree monomial ideal of $\h(J)\geq 2$. TFAE:
	\begin{enumerate}[(a)]
		\item $J$ is a \Cmatroid ideal;
		\item $\mdeg(\mathbb{F}_\ell) =G(\sfp_\ell(J))$ for some multigraded (not necessarily minimal) free resolution $\FF_\bullet$ of $R/J$, and for all $\ell\geq 1$;
				\item $\mdeg(\mathbb{F}_2) \subseteq G(\sfp_2(J))$ for some multigraded (not necessarily minimal) free resolution $\FF_\bullet$ of $R/J$.
	\end{enumerate} 
    
\end{TheoremC}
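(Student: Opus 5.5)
The plan is to prove (a)$\Rightarrow$(b)$\Rightarrow$(c)$\Rightarrow$(a). For (a)$\Rightarrow$(b), take $\FF_\bullet$ to be a multigraded minimal free resolution of $R/J$ and apply Theorem~B (\Cref{All-Degrees-Show-Up}) for every $\ell\geq 1$. The step (b)$\Rightarrow$(c) is immediate: take $\ell=2$ and weaken the equality to an inclusion. All the work is therefore in (c)$\Rightarrow$(a).

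For (c)$\Rightarrow$(a) I first reduce to the minimal resolution. Any multigraded free resolution $\FF_\bullet$ of $R/J$ contains a minimal one as a multigraded direct summand, so $\mdeg(\FF_2)$ contains the set $\{x_U : \beta_{2,U}(R/J)\neq 0\}$ of second multigraded Betti degrees. Hypothesis (c) then says that every $x_U$ with $\beta_{2,U}(R/J)\ne 0$ lies in $J^{(2)}$. Write $J=(x_C : C\in\C)$ with $\C$ the set of supports of the minimal generators. $J$ is \Cmatroidal exactly when $\C$ is the set of circuits of a matroid. Since $\C$ is an antichain, this amounts to circuit elimination: for $C_1\neq C_2$ in $\C$ and $e\in C_1\cap C_2$, some $C\in\C$ satisfies $C\subseteq (C_1\cup C_2)\setminus\{e\}$.

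Suppose elimination fails, and choose a violating triple $(C_1,C_2,e)$ with $U=C_1\cup C_2$ of minimal cardinality. I check two things.

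\emph{(i) $x_U\notin J^{(2)}$.} The monomial $x_{U\setminus\{e\}}$ is not in $J$, so some minimal prime $P$ of $J$ has $P\cap (U\setminus\{e\})=\emptyset$. Hence $|P\cap U|\le 1$. Since $J^{(2)}=\bigcap_{P\in\Ass(J)}P^2$ and $x_U$ is squarefree, this gives $x_U\notin P^2$.

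\emph{(ii) $\beta_{2,U}(R/J)\neq 0$.} This contradicts (c), and it is the main obstacle. I would use the upper Koszul simplicial complex $K^U=\{W\subseteq U : x_{U\setminus W}\in J\}$, for which $\beta_{2,U}(R/J)=\dim\tilde H_0(K^U)$, so I must show $K^U$ is disconnected. Its vertices are the $u\in U$ such that some $C\in\C$ lies in $U\setminus\{u\}$. The vertex $e$ is absent, and the minimal nonfaces are exactly the complements in $U$ of the maximal subsets of $U$ containing no generator. Both $U\setminus C_2$ and $U\setminus C_1$ are faces, since their complements contain $C_2$ and $C_1$. I would aim to show that no path joins a vertex of $U\setminus C_2$ to a vertex of $U\setminus C_1$.

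The idea for (ii) is as follows. An edge $\{u,v\}$ means some generator $C\subseteq U\setminus\{u,v\}$. Every generator $C\subseteq U$ must contain $e$, since otherwise $(C_1,C_2,e)$ would not be violating. Minimality of $|U|$ should then force each such $C$ to meet both $C_1\setminus C_2$ and $C_2\setminus C_1$ in a way that separates the two sides. Concretely, if $C\in\C$, $C\subseteq U$ and $C\neq C_1,C_2$, then applying elimination to the pairs $(C,C_1)$ and $(C,C_2)$ at $e$, whose unions are strictly smaller than $U$ unless they equal $U$, produces generators inside $U$ avoiding $e$, which is a contradiction. This suggests the only generators inside $U$ are $C_1$ and $C_2$ plus ones with $C\cup C_i=U$. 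Once that structure is pinned down, the vertex set splits as $(U\setminus C_2)\sqcup(U\setminus C_1)$ with no edges across, so $K^U$ is disconnected. If this direct analysis proves too fiddly, my fallback is to phrase the same argument in terms of the lcm-lattice of $J$ restricted below $x_U$.
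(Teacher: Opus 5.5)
Your proof is correct, and for (c)$\Rightarrow$(a) it takes a different route from the paper. (The steps (a)$\Rightarrow$(b)$\Rightarrow$(c) are the same as the paper's.)

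\textbf{How the two routes differ.} The paper introduces $\LCM_2(J)$, the ideal generated by all pairwise lcm's of minimal generators. It uses that the minimal generators of $\LCM_2(J)$ occur among the shifts of $\FF_2$, so (c) forces the ideal containment $\LCM_2(J)\subseteq \sfp_2(J)$. It then asserts, without details, that this containment is equivalent to the circuit axioms.

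You instead argue by a minimal counterexample. You pick a failing elimination triple $(C_1,C_2,e)$ with $|U|$ minimal, show directly that $x_U\notin J^{(2)}$, and compute through the upper Koszul complex that $U$ is a second multigraded Betti degree.

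\textbf{What each route buys.} The paper's route is shorter. Because it passes to the ideal $\LCM_2(J)$, it only needs to know that the \emph{minimal} pairwise lcm's are Betti degrees. Your $U$ need not be a minimal pairwise lcm, since another pair of generators inside $U$ may have a smaller union. That is exactly why you need the minimality of $|U|$ to control all generators inside $U$. In exchange, your argument is self-contained. It proves both facts the paper leaves as ``not hard to see'' and ``can be shown'': the Betti-degree statement and the equivalence with circuit elimination. It also exhibits an explicit offending shift.

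\textbf{One point to tighten in (ii).} Your last sentence says the vertex set of $K^U$ splits as $(U\setminus C_2)\sqcup(U\setminus C_1)$. This is false when $U$ contains further generators. Any such $C$ satisfies $C\cup C_1=C\cup C_2=U$, so $U\setminus C\subseteq C_1\cap C_2$, and its vertices lie on neither side. For example, take $C_1=\{e,a,x\}$, $C_2=\{e,b,x\}$, $C=\{e,a,b\}$; then the vertex $x$ is on neither side.

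The clean statement is as follows. Every $C\in\C$ with $C\subseteq U$ contains $e$. Your minimality argument, applied to \emph{any} two such generators rather than only to pairs involving $C_1$ or $C_2$, shows their union is all of $U$. Hence $K^U=\langle U\setminus C : C\in\C,\ C\subseteq U\rangle$ is a disjoint union of nonempty simplices. Therefore
$\beta_{2,U}(R/J)=\dim\tilde H_0(K^U)=|\{C\in\C : C\subseteq U\}|-1\geq 1$.

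Equivalently, $U\setminus C_1$ is by itself a connected component, distinct from the component containing $U\setminus C_2$. With this fix the argument is complete.
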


This new characterization involves $I^{(2)}$ instead of $I^{(3)}$, complementing the celebrated result by \cite{TT} (see also \cite{Var} and \cite{MT}) that $I$ is \Cmatroid ideal if and only if $I^{(3)}$ is Cohen-Macaulay if and only if $\symp{I}$ is Cohen-Macaulay for all $\ell \geq 1$. 

We remark that the minimal resolution of the Stanley--Reisner ideal of matroids, but not their symbolic powers, is already known by \cite{NPS}. We resolve $I$ anyway to demonstrate the potential application of our techniques in this first case. In our sequel paper, we will vastly generalize Theorem~B to connect the multigraded Betti numbers of any symbolic power $I^{(\ell)}$ to the minimal generators of higher symbolic powers of $I$.  Under a suitable ordering we show that the ideals of the \cmname matroids are also related to the colon ideals for any symbolic power $\symp{I}$. 

This paper is structured as follows. In \cref{Sec-2} we establish notation, recall a few facts about matroids and $C$-matroidal ideals, and present other basic results needed  throughout the paper.  

In \cref{Sec-3} we introduce \cmname matroids. We then explore their cover ideals and give a connection to symbolic powers. In \cref{Section-Ordering} we introduce and investigate orderings on $J(\M)$ used to prove our main results. We describe the colon ideals in these orderings and identify them as cover ideals of co\cmname matroids of $\M$. In \cref{Section-Resolution}, we combine the material from the previous sections to establish Theorems~A, B and C. 

\section{Preliminaries}\label{Sec-2}
  \subsection{Notation and basic facts}
We refer the reader to \cite{Oxley} and \cite{Welsh} for definitions and well-known facts about matroids, \cite{MS} 
for details about monomial ideals and simplicial complexes, and \cite{HHT} 
for results connecting covers of simplicial complex with monomial ideals and their symbolic powers. 

Throughout the paper we will adopt the following notation:
\begin{Notation}\label{Notation}
$\kk$ always denotes a field. We will always consider matroids or simplicial complexes over a finite vertex set, which we identify with $[n]$,  for some $n\geq 1$.

For any matroid $\M$ on $[n]$, we write:

\begin{itemize}
    \item $\B(\M)$, $\mc{I}(\M)$, and $\C(\M)$ for the sets of bases, independent sets, and circuits of $\M$, respectively;
    \item $r_\M(-)$, or simply $r(-)$ if $\M$ is understood, for the rank function of $\M$;
    \item $\M\dual$ for the dual matroid of $\M$;
    \item $\M^{\skel{i}}$ for the truncation of $\M$ to rank $i$;
    \item $\M|_A$, $\M\setminus A$, and $\M\con A$ for the matroid obtained from $\M$ by {\em  restriction} to $A$, {\em deletion} of $A$, and {\em contraction} along $A$, respectively, for any $A \subseteq [n]$;
    \item $U_{r,n}$ for the uniform matroid of rank $r$ on $[n]$.
\end{itemize} 
	
For any simplicial complex $\Delta$ on $[n]$, we write:
\begin{itemize}
\item $R:=\kk[x_1,\ldots,x_n]$ and $\m=(x_1,\ldots,x_n)$ for its homogeneous maximal ideal;
	\item $\F(\Delta)$ for the set of all facets of $\Delta$, and $\Delta = \langle F\,\mid\,F\in \F(\Delta)\rangle$;
	\item $F^*:=[n]-F$ for any $F\subseteq [n]$, and $\p_F:=(x_i\,\mid\,i\in F)\subseteq R$; 
	\item $J(\Delta):= \bigcap_{F \in \facet(\Delta)} \p_F\subseteq R$ for the {\em cover ideal} of $\Delta$;
		\item $I_\Delta := \bigcap_{F \in \facet(\Delta)} \p_{F^*}\subseteq R$ for the {\em Stanley--Reisner ideal} of $\Delta$;
	\item $I_\Delta^{(\ell)}:=\bigcap_{F\in \F(\Delta)}\left(\p_{F\dual}^\ell\right)$ for the {\em $\ell$-th symbolic power} of $I_\Delta$, and $\sfp_{\ell}(I):=SF(\symp{I})$ for the {\em squarefree part} of $\symp{I}$ (see below for the definition of $SF(-)$).

\item $\supp{M}=\{x_i\,\mid\,x_i \text{ divides }M \}$ for the support of any monomial $M$ in $R$. Often times, with a slight abuse of notation, we will identify $\supp{M}$ with $\{i\in [n]\,\mid\,x_i \text{ divides }M \}$. 
\item $x_F:=\prod_{i \in F} x_i$ for any $F\subseteq[n]$. Any squarefree monomial in $R$ has this form.
\end{itemize}

   For any monomial ideal $I$ in $R$, we adopt the following notation:
   \begin{itemize}
	\item $G(I)$ is the unique minimal generating set of $I$ consisting of monomials;
	\item $\sfp(I)$ is the ideal generated by all squarefree monomials in $I$;
	\item ${\rm Ass}(R/I)$ is the set of prime ideals $\p\subseteq R$ such that $\p=I:x$ for some $x\in R - I$; it is well--known that any such $\p$ has the form $\p=\p_F$ for some $F\subseteq [n]$;
	\item $\h\,I$ is the {\em height} of $I$, which is $\h\,I=\min\{|F|\,\mid\,\p_F\in \Ass(R/I)\}$; 
	\item $\dim(R/I)$ is the {\em dimension} of $R/I$, which is $n-\h\,I$;

\item	$\msupp{I}$ is the set $\bigcup_{M \in G(I)} \supp{M}$.
\end{itemize}

In this paper, we consider matroids as a subclass of (pure) simplicial complexes by identifying $\M$ with its independence complex.
\end{Notation}

To briefly illustrate part of Notation \ref{Notation}, consider $I=(x,y)^2\cap (x,z)^2\cap (y,z)^2$. Then $G(I)=\{xyz, x^2y^2, x^2z^2, y^2z^2\}$, $\Ass(R/I)=\{(x,y),\,(x,z),\,(y,z)\}$, $\h\,I=2$, $\sfp(I)=(xyz)$, $\supp{x^2y^2}=\{x,y\}$, and $\msupp{I} = \{x,y,z\}$.
\medskip

We now recall some well--known connections between generators of ideals associated to a matroid and the circuits and hyperplanes of the matroid. 

\begin{Proposition}\label{Basic-Matroid-Properties} 
	Let $\M$ be a matroid on $[n]$. Then 
\begin{enumerate}
    \item $\{F\subseteq [n]\,\mid\, x_F \in G(I_{\M}) \}$ is the set of circuits of $\M$.
    \item $\{F\subseteq [n]\,\mid\, x_F \in G(J(\M)) \}$ is the set of cocircuits of $\M$.
    \item $\{F\subseteq[n] \,\mid\, \p_F \in \Ass(R/J(\M))\} = \B(\M)$.
    \item $\{F\subseteq[n] \,\mid\, \p_F \in \Ass(R/I_\M)\} = \B(\M^*)$.
    \item $H$ is a hyperplane of $\M$ if and only if $x_{H\dual}$ is a minimal generator of $J(\M)$.
    \item $\dim(R/I_{\M})=r(\M)$, while $\h\,I_\M=\h\,J(\M\dual)=r(\M\dual)$. 
    \item $i\in [n]$ is a loop of $\M$ $\Llra$ $I_\M$ contains the variable $x_i$ $\Llra$ $J(\M)$ is extended from the smaller polynomial ring $\kk[x_j\,\mid\,j\neq i]$.
\end{enumerate}
\end{Proposition}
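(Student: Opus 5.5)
The plan is to derive every item from two standard facts. The first is the Stanley--Reisner correspondence: the squarefree monomials \emph{not} in $I_\Delta$ are exactly the $x_F$ with $F\in\Delta$. The second is the primary decomposition $J(\Delta)=\bigcap_{F\in\F(\Delta)}\p_F$, which says that $x_G\in J(\Delta)$ if and only if $G$ meets every facet. Together with the standard dictionary for matroids, these give the whole proposition. In that dictionary the facets of $\M$ are its bases; circuits are the minimal dependent sets; cocircuits are the minimal sets meeting every basis, equivalently the complements of hyperplanes; and $\B(\M^*)=\{B^*\mid B\in\B(\M)\}$.

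For (1): the squarefree monomials in $I_\M$ are the $x_F$ with $F$ dependent. A minimal generator is therefore $x_F$ with $F$ minimal dependent, which is exactly a circuit. For (2): $x_F\in J(\M)$ if and only if $F\cap B\neq\emptyset$ for every basis $B$. The minimal such $F$ are by definition the cocircuits, which are the circuits of $\M^*$. For (5): a cocircuit is the complement of a hyperplane. Indeed, $F$ meets every basis if and only if $F^*$ contains no basis, i.e. $r(F^*)<r(\M)$. A minimal such $F$ corresponds to a maximal $F^*$ of rank less than $r(\M)$, which is a closed set of rank $r(\M)-1$, i.e. a hyperplane. Combining this with (2) gives (5).

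For (3): the intersection $\bigcap_{B}\p_B$ is irredundant, since bases are pairwise incomparable. So the associated primes of $R/J(\M)$ are exactly the $\p_B$ with $B\in\B(\M)$. For (4): $I_\M=\bigcap_{B}\p_{B^*}$, again irredundant, and the $B^*$ are precisely the bases of $\M^*$. For (6): $\dim R/I_\M=n-\h I_\M$. By (4), $\h I_\M=\min|B^*|=n-r(\M)=r(\M^*)$, which gives $\dim R/I_\M=r(\M)$. By (3) applied to $\M^*$, $\h J(\M^*)=r(\M^*)$ as well. One can also note directly that $J(\M^*)=I_\M$ by comparing the decompositions in (3) and (4).

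For (7): $i$ is a loop if and only if $\{i\}$ is a circuit, which by (1) holds if and only if $x_i\in G(I_\M)$, i.e. $x_i\in I_\M$. For the last equivalence, note that $i$ is a loop if and only if $i$ lies in no basis. By (2), this holds if and only if $i$ lies in no cocircuit, since each element of a basis lies in its fundamental cocircuit. That in turn means $x_i$ divides no generator of $J(\M)$, i.e. $J(\M)$ is extended from $\kk[x_j\mid j\neq i]$.

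The only mildly nontrivial step is the minimality and complement argument identifying cocircuits with complements of hyperplanes in (5). Everything else is a direct translation through the Stanley--Reisner correspondence.
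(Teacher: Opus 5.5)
The paper gives no proof of this proposition (it is recorded as a list of well-known facts), and your argument is correct: it reads everything off the decompositions $J(\M)=\bigcap_{B\in\B(\M)}\p_B$ and $I_\M=\bigcap_{B\in\B(\M)}\p_{B^*}$ together with the standard matroid dictionary, which is exactly the derivation the paper is alluding to. The one step stated too tersely is in (7). Your fundamental-cocircuit remark only shows that a non-loop lies in some cocircuit; the converse (a loop lies in no cocircuit) follows from minimality, because deleting an element that lies in no basis from a set meeting every basis leaves a set that still meets every basis.
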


This paper is concerned with the Stanley-Reisner ideal or the cover ideal of a matroid $\M$. With regards to matroids, these ideals are dual to each other, $J(\M\dual) = I_{\M}$ and $J(\M) = I_{\M\dual}$, hence in \cite{MN26} we made a single definition.
\begin{Definition}\label{Def-Matroidal}
A squarefree monomial ideal $I\subseteq R$ is 
\term{\Cmatroid} if $I$ satisfies one of the following equivalent conditions:
	\begin{enumerate}
		\item  $I$ is the Stanley--Reisner ideal of a matroid,
		\item  $I$ is the cover ideal of a matroid.
	\end{enumerate}
\end{Definition} 
The prefix ``$C$-" informs the reader that the elements in $G(I)$ satisfy the {\underline{C}ircuit} axioms of matroids, since there are (at least) two different notions of ``matroidal ideals" in the literature. (see also \cite[Def.~2.10]{MN26}.)

\begin{Remark} By the above duality, most statements for \Cmatroidal ideals only need to be proved either for the Stanley--Reisner ideal of a matroid or for the cover ideal of a matroid. For symbolic powers of ideals it is more convenient to work with cover ideals of matroids, so this will be our approach in this paper.
\end{Remark}

  \subsection{Symbolic powers of \Cmatroid ideals}
The ultimate goal of this paper and our sequel paper is to describe the minimal graded free resolutions of symbolic powers of \Cmatroidal ideals. Here we recall some properties of their symbolic powers. Recall that they are ``homologically nice" in the sense that they are Cohen--Macaulay. Recall that an ideal $I\subseteq R$ is {\em Cohen--Macaulay} if its local cohomology modules $H_{\m}^i(R/I)$ are all zero except for $i=\dim(R/I)$ (equivalently, if the projective dimension ${\rm pd}(R/I)$ equals $\h\,I$).
It was proven by Minh and Trung \cite{MT}, and also  independently by Varbaro  \cite{Var}, that a simplicial complex $\Delta$ is (the independence complex of) a matroid $\Llra$ $I_{\Delta}^{(\ell)}$ is Cohen--Macaulay for every $\ell\in \ZZ_+$. See \cite{TT} and \cite{LM} for strengthenings of this result. 

The following theorem in \cite{MN26}, describing the structure of all symbolic powers of \Cmatroid ideals, is crucial for our results.

\begin{Theorem}(Structure Theorem, \cite[Thm~3.7]{MN26})\label{MatroidSymPowerThm} Let $I$ be a \Cmatroid ideal. Then
			{\small \begin{center}
					$G(\symp{I}) = \bigg\{$
					\begin{tabular}{l|l} 
						& $M_i \in \sfp_{c_i}(I)$, where $1 \leq c_i \leq \h\, I$\\
						$M=M_1 \cdots M_s$ 	 &\\
						&  with $\sum c_i = \ell$ and $\supp{M_1} \supseteq \ldots\supseteq \supp{M_s}$
					\end{tabular}
					$\bigg\}$.
			\end{center}} 
		\end{Theorem}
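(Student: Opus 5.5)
We work with $I=J(\M)$ for a matroid $\M$ on $[n]$; this is enough by the duality $J(\M)=I_{\M\dual}$. Then $\symp{I}=\bigcap_{B\in\B(\M)}\p_B^\ell$ by \Cref{Basic-Matroid-Properties}(3). So a monomial $x^{a}$ lies in $\symp{I}$ if and only if $a(B):=\sum_{i\in B}a_i\geq \ell$ for every basis $B$, i.e. $a$ is an $\ell$-cover of $\M$. Likewise $x_S\in \sfp_c(I)$ if and only if $|S\cap B|\geq c$ for all $B\in\B(\M)$. For $S\subseteq[n]$ set $c(S):=\min_{B}|S\cap B|$. Note $c(S)\leq r(\M)=\h\,I$.

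I read the statement with each $M_i=x_{S_i}$ a minimal generator of $\sfp_{c_i}(I)$ and $c_i=c(S_i)$. Without such a normalization the right-hand side contains non-minimal elements, so this is the meaning I will prove.

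Any exponent vector decomposes canonically as $a=\sum_{j\geq1}\mathbf 1_{S_j}$ with $S_j=\{i\mid a_i\geq j\}$. The $S_j$ form a chain $S_1\supseteq S_2\supseteq\cdots$. Conversely, any product $M_1\cdots M_s$ with nested supports has exactly this decomposition. The key tool is a \emph{chain lemma}: for any chain $S_1\supseteq\cdots\supseteq S_s$ there is a single basis $B$ with $|S_j\cap B|=c(S_j)$ for all $j$ simultaneously. To prove it, note that $c(S)=r(\M)-r(S^*)$. Take a maximal independent subset of $S_1^*$, extend it to a maximal independent subset of $S_2^*\supseteq S_1^*$, and so on, finally extending to a basis $B$. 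Then $|B\cap S_j^*|=r(S_j^*)$ for all $j$, which gives the claim. This is the main technical step, and I expect it to be the only place where the matroid axioms (augmentation) are really used.

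With the chain lemma, let $x^a\in G(\symp{I})$ with chain $S_1\supseteq\cdots\supseteq S_t$, and put $c_j=c(S_j)$. Choosing $B$ as in the lemma gives $\ell\leq a(B)=\sum_j c_j$. Conversely, for every basis $B'$ we have $a(B')=\sum_j|S_j\cap B'|\geq\sum_j c_j$. Hence $\sum_j c_j$ is exactly the minimum of $a(B)$ over bases, and by minimality of $x^a$ it equals $\ell$: if it exceeded $\ell$, lowering the top coordinate would keep $a$ an $\ell$-cover.

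Each $x_{S_j}$ is a minimal generator of $\sfp_{c_j}(I)$. Indeed, suppose $x_{S_j\setminus i}\in\sfp_{c_j}(I)$ for some $i\in S_j$. Replace $S_j$ by $S_j\setminus i$ in the chain, choosing $i$ with $a_i=j$ if possible; nestedness of the chain requires care here. The resulting vector is still an $\ell$-cover by the inequality above, contradicting minimality. The precise choice of $i$ to preserve nestedness is a small bookkeeping point that I expect to handle via the level structure of $a$. This proves $\subseteq$.

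For $\supseteq$, take $M=M_1\cdots M_s$ as in the statement. Then $M\in\symp{I}$, since $a(B)=\sum|S_i\cap B|\geq\sum c_i=\ell$. For minimality, lower $a_i$ by one. If $a_i=j$, the chain changes only in that $S_j$ becomes $S_j\setminus i$, and it stays nested because $i\notin S_{j+1}$. By minimality of $M_j$ we get $c(S_j\setminus i)<c_j$. Applying the chain lemma to the new chain produces a basis on which the new vector has value $\sum_{k\neq j}c_k+c(S_j\setminus i)<\ell$. So the new vector is not an $\ell$-cover, and $M\in G(\symp{I})$.
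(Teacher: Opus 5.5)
Your proof is correct. This paper never proves the theorem: it imports it from \cite[Thm~3.7]{MN26} and uses it as a black box, so there is no in-paper argument to compare with.

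Your route is self-contained. It combines the identity $c(S)=r(\M)-r(S\dual)$ with a greedy basis adapted to the chain $S_1\dual\subseteq S_2\dual\subseteq\cdots$, and it uses matroid structure only in that chain lemma. It also has a by-product. For $x^a\in G(\symp{I})$, the focal bases of $\gamma_{x^a}$ are exactly the bases $B$ with $|B\cap S_j|=c_j$ for every $j$. This is the level-by-level picture behind the decomposition in \Cref{Critical-Matroid-Structure}, which the paper reaches via the exchange arguments of \Cref{Varbaro-Argument}.

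Your reading $M_i\in G(\sfp_{c_i}(I))$ is the one the paper intends (cf.\ \Cref{Def-SymbType}). The extra normalization $c_i=c(S_i)$ is automatic: if $x_S\in G(\sfp_c(I))$ with $S\neq\emptyset$ and $c(S)\geq c+1$, then $x_{S\setminus i}\in\sfp_c(I)$ for any $i\in S$, contradicting minimality.

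The step you defer as ``bookkeeping'' is not an obstacle, because nestedness plays no role there. The lower bound $\sum_k|T_k\cap B'|\geq\sum_k c(T_k)$ holds for \emph{any} family of sets $T_k$ whose indicator vectors sum to the exponent vector. So suppose $c(S_j\setminus i)\geq c_j$ for some $i\in S_j$. Write $a-e_i=\sum_{k\neq j}\mathbf 1_{S_k}+\mathbf 1_{S_j\setminus i}$. Then $(a-e_i)(B')\geq\sum_{k\neq j}c_k+c(S_j\setminus i)\geq\ell$ for every basis $B'$, contradicting minimality of $x^a$. Any such $i$ works, and the fact that the modified family is not a chain is harmless.

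Nestedness is needed only where you invoke the chain lemma:
\begin{enumerate}
\item to get $\sum_j c_j=\ell$;
\item in the $\supseteq$ direction, where taking $a_i=j$ keeps the modified chain nested automatically, as you observe.
\end{enumerate}
The same one-line argument also gives $c_j\geq 1$ for every $j$: if $c(S_j)=0$, any $i\in S_j$ could be removed. State this explicitly, since the theorem requires $1\leq c_i\leq\h\,I$.
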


Since, by the Structure \cref{MatroidSymPowerThm}, the squarefree parts of symbolic powers play an important role, we record some useful facts about them.

\begin{Corollary}(\cite[Cor~3.15]{MN26})\label{Corr-Matroid-LCM}\label{Matroid-LCM-Corollary-2}\label{Matroid-LCM-Corollary} 
Let $I$ be a \Cmatroid ideal. Then the squarefree part of $\symp{I}$ is
	{\small $$\sfp_\ell(I)=(\LCM(M_1,...,M_\ell) \, | \, M_1,...,M_\ell \in G(I), \textrm{ and } M_i \nmid \; \LCM(M_1,...,M_{i-1}) \text{ for } 2 \leq i \leq \ell),$$}
			and 		$G(\sfp_\ell(I))$ consists of the minimal elements with respect to divisibility of the displayed set. 
			
			In particular, if $M_i \in \sfp_{\ell_i}(I)$ for $i=1,2$ and $\supp{M_1} \neq \supp{M_2}$, then $\LCM(M_1,M_2) \in \sfp_{\max\{\ell_1,\ell_2\}+1}(I)$.

		\end{Corollary}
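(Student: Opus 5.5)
We work throughout with $I=J(\M)$ for a matroid $\M$ of rank $r$ on $E=[n]$. This is harmless because of the duality $J(\M\dual)=I_\M$. The plan is to translate everything into statements about flats of $\M$.

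\emph{Step 1 (membership of a squarefree monomial).} Since $I^{(\ell)}=\bigcap_{B\in\B(\M)}\p_B^\ell$ by \Cref{Basic-Matroid-Properties}(3), we have $x_F\in\sfp_\ell(I)$ if and only if $|F\cap B|\ge \ell$ for every basis $B$. The minimum of $|F\cap B|$ over bases is $r-r(F^*)$: take a basis of $F^*$ and extend it to a basis of $\M$. Hence
$$x_F\in \sfp_\ell(I)\iff r(F^*)\le r-\ell .$$
By \Cref{Basic-Matroid-Properties}(2),(5), the generators $M_i\in G(I)$ are the $x_{C_i}$ with $C_i$ a cocircuit, and $C_i^*=H_i$ is a hyperplane. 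Also $\LCM(M_1,\dots,M_i)=x_{F_i}$ where $F_i=C_1\cup\dots\cup C_i$, so that $F_i^*=H_1\cap\dots\cap H_i$, which is a flat.

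\emph{Step 2 ($\supseteq$).} The condition $M_i\nmid \LCM(M_1,\dots,M_{i-1})$ says $C_i\not\subseteq F_{i-1}$, equivalently $F_{i-1}^*\not\subseteq H_i$. Since $F_{i-1}^*$ is a flat not contained in the flat $H_i$, the intersection $F_i^*=F_{i-1}^*\cap H_i$ is a proper subflat of $F_{i-1}^*$. Therefore its rank drops by at least one at each step. Inductively $r(F_\ell^*)\le r-\ell$, and Step 1 gives $x_{F_\ell}\in\sfp_\ell(I)$.

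\emph{Step 3 ($\subseteq$).} Let $x_F\in\sfp_\ell(I)$, so $X=\mathrm{cl}(F^*)$ has rank at most $r-\ell$. Choose a flat $Y\supseteq X$ of rank exactly $r-\ell$, for instance the closure of a basis of $X$ enlarged by suitable elements. Extend a basis $\{b_1,\dots,b_{r-\ell}\}$ of $Y$ to a basis $\{b_1,\dots,b_r\}$ of $\M$. Set $H_j=\mathrm{cl}(\{b_1,\dots,b_r\}\setminus\{b_{r-\ell+j}\})$ for $j=1,\dots,\ell$. These are hyperplanes, and $H_1\cap\dots\cap H_j$ has rank exactly $r-j$; this follows from the standard rank computation using that the $b$'s are independent. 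In particular each $H_j$ fails to contain the previous intersection, which is exactly the nondivisibility condition. Moreover $H_1\cap\dots\cap H_\ell=Y$ by rank and containment. The cocircuits $C_j=H_j^*$ then have union $Y^*\subseteq X^*\subseteq F$, so $x_F$ is divisible by one of the displayed LCMs. The statement about $G(\sfp_\ell(I))$ follows at once, since a monomial ideal generated by a set is minimally generated by its divisibility-minimal elements.

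\emph{Step 4 (the ``in particular'').} For $M_i\in G(\sfp_{\ell_i}(I))$, Step 1 shows that $F_i^*$ is a flat of rank exactly $r-\ell_i$. Indeed, otherwise we could enlarge $F_i^*$ to its closure, or by one more element, and obtain a proper divisor of $M_i$ still in $\sfp_{\ell_i}(I)$. We have $\LCM(M_1,M_2)=x_{F_1\cup F_2}$, with complement the flat $F_1^*\cap F_2^*$. Suppose $\ell_1\ge\ell_2$ and that the supports are incomparable. Then $F_1^*\not\subseteq F_2^*$, so $F_1^*\cap F_2^*\subsetneq F_1^*$ is a proper subflat, of rank at most $r-\ell_1-1$, and Step 1 finishes the argument. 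The case $\ell_1<\ell_2$ is symmetric.

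The main obstacle is exactly this last point. If the supports are merely distinct but nested, say $F_2\subsetneq F_1$ with $\ell_1>\ell_2$, then $\LCM(M_1,M_2)=M_1$, which need not lie in $\sfp_{\ell_1+1}(I)$. So I would either read the hypothesis as incomparability of supports, or check carefully how it is used later. When $\ell_1=\ell_2$, equal ranks together with distinctness force incomparability, so that case is fine as stated. The only delicate routine check is the rank count for the hyperplanes in Step 3.
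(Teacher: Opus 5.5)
Your proof is correct. The paper gives no argument for this statement (it is quoted from \cite{MN26}), so there is no internal proof to compare against. Your route through the lattice of flats is self-contained.

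In the paper's language, your Step~1 is \Cref{Sqfree-parts}(1) unpacked: $x_F\in\sfp_\ell(J(\M))$ iff $F$ meets every independent set of size $r-\ell+1$ iff $r(F^*)\le r-\ell$. Steps~2--3 then reduce both inclusions to rank behaviour along chains of flats, with no appeal to the Structure \cref{MatroidSymPowerThm}.

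You can drop the rank count you flag as delicate in Step~3. Nondivisibility is witnessed directly by $b_{r-\ell+j}$: it lies in $H_1\cap\dots\cap H_{j-1}$ but not in $H_j$, by independence of the $b$'s. For divisibility you only need $Y\subseteq H_1\cap\dots\cap H_\ell$, which holds because each $H_j$ is a flat containing $b_1,\dots,b_{r-\ell}$. For $\ell>r$, both sides are $0$ by Steps~1--2.

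Your diagnosis of the ``in particular'' is right, and it can be sharpened in two ways.

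\emph{Nesting always fails.} If $M_1\in G(\sfp_{\ell_1}(I))$, then $r(F_1^*)=r-\ell_1$, so $M_1\notin\sfp_{\ell_1+1}(I)$. For example, take $I=J(U_{3,4})=(x_ix_j\mid 1\le i<j\le 4)$, $M_1=x_1x_2x_3\in G(\sfp_2(I))$ and $M_2=x_1x_2\in G(\sfp_1(I))$. Then $\LCM(M_1,M_2)=M_1\notin\sfp_3(I)=(x_1x_2x_3x_4)$.

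\emph{Minimality is also needed.} In Step~4 you silently replaced $M_i\in\sfp_{\ell_i}(I)$ by $M_i\in G(\sfp_{\ell_i}(I))$, and this strengthening is necessary. So even the equal-level case is not ``fine as stated''. Take $I=(x_4,\,x_1x_2x_3)=J(U_{1,3}\oplus U_{1,1})$. The elements $x_1x_4,\,x_2x_4\in\sfp_1(I)$ have incomparable supports, yet $x_1x_2x_4\notin\sfp_2(I)=(x_1x_2x_3x_4)$.

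The true statement is exactly what your Step~4 proves: $M_i\in G(\sfp_{\ell_i}(I))$, $\ell_1\ge\ell_2$, and $\supp{M_2}\not\subseteq\supp{M_1}$.

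Your caution about later use is also warranted. The proof of \Cref{Critical-Matroid-Cover-Ideal-Decomposition} uses the step $\LCM(N'_k,N_k)\in\sfp_{c_k+1}(J)$, and this can land in the nested case. For $J=J(U_{3,4})$, $N=x_1x_2x_3$ and $N'=x_1^3x_2^3$, one gets $k=1$ and $\LCM(N'_1,N_1)=N_1\notin\sfp_3(J)$.
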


	\begin{Remark}\label{Sqfree-parts}
		Let $J:=J(\Delta)$ for some pure simplicial complex $\Delta$. Then
		\begin{enumerate}
			\item $\sfp_a(J)$ is the cover ideal of the $(a-1)$-codimensional skeleton of $\Delta$. See e.g. \cite[Rem~3.18, Prop~3.19]{MN26}. In particular, if $\M$ is a matroid, then the skeletons of (the independence complex of) $\M$ coincide with the truncations of $\M$, hence $$\sfp_a(J(\M))=J(\M^{\skel{r(M)-a+1}}).$$
			\item Taking cover ideals of skeletons is ``additive" with a shift, that is $\sfp_{b}(\sfp_{a}(J)) = \sfp_{b+a-1}(J)$.
			\item If $b > \h\, J$, then $\sfp_b(J)$ is the cover ideal of the empty complex, hence $\sfp_b(J) = 0$. This is consistent with all results in this paper, so we need not assume $b \leq \h\, J$ when we consider $\sfp_b(J)$.
		\end{enumerate} 
	\end{Remark}

  \subsection{Standard Monomial Decomposition} For a \Cmatroid\,ideal $I$, the structure theorem states that the minimal generators of $\symp{I}$ have a ``tower" structure that resembles that of standard monomial theory. Throughout the paper, we manipulate these generators through their standard monomial decomposition. As such, here we collect, without proof, some elementary results from standard monomial theory.

\begin{Definition} For any monomial $M\in R$, we can decompose $M$ uniquely into a product $M = M_1\cdots M_s$, such that each $M_i$ is squarefree and such that $\supp{M_1} \sup ... \sup \supp{M_s}$. We say that $M_1\cdots M_s$ is the {\rm standard (monomial) form} 
	of $M$.
For convenience, we sometime allow $M_i = 1$, in which case the decomposition is unique only for the parts where $M_i \neq 1$.
\end{Definition}

\begin{Proposition}\label{SqFree-Decomp} Let $M$ and $N$ be monomials in $R$ with standard forms $M = M_1\cdots M_s$ and $N = N_1\cdots N_t$. By possibly setting some $M_i = 1$ or $N_i = 1$, we may assume $s = t$. Then 

\begin{enumerate}\label{SqFree-Decomp-Colon}
    \item The standard forms of $\LCM(M,N)$ and $\GCD(M,N)$ are $$\LCM(M,N) = \prod_{i=1}^{s}\LCM(M_i,N_i) \quad\;\; \GCD(M,N) = \prod_{i=1}^{s}\GCD(M_i,N_i).$$
    \item $N \,|\, M$ if and only if $N_i \,|\, M_i$ for all $1 \leq i \leq s$. In particular if $N \,|\, M$ then $(N/N_1) \, | \, (M/M_1)$.
    \item $N : M = \prod_{i=1}^{s} (N_i : M_i)$ -- note, however, that this may not be the standard form of $N : M$.
\end{enumerate}
    
\end{Proposition}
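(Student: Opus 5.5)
The whole proof rests on one translation, which I would set up first: a monomial's standard form is just the list of level sets of its exponent vector. Write $M=\prod_j x_j^{a_j}$. Then the standard form is forced to be
\[
M_i = x_{A_i}, \qquad A_i:=\{j\in[n] \,\mid\, a_j\geq i\}, \qquad i\ge 1 .
\]
These factors are squarefree, and their supports decrease because $A_1\supseteq A_2\supseteq\cdots$. Uniqueness holds because the exponent of $x_j$ in $M_1\cdots M_s$ equals $\#\{i \,\mid\, j\in \supp{M_i}\}$. With nested supports, that count determines the supports as exactly these level sets. Padding by $M_i=1$ simply means $A_i=\emptyset$ for $i>\max_j a_j$, so I may take $s=t$ as large as needed. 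Similarly write $N=\prod_j x_j^{b_j}$ with level sets $B_i$.

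Each of the three statements then becomes a pointwise statement about integers, which I would check one at a time.
\begin{enumerate}
\item For (1), $\LCM(M,N)$ has exponents $\max(a_j,b_j)$. Its $i$-th level set is $\{j \,\mid\, \max(a_j,b_j)\ge i\}=A_i\cup B_i$, and $x_{A_i\cup B_i}=\LCM(M_i,N_i)$. These level sets are again nested, so by the uniqueness above the product $\prod_i\LCM(M_i,N_i)$ is the standard form of $\LCM(M,N)$. The $\GCD$ case is identical, using $\min$ and $A_i\cap B_i$.
\item For (2), $N\mid M$ means $b_j\le a_j$ for all $j$. This holds if and only if $B_i\subseteq A_i$ for every $i$, i.e. $N_i\mid M_i$ for every $i$. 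The ``in particular'' follows by applying this to the indices $i\ge2$, since $M/M_1=M_2\cdots M_s$ and $N/N_1=N_2\cdots N_s$.
\item For (3), $N:M$ denotes the monomial $N/\GCD(M,N)$, with exponents $\max(b_j-a_j,0)$. On the other side, $N_i:M_i=x_{B_i\setminus A_i}$. The exponent of $x_j$ in $\prod_i x_{B_i\setminus A_i}$ is $\#\{i \,\mid\, a_j<i\le b_j\}=\max(b_j-a_j,0)$, so the two sides agree. The sets $B_i\setminus A_i$ need not be nested, which is why the paper notes this product need not be the standard form.
\end{enumerate}

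There is no real obstacle. The only step requiring care is the uniqueness claim, since parts (1) and (2) use it to identify the displayed products as \emph{the} standard forms. I would prove it first, by the counting argument above, and then the rest is bookkeeping.
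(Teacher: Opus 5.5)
Your proof is correct and complete. The paper states this proposition without proof, as elementary standard monomial theory, so there is no argument there to compare against.

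The key step is identifying the standard form of $M=\prod_j x_j^{a_j}$ with its level sets $A_i=\{j \mid a_j\ge i\}$. This identification is forced: nestedness makes $\{i \mid j\in\supp{M_i}\}$ an initial segment of length $a_j$, and it also forces any padding factors $M_i=1$ to come at the end. With that in place, (1)--(3) reduce to the pointwise identities for $\max$, $\min$, $\le$, and $\max(b_j-a_j,0)$ that you verify.

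Your reading of $N:M$ as the monomial $N/\GCD(M,N)$ matches how the paper later uses it, e.g.\ $\LCM(N'_k,N_k):N_k=N'_k:N_k$.
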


Recall that $(c_1,\ldots,c_s)\vdash \ell$ denotes a {\em partition} of $\ell\in \ZZ_+$, meaning that $c_1\geq c_2\geq \ldots \geq c_s\geq 1$ are integers with $\sum c_i=\ell$. 
\begin{Definition}\label{Def-SymbType}
			Let $I$ be a \Cmatroid ideal and $M\in G(\symp{I})$. Let $M = M_1\cdots M_s$ be the standard form of $M$, then Structure \cref{MatroidSymPowerThm} states that each $M_i$ is in $G(\sfp_{c_i}(I))$ for some partition $\underline{c}=(c_1,\ldots,c_s)\vdash \ell$ with $c_1 \leq \h\,I$. We call the partition $\underline{c}\vdash\ell$ the \term{symbolic type} of $M$. 
		\end{Definition}

As an easy consequence of the structure theorem, we derive a lemma allowing us to find minimal generators of $\symp{I}$, with a specific symbolic type, that divide a given monomial $M \in \symp{I}$.

\begin{Lemma}\label{Symbolic-Type-Divisibility} Let $I$ be a \Cmatroid\,ideal. Let $M$ be a monomial with standard form $M = M_1\cdots M_s$. Let $h_i=\max\{h\,\mid\,M_i\in I^{(h)}\}$ and $1\leq \ell\leq \sum h_i$. Choose any sub-partition (in the sense of Young tableaux) $\underline{c}=(c_1,\ldots,c_t)\vdash \ell$ of $(h_1,\ldots,h_s)$, i.e. $c_i \leq h_i$ for every $i$. 
Then, there exists $N \in G(\symp{I})$ with symbolic type $\underline{c}$ 
and standard form $N=N_1\cdots N_t$ where each $N_i \; |\; M_i$. 

\end{Lemma}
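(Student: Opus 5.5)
We build $N$ one layer at a time. For each index $i$ we find a squarefree monomial $N_i$ dividing $M_i$ with $N_i\in G(\sfp_{c_i}(I))$. The layers will be nested, $\supp{N_1}\supseteq\cdots\supseteq\supp{N_t}$. The Structure \cref{MatroidSymPowerThm} then shows that $N=N_1\cdots N_t$ is a minimal generator of $\symp{I}$ of symbolic type $\underline{c}$.

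Since $\underline{c}$ is a sub-partition of $(h_1,\ldots,h_s)$, we have $t\le s$ and $c_i\le h_i$ for each $i\le t$. The definition of $h_i$ gives $M_i\in I^{(h_i)}\subseteq I^{(c_i)}$. As $M_i$ is squarefree, $M_i\in\sfp_{c_i}(I)$. We also need $c_1\le \h\,I$, so that the type is admissible in \cref{MatroidSymPowerThm}. If $c_1>\h\,I$, then $\sfp_{c_1}(I)=0$ by \cref{Sqfree-parts}(3), contradicting $M_1\in\sfp_{c_1}(I)$. So $c_1\le\h\,I$ holds automatically.

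The layers are chosen from the bottom up. We pick $N_t\in G(\sfp_{c_t}(I))$ dividing $M_t$. Then for $i=t-1,\ldots,1$ we pick $N_i\in G(\sfp_{c_i}(I))$ with $N_i\mid M_i$ and $\supp{N_i}\supseteq\supp{N_{i+1}}$. Such an $N_i$ exists by the following step. The monomial $M_i$ lies in $\sfp_{c_i}(I)$, and $M_i$ is divisible by $M_{i+1}$, hence by $N_{i+1}$. Also $N_{i+1}\in I^{(c_{i+1})}$, and $c_{i+1}\le c_i$ only gives $I^{(c_i)}\subseteq I^{(c_{i+1})}$, which is the wrong direction. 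So we cannot simply take $N_i=N_{i+1}$. Instead we need a minimal generator of $\sfp_{c_i}(I)$ dividing $M_i$ whose support contains $\supp{N_{i+1}}$.

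This is the main obstacle. The first approach is to start from any $G\in G(\sfp_{c_i}(I))$ with $G\mid M_i$ and enlarge it. If $\supp{N_{i+1}}\not\subseteq\supp{G}$, take $L=\LCM(G,N_{i+1})$, which divides $M_i$ and lies in $\sfp_{c_i}(I)$. Then choose a minimal generator of $\sfp_{c_i}(I)$ dividing $L$ that contains $\supp{N_{i+1}}$.

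Such minimality may fail, so the cleaner route avoids nesting altogether. By \cref{MatroidSymPowerThm}, a product of elements $P_i\in\sfp_{c_i}(I)$ with nested supports lies in $\symp{I}$. So the monomial $P=\prod_{i}P_i$ with $P_i=M_i$ for $i\le t$ lies in $I^{(\ell)}$ and divides $M$. We then take a minimal generator $N\in G(\symp{I})$ dividing $P$, and it remains to control its symbolic type. Since $N\mid P$, \cref{SqFree-Decomp}(2) gives $N_i\mid M_i$ for $i\le t$ and $N_i=1$ for $i>t$. If the type of $N$ differs from $\underline{c}$, some layer $N_j$ lies in $\sfp_{d_j}(I)$ with $d_j\ne c_j$.

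In that case we perform an exchange. We replace $N_j$ by $\LCM(N_j,N_{j+1})$ or by a smaller generator, using \cref{Corr-Matroid-LCM}: the LCM of two generators with different supports climbs one symbolic level. This gives a monomial of strictly smaller degree than $N$, or one of the prescribed type, and induction on the degree of $P$ finishes the argument. I expect this exchange step, keeping the type fixed while preserving divisibility layer by layer, to be where the real matroid input (the circuit elimination encoded in \cref{Corr-Matroid-LCM}) is needed.
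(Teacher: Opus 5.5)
Your reduction is right. It suffices to find $N_i\in G(\sfp_{c_i}(I))$ with $N_i\mid M_i$ and $\supp{N_1}\supseteq\cdots\supseteq\supp{N_t}$, after which the Structure \cref{MatroidSymPowerThm} gives $N=N_1\cdots N_t\in G(\symp{I})$ of type $\underline{c}$. Your check that $c_1\le \h\,I$ is also fine.

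\textbf{The gap.} The one step with actual content is never proved: given $N_{i+1}$, produce $N_i\in G(\sfp_{c_i}(I))$ with $\supp{N_{i+1}}\subseteq \supp{N_i}\subseteq \supp{M_i}$. Passing to $\LCM(G,N_{i+1})$ only restates this, and you then abandon it. The fallback does not close the gap either:
\begin{itemize}
\item A minimal generator $N\mid M_1\cdots M_t$ of $\symp{I}$ has some type $\underline{d}\vdash\ell$ with $d_i\le h_i$, and nothing more can be said a priori.
\item Your exchange step is vacuous. In a standard form $\supp{N_{j+1}}\subseteq\supp{N_j}$, which forces $\LCM(N_j,N_{j+1})=N_j$.
\item \cref{Corr-Matroid-LCM} cannot make such a nested pair climb. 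Indeed $N_j\in G(\sfp_{d_j}(I))$ is not in $\sfp_{d_j+1}(I)$, since deleting one variable from a squarefree element of $\sfp_{d_j+1}(I)$ leaves an element of $\sfp_{d_j}(I)$.
\end{itemize}
No degree drops and there is no well-founded induction, so type $\underline{c}$ is never forced.

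\textbf{How to fill it.} Your bottom-up order is the right one; what is missing is a description of $G(\sfp_a(I))$ by flats.

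Write $I=J(\M)$ with $r=r(\M)$. By \cref{Sqfree-parts}(1), $\sfp_a(I)=J(\M^{[r-a+1]})$. So a squarefree $x_A$ lies in $\sfp_a(I)$ iff $A$ meets every independent set of size $r-a+1$, i.e.\ iff $r([n]-A)\le r-a$. Consequently
$G(\sfp_a(I))=\{x_{[n]-F}\,\mid\,F\text{ a flat of }\M\text{ with }r(F)=r-a\}$.

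Set $B_i=[n]-\supp{M_i}$. Then $B_1\subseteq\cdots\subseteq B_t$ and $r(B_i)=r-h_i\le r-c_i$.

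Put $F_{t+1}=[n]$ and $c_{t+1}=0$. For $i=t,\ldots,1$ you are given a flat $F_{i+1}\supseteq B_i$ of rank $r-c_{i+1}\ge r-c_i$. Extend a basis of $B_i$ to an independent set $Y\subseteq F_{i+1}$ with $|Y|=r-c_i$, and let $F_i=\mathrm{cl}(Y)$. Then $B_i\subseteq F_i\subseteq F_{i+1}$ and $r(F_i)=r-c_i$. Also $F_i\supseteq B_i\supseteq B_{i-1}$, so the recursion continues.

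The monomials $N_i=x_{[n]-F_i}$ lie in $G(\sfp_{c_i}(I))$, divide $M_i$, and have nested supports. Your appeal to the Structure Theorem then finishes the proof.
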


In fact, given any $M\in I^{(h)}$ and any integer $1\leq \ell \leq h$, one can employ the previous lemma to obtain all minimal monomial generators of $\symp{I}$ dividing $M$.

\subsection{Restriction, Contraction, and Colon Ideals}\label{Section-Colon-Ideals}

We collect a few results on how various matroid and ideal operations interact with each other. These statements are easy to check, so we will omit their proofs. 
\begin{Proposition}\label{Colon-Matroid-Structure} 
	Let $J=J(\M)$ be the cover ideal of a matroid $\M$, and let $N\notin J$ be a monomial. Then $J : N=J(\M\setminus \supp{N})$. In particular, if $J$ is any \Cmatroid ideal and $N\notin J$ is any monomial, then $J:N$ is \Cmatroid too. 
	 Furthermore, for any $\ell\geq 1$, $$\symp{(J : N)} = \symp{J} : N^\ell.$$
\end{Proposition}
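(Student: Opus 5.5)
Everything is computed directly from the primary decomposition $J=\bigcap_{B\in\B(\M)}\p_B$ (\Cref{Basic-Matroid-Properties}(3)) and the fact that colon commutes with finite intersections of monomial ideals. Write $S=\supp{N}$. For a prime $\p_B$ and a monomial $N$ we have $\p_B:N=R$ if $B\cap S\neq\emptyset$ and $\p_B:N=\p_B$ otherwise. Hence
$$J:N=\bigcap_{B\in\B(\M),\,B\cap S=\emptyset}\p_B .$$
Since $N\notin J$, some basis $B$ misses $S$. The bases of $\M$ disjoint from $S$ are exactly the subsets of $[n]-S$ that are bases of $\M$, and each such set is a maximal independent subset of $[n]-S$. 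Therefore $r(\M\setminus S)=r(\M)$, and these bases are precisely the bases of the deletion $\M\setminus S$. Thus $J:N=\bigcap_{B\in\B(\M\setminus S)}\p_B=J(\M\setminus S)$, where we regard $\M\setminus S$ as a matroid on $[n]-S$ and extend the ideal to $R$; by \Cref{Basic-Matroid-Properties}(7) this is harmless. The only delicate point is the rank comparison, and the hypothesis $N\notin J$ is exactly what gives it.

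For the general \Cmatroid statement, write $J=J(\M)$ using \Cref{Def-Matroidal}. The first part then shows $J:N$ is the cover ideal of the matroid $\M\setminus S$, so it is \Cmatroid.

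For symbolic powers, both ideals are computed from the same kind of decomposition. We have $\symp{J}=\bigcap_{B\in\B(\M)}\p_B^{\ell}$, and by the identification above $\symp{(J:N)}=\bigcap_{B\cap S=\emptyset}\p_B^\ell$. Colon again distributes over the intersection, so it suffices to compute $\p_B^\ell:N^\ell$ for each $B$.

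If $B\cap S=\emptyset$, the variables of $N$ do not occur among the generators of $\p_B$, so $N$ is a nonzerodivisor modulo the $\p_B$-primary ideal $\p_B^\ell$, and $\p_B^\ell:N^\ell=\p_B^\ell$. If $B\cap S\neq\emptyset$, pick $i\in B\cap S$. Then $x_i^\ell$ divides $N^\ell$ and lies in $\p_B^\ell$, so $N^\ell\in\p_B^\ell$ and $\p_B^\ell:N^\ell=R$.

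Intersecting over all bases gives $\symp{J}:N^\ell=\bigcap_{B\cap S=\emptyset}\p_B^\ell=\symp{(J:N)}$. None of these steps is a genuine obstacle. The main care needed is the deletion and rank bookkeeping in the first part, and noticing that the exponent $\ell$ on $N$ is what forces $\p_B^\ell:N^\ell=R$ whenever $B\cap S\neq\emptyset$; with $N$ in place of $N^\ell$ the colon would not be all of $R$.
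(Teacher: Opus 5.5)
Your proof is correct. The paper omits this proof as an easy check, and your argument is exactly that check: colon termwise against $J=\bigcap_{B\in\B(\M)}\p_B$ and $\symp{J}=\bigcap_{B}\p_B^{\ell}$, and use $N\notin J$ to get $r(\M\setminus\supp{N})=r(\M)$ (the same computation underlies \cref{Ideal-Restriction-Structure}(5)).
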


\begin{Definition}\label{Def-Restriction} For any ideal $J\subseteq R=\kk[x_1,\ldots,x_n]$ and $A\subseteq [n]$, let $\kk[A]:=\kk[x_i\,\mid\,i\in A]$. Then the \term{restriction of $J$ to $A$} is $J|_A := J \cap \kk[A]$. 
    
\end{Definition}

We record the following facts about restrictions for future uses.

\begin{Remark}\label{Ideal-Restriction-Structure}
	Let $J\subseteq R=\kk[x_1,\ldots,x_n]$ be an ideal and $A\subseteq [n]$.
\begin{enumerate}

	\item  If $J=\q_1\cap \cdots \cap \q_t$ (e.g. the $\q_i$'s form a primary decomposition of $J$), then $J|_A=\bigcap \left(\q_i|_A\right)$. 
	
		\item 	If $J$ is monomial, then $J|_A$ is monomial and $J|_A=(N\in G(J)\,\mid\, \supp{N}\subseteq A)$.
		
		\item In particular, if $J=J(\Delta)$ for some simplicial complex $\Delta$, then $$\symp{(J|_A)} = \symp{J}|_A\qquad \qquad \text{ and }\qquad \qquad \sfp_\ell(J|_A) = \sfp_\ell(J)|_A.$$ 

	\item By (2) and \cref{Basic-Matroid-Properties} (1), $I_{\M|_A}=(I_\M)|_A$ holds for any matroid $\M$.

	 \item On the other hand, for any matroid $\M$, write $s:=\max\{|F\cap A|\,\mid\,F\in \B(\M)\}$ and $J:=J(\M)=\bigcap_{F\in \B(\M)} \p_F$. Then  $\h(J(\M|_A))=s$, and $$J(\M|_A)=\bigcap_{F\in \B(\M),|F\cap A|=s} \left(\p_F|_A\right)=\bigcap_{F\in \B(\M),|F\cap A|=s} \p_{(F\cap A)}.$$
	 In particular, if there is some basis $F\in \B(\M)$ with $F\subseteq A$, then it is also true that $$J(\M|_A)R=J(\M):_Rx_{A{\dual}}.$$

\item For any matroid $\M$, by duality, if $A\in \mc{I}(M)$ is an independent set, then 
$$J(\M/A)=
I_{M{\dual}|_{A{\dual}}}=\left(I_{M{\dual}}\right)|_{A{\dual}}=J(\M)|_{A{\dual}}.$$

\end{enumerate}	

\end{Remark}

\begin{Remark}\label{Rmk-2.15}
In contrast to $(4)$ above, the restriction of the cover ideal of a matroid is {\bf not} necessarily the cover ideal of the restriction.
In general, we only know $J(\M)|_A \subseteq J(\M|_A)$ and the containment can be strict -- see \Cref{examples-rmk}(1). 

However, equality holds true in the following special case that we will use, for instance, in the proof of \Cref{Critical-Matroid-Cover-Ideal-Decomposition}: If $\M=\M_1\oplus \M_2$ and $A$ is the vertex set of $\M_1$ then $J(\M_1)|_{A} = J(\M_1|_{A})$.  

Also, in general, restrictions and colons do not commute. For any ideal $N$, the containment\\ $J|_AR :_R N \subseteq (J :_R N)|_AR$ is true in general, but it can be strict. See \Cref{examples-rmk}(2).
\end{Remark}	

\begin{Example}\label{examples-rmk} 
We collect here a few concrete examples.\\
$($1$)$ let $J=J(U_{2,4})=(x_ix_hx_j\,\mid\,1\leq i < h<j\leq 4)$ and $A=\{1,2\}$. Then $J|_A=(0)\subsetneq J(U_{2,4}|_{A})=J(U_{2,2})=(x_1,x_2)\subseteq k[x_1,x_2]$. Nevertheless, $J(\M|_A)$ is \Cmatroid because $J(\M)|_{A}=\left(I_{M{\dual}}\right)|_{A}=I_{M{\dual}|_{A}}$, and it has $\h(J(\M)|_{A})=\min\{|F\cap A|\,\mid\,F\in \B(\M)\}$.\\
\\
$($2$)$ Let $J = J(U_{2,3})=(x_1x_2,x_1x_3,x_2x_3)\subseteq \kk[x_1,x_2,x_3]=R$, $A = \{x_1,x_2\}$ and $N = x_3$. Then $J|_AR : x_3 = (x_1x_2) : x_3  = (x_1x_2)$, but $(J : x_3)|_AR = (x_1,x_2)|_AR = (x_1,x_2)$.\\
\\
$($3$)$ As an example to illustrate $(6)$ above, consider the \Cmatroid ideal $J = (ad,ace,abe,bc,bde,cde)$. It is the cover ideal of $\M = \{\{a,b,c\}, \{a,b,d\}, \{a,b,e\},\{a,c,d\},\{a,c,e\},\{b,c,d\},\{b,d,e\},\{c,d,e\}\}$. Then $J|_{\{b,c,d,e\}} = (bc,bde,cde)$ is indeed the cover ideal of $\M/a = \{ \{b,c\},\{b,d\},\{b,e\},\{c,d\},\{c,e\}\}$.
   
\end{Example}

We isolate the following simple but useful result.
\begin{Lemma}\label{Sym-Powers-Contraction} 
Let $\M$ be a matroid and $A\in \mc{I}(M)$, then  $$G(\symp{J(\M\con A)}) =\{N\in G(\symp{J(\M)}))\,\mid\, \supp{N}\subseteq A\dual\} 
 \subseteq G(\symp{J(\M)})\quad  \text{ for all }\ell\geq 1.$$
\end{Lemma}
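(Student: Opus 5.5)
The plan is to reduce the statement to two facts already recorded: the identification $J(\M\con A)=J(\M)|_{A\dual}$ from \cref{Ideal-Restriction-Structure}(6), valid because $A\in\mc{I}(\M)$, and the compatibility of symbolic powers with restriction for cover ideals of simplicial complexes from \cref{Ideal-Restriction-Structure}(3). Writing $J:=J(\M)$, these combine to give
$$\symp{J(\M\con A)}=\symp{\left(J|_{A\dual}\right)}=\symp{J}|_{A\dual}.$$
Here $J(\M\con A)$ is regarded as the cover ideal of the simplicial complex $\M\con A$ on the vertex set $A\dual$. If one prefers to stay in $R$, observe that its extension to $R$ is $J|_{A\dual}R$, and its symbolic powers are computed by the same intersection of primes $\p_F^\ell$ with $F\subseteq A\dual$.

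Next I apply \cref{Ideal-Restriction-Structure}(2) to the monomial ideal $\symp{J}$ and the set $A\dual$. It says that $\symp{J}|_{A\dual}$ is generated by those $N\in G(\symp{J})$ with $\supp{N}\subseteq A\dual$. To conclude that these elements are exactly the minimal generators, I check two things. First, they are pairwise incomparable under divisibility, since they are already minimal generators of the larger ideal $\symp{J}$. Second, any monomial of $\symp{J}|_{A\dual}$ lies in $\symp{J}$, so it is divisible by some $N\in G(\symp{J})$. Because that monomial involves only variables indexed by $A\dual$, the divisor $N$ does too, so $N$ belongs to the displayed set. Together these give $G(\symp{J}|_{A\dual})=\{N\in G(\symp{J})\mid \supp{N}\subseteq A\dual\}$, which proves the equality, and the containment in $G(\symp{J(\M)})$ is then immediate.

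The only point needing care, and the main obstacle, is the hypothesis under which \cref{Ideal-Restriction-Structure}(3) holds. I would verify it directly from primary decompositions rather than rely on it blindly. By \cref{Ideal-Restriction-Structure}(1), $\symp{J}|_{A\dual}=\bigcap_{F\in\B(\M)}(\p_F^\ell)|_{A\dual}$. Each $(\p_F^\ell)|_{A\dual}$ is generated by the monomials of $\p_F^\ell$ in the variables of $A\dual$. When $F\not\subseteq A\dual$, this is the ideal $\p_{F\cap A\dual}^{\ell-|F\cap A|}$ in $\kk[A\dual]$, with the convention that a nonpositive exponent gives the unit ideal. Since $A$ is independent, the bases of $\M\con A$ are exactly the sets $F\setminus A$ for bases $F\supseteq A$. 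The task is therefore to show that the terms coming from bases $F\not\supseteq A$ are redundant in the intersection.

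For this I would use basis exchange: extend $A\cap F$ using elements of $A$ to a basis $F'\supseteq A$ with $F'\setminus A\subseteq F\setminus A$. Then $F'\setminus A\subseteq F\cap A\dual$, and the exponents satisfy $\ell-|F'\cap A|\cdot 0$ appropriately, so that $\p_{F'\setminus A}^{\ell}$ is contained in the term attached to $F$. This shows that the intersection equals $\bigcap_{F'\supseteq A}\p_{F'\setminus A}^{\ell}$, which is $\symp{J(\M\con A)}$.
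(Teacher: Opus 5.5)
Your first two paragraphs are exactly the paper's proof: \cref{Ideal-Restriction-Structure}(6) followed by (3) gives $\symp{J(\M\con A)}=\symp{J}|_{A\dual}$, and (2) then identifies the minimal generators. Your check that the generators of $\symp{J}$ supported in $A\dual$ are precisely $G(\symp{J}|_{A\dual})$ only spells out what the paper calls immediate. That part is complete.

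The supplementary verification of (3) in your last two paragraphs is wrong as written, because restriction to $\kk[A\dual]$ does not lower the exponent. A monomial $N$ in the variables indexed by $A\dual$ satisfies $\gamma_N(F)=\gamma_N(F\cap A\dual)$. Hence $(\p_F^\ell)|_{A\dual}=\p_{F\cap A\dual}^{\ell}$, not $\p_{F\cap A\dual}^{\ell-|F\cap A|}$; the latter is $(\p_F^\ell:x_A)|_{A\dual}$. For example, with $F=\{1,2\}$, $A=\{1\}$ and $\ell=1$, your formula gives the unit ideal, whereas the restriction is $(x_2)$. Taken literally, your formula attaches $\p_{F'\setminus A}^{\ell-|A|}$ to every basis $F'\supseteq A$. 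The intersection would then come out as $J(\M\con A)^{(\ell-|A|)}$, contradicting the conclusion you state.

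There is a second slip: $F'$ should be obtained by extending $A$ with elements of $F$, not by extending $A\cap F$ with elements of $A$. This extension exists because $A$ is independent and $F$ spans $\M$.

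With both corrections the argument goes through. From $F'\setminus A\subseteq F\cap A\dual$ you get $\p_{F'\setminus A}^{\ell}\subseteq\p_{F\cap A\dual}^{\ell}$. So every component indexed by a basis $F\not\supseteq A$ is redundant, and the intersection equals $\bigcap_{F'\supseteq A}\p_{F'\setminus A}^{\ell}=\symp{J(\M\con A)}$. Once repaired, this paragraph is a self-contained proof of $\symp{J(\M\con A)}=\symp{J}|_{A\dual}$ that bypasses (6) and (3). Since your main line already cites (3), as the paper does, the verification is optional.
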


\begin{proof}

First, by \cref{Ideal-Restriction-Structure}$(6)$ followed by $(3)$,  $\symp{J(\M\con A)} =  \symp{(J(\M)|_{A\dual})} = \symp{J(\M)}|_{A\dual}\subseteq \symp{J(\M)}$. The result is now immediate from \cref{Ideal-Restriction-Structure}$(2)$\end{proof}
	
Let $v$ be an indepedent vertex of a matroid $\M$. Let $J:=J(\M)$ and $J\con v:=J(\M\con v)$. (see also \cref{Def-Iterated-Contraction}). From the above, we have the  partition $G(J)=G(J\con v) \sqcup [G(J)-G(J\con v)]$, which will be very relevant for our work, so we reserve a special notation for it.

\begin{Notation}\label{Notation-Div}
	Let $\M$ be a matroid, $J=J(\M)$, and let $v$ be an independent vertex of $\M$. Then, we set $J \div v $ to be the ideal generated by $G(J) - G(J\con v)$.
\end{Notation}

We record here the following immediate consequence of Lemma \ref{Sym-Powers-Contraction}. 

\begin{Corollary}\label{Partition-By-Contraction} 
Adopt \Cref{Notation-Div}. Then 
	$$
	G(J \con v) = \{ N \in G(J) : x_v \nmid N \}\qquad \text{ and }\qquad G(J \div v) = \{ N \in G(J) : x_v \; | \; N \}.$$ Furthermore, $G(J \con A) = \{ N \in G(J) : \; \supp{N} \cap A = \emptyset\} $, for any $A\in \mathcal I(\M)$.
\end{Corollary}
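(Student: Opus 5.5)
The plan is to apply \Cref{Sym-Powers-Contraction} with $\ell=1$, first for $A=\{v\}$ and then for an arbitrary independent set $A$.

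Since $v$ is an independent vertex of $\M$, the set $A=\{v\}$ lies in $\mathcal I(\M)$. Because $J$ is squarefree, $\symp{J}$ with $\ell=1$ is just $J$. The lemma then gives
$$G(J\con v)=G(J(\M\con v))=\{N\in G(J)\,\mid\,\supp{N}\subseteq \{v\}\dual\}.$$
For a monomial $N$, the condition $\supp{N}\subseteq [n]-\{v\}$ holds exactly when $x_v\nmid N$. This proves the first equality.

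For the second equality, recall that by \Cref{Notation-Div} the ideal $J\div v$ is generated by the set $G(J)-G(J\con v)$. This set consists of monomials, and none of them divides another, since they are all minimal generators of $J$. A generating set of that kind is the minimal monomial generating set of the ideal it generates, so
$$G(J\div v)=G(J)-G(J\con v)=\{N\in G(J)\,\mid\,x_v\mid N\}.$$
The last step uses the first equality.

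For the final statement, let $A$ be any independent set of $\M$. We apply \Cref{Sym-Powers-Contraction} directly with $\ell=1$. This gives $G(J\con A)=\{N\in G(J)\,\mid\,\supp{N}\subseteq A\dual\}$, and the condition $\supp{N}\subseteq A\dual$ is the same as $\supp{N}\cap A=\emptyset$.

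There is no real obstacle here. The only point that needs care is that $G(J\div v)$ is the whole complementary set $G(J)-G(J\con v)$, and not some smaller subset of it; this is exactly the antichain observation used in the second step.
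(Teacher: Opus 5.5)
Your proposal is correct and follows the paper's route: the paper records this statement as an immediate consequence of \Cref{Sym-Powers-Contraction}, and you apply that lemma with $\ell=1$ (using $J^{(1)}=J$ for squarefree $J$), first to $A=\{v\}$ and then to an arbitrary independent $A$. Your antichain remark, which shows that the minimal generating set of $J\div v$ is exactly $G(J)-G(J\con v)$, is the one detail the paper leaves implicit, and you handle it correctly.
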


\section{\cmname matroids and their structure}\label{Sec-3}
In this section we introduce a notion which will be used to provide a combinatorial interpretation for the colon ideals appearing in the resolution by iterated mapping cones for symbolic powers of \Cmatroid ideals.
As \cmname matroids are combinatorial in nature, to describe them and their structure, we give priority to the language of $\ell$-covers, which we recall below. For results about ideals, including cover ideals of \cmname matroids, we will revert back to the language of monomial theory.

Let $\gamma : [n] \to \NN_0$, 
its {\em support } is $\supp{\gamma} := \{ i : \gamma(i) > 0\}$. Let $\Delta$ be a simplicial complex. For any $F \in \Delta$, we set $\gamma(F) := \sum_{i \in F} \gamma(i)$. For an integer $\ell$, we say that $\gamma$ is a {\em $\ell$-cover} of $\Delta$ if for all facets $F \in \facet(\Delta)$ $\gamma(F) \geq \ell$. 

We define a partial ordering on covers by comparing them pointwise, i.e. $\gamma \leq \delta$ if $\gamma(i) \leq \delta(i)$ for all $i \in [n]$. We write $\gamma < \delta$ if $\gamma \leq \delta$ and $\gamma \neq \delta$. A {\em basic} $\ell$-cover is a $\ell$-cover that is minimal with respect to this ordering. 

To any monomial $N=\prod_{i\in[n]} x_i^{a_i}$, we can bijectively associate a function $\gamma_N:[n]\lra \NN_0$ defined as $\gamma_N(i) = a_i$ for all $i$. Conversely, given $\gamma:[n]\lra \NN_0$, we set $N_\gamma=\prod_{i\in [n]}x_i^{\gamma(i)}$. 
The {\em order} of $\gamma$ at a face $F \in \Delta$ is $\gamma(F) = \sum_{i\in F} \gamma(i)$. Note that  the order of $\gamma$ is the same as the order of $N_\gamma$ in the $\p_F$-adic topology, i.e  $\max\{ h \mid N_{\gamma} \in \p_F^{h} \}$. 

One can check the following facts:
\begin{Remark}\label{Sufficient-Basic-Cover}
Let $N$ be a monomial, $\Delta$ a simplicial complex, and $J:=J(\Delta)$. \begin{enumerate}
	\item  $N\in \symp{J}$ $\Llra$ $\gamma_N$ is a $\ell$-cover of $\Delta$ $\Llra$ $\gamma_N(F)\geq \ell$ for all $F\in \F(\Delta)$;
	\item $N\in G(\symp{J})$ $\Llra$ $\gamma_N$ is a basic $\ell$-cover of $\Delta$  $\Llra$ $\gamma_N$ is a $\ell$-cover of $\Delta$ and $\forall\,i \in \supp{\gamma_N}$ there exists $F \in \facet(\Delta)$ with $ i \in F$ and $\gamma_N(F) = \ell$.
\end{enumerate}
\end{Remark}

We now present a fundamental notion for this paper.
\begin{Definition}\label{Def-FocalMatroid} Let $\gamma$ be a $\ell$-cover of a simplicial complex $\Delta$. A facet $F\in \facet(\Delta)$ is a \term{\cmname facet} of $\gamma$ or, equivalently, $\p_F$ is a \term{\cmname prime} of $\gamma$, if $\gamma(F) = \ell$. 
The \term{\cmname complex} of $\gamma$ is the subcomplex $$\Delta(\gamma) := \langle \{ F \in \facet(\Delta) \,:\, \gamma(F) = \ell\} \rangle.$$ 
When $\Delta = \M$ is a matroid we will show later that $\cm{\gamma}$ is a submatroid of $\M$ (see \cref{Critical-Matroid}), in which case we call $\cm{\gamma}$ the \term{\cmname matroid} of $\gamma$ (in $\M$) and $F$ a \term{\cmname basis} of $\gamma$ (in $\M$).

\end{Definition}

In the remaining results of this section, we describe the structure of the \cmname complex for matroids. For instance, \Cref{Varbaro-Argument} provides useful restrictions to the exchanges one can make starting from a focal basis $F$. Note that \Cref{Varbaro-Argument}(4) is already present in the proof of \cite[Thm~2.1]{Var}.

\begin{Lemma}\label{Varbaro-Argument}
Let $\M$ be a matroid and $\gamma$ an $\ell$-cover of $\M$. Let $F$ be a \cmname basis of $\gamma$ and let $G\in \B(\M)$, then we have the following statements about the exchange properties of $\M$. 
\begin{enumerate}
  \item From the (multibasis) exchange property, for any $A \subseteq F - G$ there is a $B \subseteq G - F$ such that $(F-A)\cup B \in \B(\M)$. Then, for any such $B$, we have $\gamma(A) \leq \gamma(B)$, and if $A \subseteq \supp \gamma$ then $B \subseteq \supp \gamma$.

    \item From the bijective basis exchange property \cite[Thm 1]{Brualdi1969}, there is a bijection $\sigma : (F-G) \to (G-F)$ such that for every $i \in F-G$, $(F-i)\cup \sigma(i)\in\B(\M)$. Then for all $i \in F -G$, $\gamma(i)\leq \gamma(\sigma(i))$. 
  
\end{enumerate}
Now, suppose further that $G$ is a \cmname basis too. 

\begin{enumerate}
    \item[(3)] In $(1)$ above, assume in addition that symmetrically $(G-B)\cup A$ is a basis. Then $\gamma(A) = \gamma(B)$ and $ A \subseteq \supp \gamma$ if and only if $B \subseteq \supp \gamma$.
    \item[(4)] With $\sigma$ as in $(2)$ above, we also have $\gamma(i) = \gamma(\sigma(i))$.
\end{enumerate}

\end{Lemma}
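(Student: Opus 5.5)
The whole argument rests on one inequality: $\gamma$ is an $\ell$-cover of $\M$, while a \cmname basis attains the minimum value $\gamma(F)=\ell$ among all bases. Every exchange that produces a new basis therefore cannot lower the value of $\gamma$. I will prove (1) and (2) directly from this, and then get (3) and (4) by applying (1) and (2) twice, once with $F$ as the focal basis and once with $G$.

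\emph{Proof of (1).} Let $A\subseteq F-G$, and let $B\subseteq G-F$ be such that $F':=(F-A)\cup B\in\B(\M)$. Since $A\subseteq F$ and $B\cap F=\emptyset$, we have
$$\gamma(F')=\gamma(F)-\gamma(A)+\gamma(B)=\ell-\gamma(A)+\gamma(B).$$
Because $\gamma$ is an $\ell$-cover, $\gamma(F')\geq \ell$, which gives $\gamma(A)\leq\gamma(B)$.

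For the support statement I will not use the totals; instead I will argue elementwise. Assume $A\subseteq\supp\gamma$. All bases have the same size, so $|B|=|A|$. The plan is to pick a single $b\in B$ and exchange it against some $a\in A$ in the smaller setting. Concretely, I would apply the symmetric single-element exchange between the bases $F'$ and $F$ to find $a\in A$ with $(F-a)\cup b\in\B(\M)$. Applying the inequality just proved to the one-element sets $\{a\}$ and $\{b\}$ then gives $0<\gamma(a)\leq\gamma(b)$, so $b\in\supp\gamma$.

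This single-element step is where I expect the difficulty. The ordinary exchange axiom applied to $b\in F'-F$ gives some $a\in F-F'=A$ with $(F'-b)\cup a\in\B(\M)$, which is the wrong direction. What I need is the other direction, $(F-a)\cup b\in\B(\M)$. For this I would use the fundamental circuit of $b$ in $F$: it is the unique circuit contained in $F\cup b$, so it is not contained in $F'$, and hence it meets $A$. Any element $a$ of that circuit lying in $A$ satisfies $(F-a)\cup b\in\B(\M)$.

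\emph{Proof of (2).} Take the bijection $\sigma$ supplied by Brualdi's theorem. Applying (1) with $A=\{i\}$ and $B=\{\sigma(i)\}$ gives $\gamma(i)\leq\gamma(\sigma(i))$ for every $i\in F-G$.

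\emph{Proof of (3).} Now $G$ is also a \cmname basis, and $(G-B)\cup A$ is a basis. Applying (1) with the roles of $F$ and $G$ swapped (and $A$, $B$ swapped) gives $\gamma(B)\leq\gamma(A)$, so $\gamma(A)=\gamma(B)$. The same swapped application of the support part of (1) shows that $B\subseteq\supp\gamma$ implies $A\subseteq\supp\gamma$; together with (1) this gives the equivalence.

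\emph{Proof of (4).} Summing the inequalities of (2) over the bijection $\sigma$,
$$\sum_{i\in F-G}\gamma(i)\;\leq\;\sum_{i\in F-G}\gamma(\sigma(i))=\gamma(G-F).$$
Since $F$ and $G$ are both focal, $\gamma(F)=\gamma(G)=\ell$, and subtracting $\gamma(F\cap G)$ from both gives $\gamma(F-G)=\gamma(G-F)$. The displayed sum is therefore an equality, and since each term satisfies $\gamma(i)\leq\gamma(\sigma(i))$, every term must be an equality: $\gamma(i)=\gamma(\sigma(i))$ for all $i\in F-G$.
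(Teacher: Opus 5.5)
Your proof is correct. For the inequality in (1), and for (2), (3) and (4), it follows the paper's proof: the computation $\ell\le\gamma(F')=\ell-\gamma(A)+\gamma(B)$, the application of (1) with $F,G$ and $A,B$ swapped for (3), and the summation over $\sigma$ using $\gamma(F-G)=\gamma(G-F)$ for (4).

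The one genuine difference is the support claim in (1).
\begin{itemize}
\item The paper first proves the inequality in (2). It then applies (2) to the pair of bases $F$ and $F'=(F-A)\cup B$. This gives a bijection $\sigma\colon A\to B$ with $\gamma(a)\le\gamma(\sigma(a))$, so every $b\in B$ satisfies $\gamma(b)\ge\gamma(\sigma^{-1}(b))>0$.
\item You instead use the fundamental circuit of $b$ in $F$. Since $F'$ is independent and contains $b$, that circuit must meet $(F\cup b)-F'=A$. This gives some $a\in A$ with $(F-a)\cup b$ a basis, and hence $0<\gamma(a)\le\gamma(b)$.
\end{itemize}
Your route needs only an assignment $b\mapsto a$, not necessarily injective. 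It is fully elementary, so (1) no longer depends on Brualdi's theorem. The paper's route is shorter on the page because it reuses (2).

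One small wording point: the circuit fails to lie in $F'$ because $F'$ is independent, not because the circuit is the unique one in $F\cup b$.
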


\begin{proof} We prove the inequalities in $(1)$ and $(2)$ first.
Let $F' := (F-A) \cup B$. Since $F \cap B=\emptyset$, and $F'$ is a basis, the inequality in $(1)$ follows from 
$$\ell \leq \gamma(F') = \gamma(F) - \gamma(A) + \gamma(B) = \ell - \gamma(A) + \gamma(B).$$
$(2)$ follows similarly, just replace $A$ and $B$ in the expression above with $\{i\}$ and $\{\sigma(i)\}$, respectively. 

For the statement about supports in $(1)$, notice that $F - F' = A$ and $F' - F = B$, so by $(2)$ applied to $F$ and $F'$ there is a bijection $\sigma:A \to B$ such that for all $a \in A$, $\gamma(a) \leq \gamma(\sigma(a))$. Hence assuming $A \subseteq \supp \gamma$, for any $b \in B$, $\gamma(b) \geq \gamma(\sigma^{-1}(b))>0$. Thus $B \subseteq \supp \gamma$.

$(3)$ follows from a symmetric argument using $(1)$. 

$(4)$ By  $(2)$, we know that for every $i \in F -G$, $\gamma(\sigma(i)) \geq \gamma(i)$. Since $\gamma(F) = \gamma(G)$, we also have $\sum_{i\in F-G}\gamma(\sigma(i))= \sum_{i\in F-G}\gamma(i) $. Hence the term-wise inequality implies $\gamma(\sigma(i)) = \gamma(i)$. 
\end{proof}

Note that the lemma can be used for any $\ell$-cover $\gamma$. If $\gamma$ is basic, by \cref{Sufficient-Basic-Cover}, a focal basis is guaranteed to exist, while if $\gamma$ is not basic, a focal basis may not exist.

We record the following result about a covering property of focal facets of $\Delta$ for future use.
\begin{Proposition}\label{Critical-Primes-Cover} 
	Let $\gamma$ be a basic $\ell$-cover of a simplicial complex $\Delta$. Then,
	\begin{enumerate}
		\item $\supp{\gamma} \sub \cup_{F \in \Delta{(\gamma)}} F$;
		\item if $\Delta$ is either a matroid or a graph, then $\{i\in [n]\,\mid\,\{i\}\in \Delta\}
		\sub \cup_{F \in \Delta(\gamma)} F$. 
	\end{enumerate} 
\end{Proposition}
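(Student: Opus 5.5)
Part (1) follows directly from \cref{Sufficient-Basic-Cover}(2). Since $\gamma=\gamma_N$ is a basic $\ell$-cover, every $i\in\supp{\gamma}$ lies in some facet $F$ with $\gamma(F)=\ell$. Such an $F$ is a focal facet, so $F\in\Delta(\gamma)$ and $i\in\bigcup_{F\in\Delta(\gamma)}F$.

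For (2), let $\{i\}\in\Delta$ be a vertex. By (1) we may assume $\gamma(i)=0$. We must produce a focal facet containing $i$.

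\emph{Matroid case.} The plan is an exchange argument. Since $\gamma$ is basic, a focal basis $F$ exists (nonempty $\supp{\gamma}$ plus (1); if $\gamma=0$ then $\ell\le 0$ and every basis is focal). Suppose $i\notin F$. Because $\{i\}$ is independent, $F\cup\{i\}$ contains a unique circuit $C$ with $i\in C$. For any $j\in C-\{i\}$, the set $F'=(F-j)\cup\{i\}$ is a basis, and
\[
\gamma(F')=\ell-\gamma(j)+\gamma(i)=\ell-\gamma(j).
\]
Since $\gamma$ is an $\ell$-cover, $\gamma(F')\ge\ell$, which forces $\gamma(j)=0$. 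Hence $\gamma(F')=\ell$, so $F'$ is a focal basis containing $i$. Note that $C-\{i\}\neq\emptyset$ because $i$ is not a loop. Equivalently, one can invoke \Cref{Varbaro-Argument}(1) with $G$ any basis containing $i$ and $A=\{j\}$.

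\emph{Graph case.} Here facets are edges, possibly together with isolated vertices as facets. If $\{i\}$ is itself a facet (an isolated vertex), then $\gamma(\{i\})=0\ge\ell$ forces $\ell\le0$, and then every facet is focal. Otherwise pick an edge $\{i,j\}$. The fact $\gamma(\{i,j\})=\gamma(j)\ge\ell$ alone does not suffice, since we need equality. If $\gamma(j)>0$, then by basicness $j$ lies in a focal edge $\{j,k\}$ with $\gamma(j)+\gamma(k)=\ell$. Combined with $\gamma(j)\ge\ell$ and $\gamma(k)\ge0$, this gives $\gamma(j)=\ell$, so $\{i,j\}$ is focal. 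If $\gamma(j)=0$, then $\ell\le0$ and every facet is focal.

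The main obstacle is the matroid exchange step, specifically guaranteeing the swap stays a basis and that the value of $\gamma$ is forced to equal $\ell$. The fundamental-circuit argument handles both: the cover inequality forces $\gamma(j)=0$ on the circuit, which in turn gives equality $\gamma(F')=\ell$.
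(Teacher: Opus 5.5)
Your proof is correct and follows the paper's argument. Part (1) is the content of \cref{Sufficient-Basic-Cover}(2), which the paper verifies by the same decrement $\gamma-e_i$. In the matroid case you make the same exchange $F'=(F-j)\cup\{i\}$ and force $\gamma(j)=0$ from $\gamma(F')\ge\ell$; you find $j$ via the fundamental circuit, where the paper exchanges against a basis $G\ni i$ and quotes \cref{Varbaro-Argument}(1). In the graph case you get $\gamma(j)=\ell$ by applying (1) at $j$, where the paper decrements $\gamma-e_j$ directly.

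One small correction: your fallback ``$\ell\le 0$, so every facet is focal'' holds only for $\ell=0$. For $\ell<0$ the zero function is the unique basic $\ell$-cover and has no focal facets, so the statement itself fails. Those branches should therefore be read under the tacit hypothesis $\ell\ge 0$, which the paper's proof also needs.
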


\begin{proof} (1) Assume not, then $i \notin \cup_{F \in \Delta(\gamma)} F$ for some $i \in \supp{\gamma}$. Let $\gamma':=\gamma - e_i$, where $e_i(h)=\delta_{ih}$, Kronecker's delta. By construction $\gamma' < \gamma$ and it is easily seen that $\gamma'$ is a $\ell$-cover, contradicting the minimality of $\gamma$.

	(2) Let $i\in \Delta$ and let $G\in\F(\Delta)$ with $i \in G$. By (1), we may assume $i \notin \supp {\gamma}$.\\
	\indent First, assume $\Delta$ is a matroid. Let $F \in \facet( \Delta(\gamma))$. If $i\in F$ we are done. If not, since $\Delta$ is a matroid, there exists a $j \in F-G$ such that $F' = (F - j ) \cup i\in \F(\Delta)$. To conclude we show $F'\in  \Delta(\gamma)$. 
	Since $i \notin \supp {\gamma}$, by \cref{Varbaro-Argument}(1) we find that $j\notin \supp{\gamma}$, so $\gamma(F') = \gamma(F) = \ell$. Thus $F' \in \facet( \Delta(\gamma))$.
	
	Now, assume $\Delta$ is a graph. We show $\gamma(G) = \ell$, so that $G\in \F(\Delta(\gamma))$. Write $G = \{i , j\}$ for some $j \neq i$, then since $i \notin \supp{\gamma}$ we have $\gamma(G) = \gamma(j)$. Suppose $\gamma(j) > \ell$. Then $\gamma'=\gamma-e_j$ is a $\ell$-cover, contradicting the assumption that $\gamma$ is basic. 
\end{proof}

\begin{Example}
There exist pure, connected, 2-dimensional simplicial complexes $\Delta$ for which the conclusion of \cref{Critical-Primes-Cover}(2) does not hold. E.g. $\gamma = (\gamma(1), \gamma(2), ..., \gamma(7)) = (0,1,1,0,0,0,0)$ is a basic $1$-cover of $$\Delta = \langle\{1,2,3\}, \{2,4,5\}, \{3,6,7\} \rangle,$$
	but $\Delta(\gamma) = \{\{2,4,5\}, \{3,$ $6,7\}\}$ does not contain the vertex $1$. However, consistent with \cref{Critical-Primes-Cover}(1), we can see that $\Delta(\gamma)$ indeed covers $\supp{ \gamma} = \{2,3\}$.
	
\end{Example}

\cref{Varbaro-Argument}(3) shows that the symmetric basis exchange property involving $F,G \in \B(\M)$ with $\gamma(F)=\gamma(G) = \ell$ can be done in $\cm{\gamma}$, yielding the following result.

\begin{Corollary}\label{Critical-Matroid} Let $\M$ be a matroid and $\gamma$ a basic $\ell$-cover of $\M$. Then $\cm{\gamma}$ is a submatroid of $\M$ with $r(\cm{\gamma})=r(\M)$. We call $\cm{\gamma}$ the \cmname matroid of $\gamma$ (in $\M$).
\end{Corollary}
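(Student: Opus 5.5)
We show that the set of focal bases $\mathcal B_\gamma:=\{F\in\B(\M)\,:\,\gamma(F)=\ell\}$ satisfies the basis axioms. Then $\cm{\gamma}$, which is generated by these facets, is the independence complex of a matroid on $[n]$ whose bases are exactly $\mathcal B_\gamma$. Since these bases are bases of $\M$, it is a submatroid of $\M$ with $r(\cm{\gamma})=r(\M)$.

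\emph{Non-emptiness.} Since $\gamma$ is a basic $\ell$-cover, \cref{Sufficient-Basic-Cover}(2) provides a facet $F$ with $\gamma(F)=\ell$, as long as $\supp{\gamma}\neq\emptyset$. If $\gamma=0$, then $\ell\le 0$ and every basis $F$ has $\gamma(F)=0$. Being basic then forces $\ell=0$, and every basis is focal. In either case $\mathcal B_\gamma\neq\emptyset$.

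\emph{Exchange axiom.} The cleanest route is the symmetric exchange property of matroids. Let $F,G\in\mathcal B_\gamma$ and $i\in F-G$. By symmetric basis exchange in $\M$ there is $j\in G-F$ with both $(F-i)\cup j\in\B(\M)$ and $(G-j)\cup i\in\B(\M)$. Apply \cref{Varbaro-Argument}(3) with $A=\{i\}$ and $B=\{j\}$, where both $F$ and $G$ are focal; this gives $\gamma(i)=\gamma(j)$. We could also argue directly. Part (1) applied to $F$ gives $\gamma(i)\le\gamma(j)$. Part (1) applied to $G$, with the exchange $(G-j)\cup i$, gives $\gamma(j)\le\gamma(i)$. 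Hence
$$\gamma((F-i)\cup j)=\gamma(F)-\gamma(i)+\gamma(j)=\ell,$$
so $(F-i)\cup j\in\mathcal B_\gamma$. This is precisely the basis exchange axiom for $\mathcal B_\gamma$, so $\mathcal B_\gamma$ is the set of bases of a matroid.

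\emph{Main obstacle.} The only delicate point is that the ordinary exchange axiom is not enough on its own. We need the exchanged element $j$ to work in both directions at once, so that the two inequalities from \cref{Varbaro-Argument}(1) can be combined. This is why we invoke the symmetric exchange property rather than plain basis exchange.

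Finally, we identify $\cm{\gamma}$ with this matroid. Its faces are the subsets of focal bases, so it is the independence complex of the matroid with basis set $\mathcal B_\gamma$. Every element of $\mathcal B_\gamma$ is a basis of $\M$, so it is a submatroid of $\M$ of full rank.
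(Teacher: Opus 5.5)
Your proof is correct and follows the paper's argument: the paper observes that \cref{Varbaro-Argument}(3), applied to a symmetric exchange between two focal bases, shows the exchange can be carried out inside $\cm{\gamma}$, and that is exactly your exchange-axiom step. One small slip is in your non-emptiness paragraph: for $\ell<0$ the zero function is still a basic $\ell$-cover, since nothing lies below it, so being basic does not force $\ell=0$; the statement tacitly assumes $\ell\geq 1$, as throughout the paper, and then $\gamma\neq 0$, so \cref{Sufficient-Basic-Cover}(2) alone already gives a focal basis.
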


Next, we describe the structure of all \cmname matroids. 

\begin{Theorem}\label{Critical-Matroid-Structure} Let $\gamma$ be a basic $\ell$-cover of a matroid $\M$ and let $N_\gamma = N_1\cdots N_s$ be the standard form of $N_\gamma$. (see Structure \cref{MatroidSymPowerThm}.) 
Set $\gamma_i:=\gamma_{N_i}$ for every $1 \leq i \leq s$, $\gamma_{s+1}:=0$, and  $\gamma_0:=\gamma_{x_{[n]}}$.	Then, 
$$\cm{\gamma} = \bigoplus_{i=0}^{s} \cm{\gamma}|_{\supp{\gamma_i}-\supp{\gamma_{i+1}}}.
$$
Hence, we may split up the summands as $\cm{\gamma}=\zcm{\gamma}\oplus \ccm{\gamma}$, where
$$\zcm{\gamma}:=\cm{\gamma}|_{\supp{\gamma}{\dual}}\;\; \text{and}\;\; \ccm{\gamma}:= \cm{\gamma}|_{\supp{\gamma}}=
\bigoplus_{i=1}^{s} \cm{\gamma}|_{\supp{\gamma_i}-\supp{\gamma_{i+1}}} .$$

In particular, for any $F \in \B(\ccm{\gamma})$, we have $F \subseteq \supp{\gamma}$ and $\gamma(F)= \ell>0$, and $\gamma(G)=0$ for any $G\in \B(\zcm{\gamma})$.

\end{Theorem}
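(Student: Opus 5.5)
Write $S_i:=\supp{\gamma_i}-\supp{\gamma_{i+1}}$ for $0\leq i\leq s$. Since $N_\gamma=N_1\cdots N_s$ is the standard form, with $\supp{N_1}\supseteq\cdots\supseteq\supp{N_s}$, we have $\gamma(j)=\#\{i\geq 1 : j\in \supp{N_i}\}$. The first step is to check that this gives $S_i=\{j\in[n] : \gamma(j)=i\}$. In particular the $S_i$ partition $[n]$ (some may be empty), $S_0=\supp{\gamma}\dual$, and $\bigcup_{i\geq1}S_i=\supp{\gamma}$. The theorem therefore says that $\cm{\gamma}$ splits as a direct sum along the level sets of $\gamma$. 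Once the first display is proved, the decomposition $\zcm{\gamma}\oplus\ccm{\gamma}$ follows by grouping summands, since a restriction of a direct sum to a union of parts is the direct sum of the corresponding restrictions.

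The key input is \cref{Varbaro-Argument}(4). By \cref{Critical-Matroid}, the bases of $\cm{\gamma}$ are exactly the focal bases of $\gamma$. For any two of them, $F$ and $G$, there is a bijection $\sigma:F-G\to G-F$ with $\gamma(\sigma(j))=\gamma(j)$. Hence $|F\cap S_i|=|G\cap S_i|$ for every $i$ and every pair of bases $F,G$ of $\cm{\gamma}$; call this common value $m_i$.

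Next I show $m_i=r(\cm{\gamma}|_{S_i})$. Take a basis $I$ of $\cm{\gamma}|_{S_i}$. It is independent in $\cm{\gamma}$, so it extends to a basis $G$ of $\cm{\gamma}$, and then $m_i=|G\cap S_i|\geq |I|=r(\cm{\gamma}|_{S_i})$. The reverse inequality holds because $G\cap S_i$ is independent. It follows that every basis $F$ of $\cm{\gamma}$ meets each $S_i$ in a basis of $\cm{\gamma}|_{S_i}$, and that $r(\cm{\gamma})=\sum_i r(\cm{\gamma}|_{S_i})$.

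To conclude, I use the standard fact (e.g. \cite{Oxley}) that if a matroid $N$ on a partitioned ground set $\bigsqcup S_i$ satisfies $r(N)=\sum r(N|_{S_i})$, then $N=\bigoplus N|_{S_i}$. This follows from submodularity, which forces $r(X)=\sum_i r(X\cap S_i)$ for all $X$. Equivalently, any union of bases of the parts is a basis; this is the converse direction, which is the only point needing care beyond the counting argument above. The final assertions then follow. A basis of $\cm{\gamma}$ is $F_0\sqcup F_+$ with $F_0\in\B(\zcm{\gamma})$ and $F_+\in\B(\ccm{\gamma})$, and $\gamma$ vanishes on $F_0\subseteq\supp{\gamma}\dual$. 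Since $F_0\sqcup F_+$ is focal, $\gamma(F_+)=\ell$ and $F_+\subseteq\supp{\gamma}$, while $\gamma(G)=0$ for $G\in\B(\zcm{\gamma})$. I expect the main obstacle to be the invariance $|F\cap S_i|=|G\cap S_i|$, and this is exactly what \cref{Varbaro-Argument}(4) supplies.
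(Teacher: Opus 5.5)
Your proof is correct and follows the same route as the paper: you partition $[n]$ into the level sets $S_i=\gamma^{-1}(i)$, use \cref{Varbaro-Argument}(4) to see that every focal basis meets each $S_i$ in a basis of $\cm{\gamma}|_{S_i}$, and conclude with a standard direct-sum criterion. The only difference is that the paper outsources the last step to \cite[Prop~4.5]{MN26}, whereas you make it self-contained by proving $r(\cm{\gamma})=\sum_i r(\cm{\gamma}|_{S_i})$ and invoking rank additivity via submodularity.
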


\begin{proof} 
For $0 \leq i \leq s$, let $A_i:= \supp{\gamma_i} - \supp{\gamma_{i+1}}$. 
From the definition of the $\gamma_i$'s and Structure \cref{MatroidSymPowerThm}, one has that $A_i=\gamma^{-1}(i)$. 

We consider the partition $[n]=\bigsqcup_{i=0}^s A_i$. \cref{Varbaro-Argument}(4) implies that, for any $i\geq 0$, the set $\{F\cap A_i\,\mid\,F\in \B(\cm{\gamma})\}$ is the set of basis of a submatroid $\M_i$ of $\cm{\gamma}$. Therefore, (inductive use of) \cite[Prop~4.5]{MN26} yields the decomposition $\cm{\gamma} = \bigoplus_{i=0}^{s}\M_i$ where each 
$\M_i=\cm{\gamma}|_{A_i}$. 
\end{proof}

\begin{Remark}	
By \cref{Critical-Matroid-Structure}, $\cm{\gamma}$ is always decomposable, except possibly when $\gamma$ is a constant function, i.e. there is $a\in \ZZ_+$ such that $\gamma(i)=a$ for all $i$ with $x_i\in \msupp{J(\M)}$. In this case $\cm{\gamma} = \M$ and, by \cite[Thm~4.7]{MN26}, the direct summands of $\cm{\gamma}$ determine the symbolic Noether number of $J(\M)$.
\end{Remark}

We can now easily deduce a characterization of $\zcm{\gamma}$ in terms of some minors of $\M(\gamma)$.
\begin{Proposition}\label{Bijection-Along-Cover} Let $\M$ be a matroid. Let $\gamma$ be a basic $\ell$-cover of $\M$, let $H$ be any independent set of $\cm{\gamma}$ with $H\subseteq \supp{\gamma}$ and $\gamma(H) = \ell$. Then
	\begin{enumerate}

		\item $\cm{\gamma}/H = \zcm{\gamma}.$

        \item If, additionally, $\gamma$ is squarefree, i.e. $\gamma(i) \leq 1$ for all $i$ (e.g. if $\ell=1$), then 
        $$
        \M(\gamma)|_{\supp{\gamma}\dual} = \M|_{\supp{\gamma}\dual}.
        $$

	\end{enumerate} 
\end{Proposition}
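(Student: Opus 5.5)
For (1), the plan is to exploit the direct sum decomposition $\cm{\gamma}=\zcm{\gamma}\oplus\ccm{\gamma}$ of \cref{Critical-Matroid-Structure}. In a direct sum, contracting a subset of one summand's ground set only affects that summand. Since $H\subseteq\supp{\gamma}$, we get
$$\cm{\gamma}/H=\zcm{\gamma}\oplus\left(\ccm{\gamma}/H\right).$$
It therefore suffices to show that $\ccm{\gamma}/H$ has rank $0$, i.e.\ it consists only of loops. Equivalently, $H$ spans $\ccm{\gamma}$: it is a basis of $\ccm{\gamma}$, or at least its closure in $\ccm{\gamma}$ is all of $\supp{\gamma}$.

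To see this, first note that $H$ is independent in $\cm{\gamma}$ and lies in $\supp{\gamma}$, so it is independent in $\ccm{\gamma}$. Extend $H$ to a basis $F$ of $\ccm{\gamma}$. By the last assertion of \cref{Critical-Matroid-Structure}, every basis of $\ccm{\gamma}$ has $\gamma$-value exactly $\ell$, so $\gamma(F)=\ell=\gamma(H)$. Hence $\gamma(F-H)=0$. But $F-H\subseteq\supp{\gamma}$, where $\gamma>0$, which forces $F-H=\emptyset$. So $H=F$ is a basis of $\ccm{\gamma}$, and contracting it leaves the rank-zero matroid on $\supp{\gamma}-H$, all loops. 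One small point needs care: the paper's convention for the ground set of a contraction, namely whether the loops on $\supp{\gamma}-H$ are kept. The identity should be read as equality of matroids up to these loops, or equivalently as equality of independence complexes, which is how the paper identifies matroids.

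For (2), take $\gamma$ squarefree, so $\gamma=\gamma_{x_S}$ with $S=\supp{\gamma}$. We have $\M(\gamma)|_{S\dual}\subseteq\M|_{S\dual}$ automatically, so we need the reverse inclusion: every independent set $I$ of $\M$ with $I\subseteq S\dual$ extends to a focal basis. Extend $I$ to a maximal independent subset of $\M$ and then to a basis $G$ of $\M$ chosen with $|G\cap S|$ as small as possible subject to $I\subseteq G$. Since $\gamma(G)=|G\cap S|\geq\ell$, it suffices to show $|G\cap S|=\ell$.

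To do this, take a focal basis $F$, which exists by \cref{Sufficient-Basic-Cover}, so $|F\cap S|=\ell$. Then exchange: as long as $|G\cap S|>\ell$, the plan is to use exchange (\cref{Varbaro-Argument}(2) applied to $G$ and $F$, or the augmentation axiom) to swap an element of $G\cap S$ for an element of $F-S$, keeping $I$ inside $G$. This would contradict minimality. Concretely, apply augmentation in $\M$ to the independent sets $(G-S)$ and $F-S$ extended appropriately. The independent set $G-S$ contains $I$; one can grow $G-S$ inside $S\dual$ to a maximal independent subset $K$ of $S\dual$ containing $I$, and then $r(K)=r_\M(S\dual)$. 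Completing $K$ by elements of $S$ gives a basis $G'$ with $|G'\cap S|=r(\M)-r_\M(S\dual)$.

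The main obstacle is showing $r(\M)-r_\M(S\dual)=\ell$. We have $r(\M)-r_\M(S\dual)\leq\ell$ because $F-S$ is independent in $S\dual$ of size $r(\M)-\ell$. We also have $\geq\ell$ because every basis meets $S$ in at least $\ell$ elements, and a basis containing a maximal independent subset of $S\dual$ meets $S$ in exactly $r(\M)-r_\M(S\dual)$ elements. Hence equality holds, $G'$ is focal, and $I\subseteq G'\cap S\dual$, proving (2).
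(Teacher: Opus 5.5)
Your proof is correct and follows the paper's argument. For (1) you use the decomposition $\cm{\gamma}=\zcm{\gamma}\oplus\ccm{\gamma}$ together with the fact that $H$ is a basis of $\ccm{\gamma}$, which you justify via $\gamma(F-H)=0$ while the paper only asserts it. For (2) you extend a maximal independent subset of $\supp{\gamma}\dual$ to a basis of $\M$ that meets $\supp{\gamma}$ in exactly $r(\M)-r_{\M}(\supp{\gamma}\dual)=\ell$ elements, so that basis is focal.

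Your (2) is slightly more self-contained than the paper's: you prove the rank identity the paper states without argument, and you work with independent sets directly instead of routing through (1). The exploratory paragraph about choosing $G$ with $|G\cap S|$ minimal and exchanging elements is superfluous and can be deleted.
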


Therefore, the matroid $\cm{\gamma}/H$ is an invariant of $\M$ and $\gamma$ (it is independent of $H$). So, we provide the following definition.
\begin{Definition}
Let $\gamma$ be a basic $\ell$-cover of a matroid $\M$. The \term{co\cmname matroid} of $\gamma$ is $\zcm{\gamma}:=\cm\gamma|_{\supp{\gamma}\dual}$ or, equivalently, $\cm{\gamma}/H$ for some (equivalently, every) $H \in \mathcal I(M)$ with $H\subseteq \supp{\gamma}$ and $\gamma(H)=\ell.$
\end{Definition}

\begin{proof} (1) Notice that $H$ is a basis of $\ccm{\gamma}$. The statement then follows from the decomposition $\cm{\gamma} = \zcm{\gamma} \oplus \ccm{\gamma}$ in \cref{Critical-Matroid-Structure}.

(2) Since $\gamma$ is a squarefree $\ell$-cover, $r(\M|_{\supp{\gamma}\dual})=r(\M)-\ell$. Also, for any $H \in \supp{\gamma}$ with $\gamma(H) = \ell$ we must have $|H| = \ell$. Thus by $(1)$, we have $r(\M) - \ell =r(\cm{\gamma}\con H)=r((\cm{\gamma}|_{\supp{\gamma}\dual})$. Thus $r(\M|_{\supp{\gamma}\dual}) = r((\cm{\gamma}|_{\supp{\gamma}\dual})$.
Since $\B(\cm{\gamma}) \subseteq \B(\M)$, from the equality of ranks, it follows that $\mathcal B(\cm{\gamma}|_{\supp{\gamma}\dual})\subseteq \mathcal B (\M|_{\supp{\gamma}\dual})$. It remains to show the reverse containment. Let $F\in \mathcal B (\M|_{\supp{\gamma}\dual})$, then $F\in \mathcal I(\M)$ with $|F|=r(\M)-\ell$ and $\gamma(F)=0$. Let $G\supsetneq F$ be a basis of $\M$, so $\gamma(G)\geq \ell=|G-F|$. Since $\gamma$ is squarefree, we see that equality holds, yielding  $G=F\sqcup K$ for some $K\subseteq \supp{\gamma}$ with $\gamma(K)=\ell$. Hence $F\in \cm{\gamma}/K= \cm{\gamma}|_{\supp{\gamma}\dual}$, where the equality follows from $(1)$.
\end{proof}

We can now easily compute the heights of the cover ideals of the matroids in  \cref{Critical-Matroid-Structure}.

\begin{Corollary}\label{Critical-Matroid-Height}
Let $J=J(\M)$ be the cover ideal of a matroid $\M$, let $N\in G(\symp{J})$ with symbolic type $(c_1,...,c_s)$. (see \cref{Def-SymbType}.) Then, 
$$
\h\, J(\cm{\gamma_N}) = r(\M)=\h\,J, \quad  \h\, J(\ccm{\gamma_N}) = c_1, \quad \h\, J(\zcm{\gamma_N}) = \h\,J - c_1.
$$
\end{Corollary}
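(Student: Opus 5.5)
The key fact is that for any matroid $\mathcal N$, Proposition~\ref{Basic-Matroid-Properties}(3),(6) give $\h\,J(\mathcal N)=r(\mathcal N)$, since every basis of $\mathcal N$ has the same cardinality. The plan is therefore to compute the three ranks $r(\cm{\gamma_N})$, $r(\ccm{\gamma_N})$ and $r(\zcm{\gamma_N})$, and then use $\h\,J=r(\M)$. One technicality is that $\ccm{\gamma_N}$ and $\zcm{\gamma_N}$ live on proper subsets of $[n]$. We regard their cover ideals in the polynomial ring on those vertex sets, or extended to $R$; heights are unaffected either way.

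First, $N\in G(\symp{J})$ means, by Remark~\ref{Sufficient-Basic-Cover}(2), that $\gamma:=\gamma_N$ is a basic $\ell$-cover, where $\ell=\sum c_i$. Corollary~\ref{Critical-Matroid} then gives $r(\cm{\gamma})=r(\M)$, so $\h\,J(\cm{\gamma})=r(\M)=\h\,J$.

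Next, Theorem~\ref{Critical-Matroid-Structure} gives the decomposition $\cm{\gamma}=\zcm{\gamma}\oplus\ccm{\gamma}$. Ranks are additive over direct sums, so
\[
r(\zcm{\gamma})=r(\M)-r(\ccm{\gamma}),
\]
and the third equality follows from the second. It remains to show $r(\ccm{\gamma})=c_1$. Further decompose
\[
\ccm{\gamma}=\bigoplus_{i=1}^s \cm{\gamma}|_{A_i}, \qquad A_i=\gamma^{-1}(i).
\]
By the proof of Theorem~\ref{Critical-Matroid-Structure}, every focal basis $F$ meets each $A_i$ in a set $F\cap A_i$ of the constant size $r_i:=r(\cm{\gamma}|_{A_i})$. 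Hence
\[
\ell=\gamma(F)=\sum_{i\ge 1} i\,r_i \qquad\text{and}\qquad r(\ccm{\gamma})=\sum_{i\ge1} r_i=|F\cap\supp{\gamma}|.
\]
The identity $\ell=\sum_i i\,r_i$ is consistent with the target, since $\sum_i i\,r_i=\sum_j |F\cap \supp{N_j}|$ and $\ell=\sum_j c_j$. The point is to show that the focal basis $F$ satisfies $|F\cap\supp{N_1}|=c_1$. Because $\supp{\gamma}=\supp{N_1}$, this equals $r(\ccm{\gamma})$.

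This is the main obstacle. Since $N_1\in\sfp_{c_1}(J)$, Remark~\ref{Sufficient-Basic-Cover}(1) gives $|F\cap\supp{N_1}|\ge c_1$, and likewise $|F\cap \supp{N_j}|\ge c_j$ for every $j$. Summing over $j$ gives
\[
\ell=\gamma(F)=\sum_j |F\cap\supp{N_j}|\ \ge\ \sum_j c_j=\ell,
\]
which forces equality termwise; in particular $|F\cap\supp{N_1}|=c_1$. So the difficulty reduces to the sum identity, which is just the standard form $\gamma=\sum_j\gamma_{N_j}$, together with the membership $N_j\in\sfp_{c_j}(J)$ from the Structure Theorem~\ref{MatroidSymPowerThm}. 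Consequently $r(\ccm{\gamma})=c_1$, and hence $\h\,J(\zcm{\gamma})=r(\M)-c_1=\h\,J-c_1$.
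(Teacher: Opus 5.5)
Your proof is correct and follows the paper's route. The first equality comes from Corollary~\ref{Critical-Matroid}, the third is reduced to the second via $\cm{\gamma_N}=\zcm{\gamma_N}\oplus\ccm{\gamma_N}$ (the paper phrases this as heights adding for ideals in disjoint variables), and the second follows because focal bases meet $\supp{N_1}$ in exactly $c_1$ elements. The paper passes to $\sqrt{N}=N_1$ and asserts $r(\ccm{\gamma_N})=r(\ccm{\gamma_{\sqrt{N}}})$ without detail, and your termwise equality $\ell=\sum_j|F\cap\supp{N_j}|\geq\sum_j c_j=\ell$ supplies a direct justification of that step.
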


\begin{proof} By \cref	{Critical-Matroid} $r(\cm{\gamma_N})=r(\M)$, so one has $\h\, J(\cm{\gamma_N}) = r(\M)$. 
By \cref{Critical-Matroid-Structure} and \cite[Thm~4.5]{MN26}, $J(\cm{\gamma_N})=J(\zcm{\gamma_N}) + J(\ccm{\gamma_N})$ is a sum of transversal ideals, so we only need to show $\h\, J(\ccm{\gamma_N}) = c_1$. By \cref{Critical-Matroid-Structure} $r(\ccm{\gamma_N}) = r(\ccm{\gamma_{\sqrt{N}}})$. Now $\sqrt{N}$ is a minimal generator of $J^{(c_1)}$. Since $\sqrt{N}$ is squarefree and every basis $F$ of $ \ccm{\gamma_{\sqrt{N}}}$ is contained in $\supp{\sqrt{N}}$, we have $|F| = 
O_F(\sqrt{N}) = c_1$.
\end{proof}

We now give an alternative, useful description for the symbolic powers of the cover ideal of any \cmname matroid in terms of a colon ideal of a symbolic power of the cover ideal of the matroid.

\begin{Proposition}\label{Critical-Matroid-Cover-Ideal} 
Let $J=J(\M)$ be the cover ideal of a matroid $\M$, and let $N\in G(\symp{J})$, then for any $k\geq 1$,$$
J(\cm{\gamma_N}^{(k)}){\color{blue}} = J^{(\ell+k)} : N.
$$
Furthermore, $\{LN\,\mid\, L \in G(J(\cm{\gamma_N}^{(k)})\}\subseteq  G(J(\cm{\gamma_N})^{(\ell+k)})\cap G(J^{(\ell+k)})$.

\end{Proposition}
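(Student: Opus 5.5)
The plan is to reduce the colon to a statement about primes. I read $J(\cm{\gamma_N}^{(k)})$ as $J(\cm{\gamma_N})^{(k)}$ and write $\gamma:=\gamma_N$. Since $J^{(\ell+k)}=\bigcap_{G\in\B(\M)}\p_G^{\ell+k}$ and $N$ is a monomial, we have $\p_G^{m}:N=\p_G^{\max\{m-\gamma(G),0\}}$. Hence
$$J^{(\ell+k)}:N=\bigcap_{G\in \B(\M)}\p_G^{\,k-d_G},\qquad d_G:=\gamma(G)-\ell\geq 0 .$$
Equivalently, a monomial $L$ lies in the colon if and only if $\gamma_L(G)\geq k-d_G$ for every $G\in\B(\M)$.

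The focal bases are exactly those with $d_G=0$, and they are the bases of $\cm{\gamma}$ by \cref{Critical-Matroid}. So the intersection over focal $G$ alone is $J(\cm{\gamma})^{(k)}$, which gives $J^{(\ell+k)}:N\subseteq J(\cm{\gamma})^{(k)}$ immediately.

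For the reverse containment, take a monomial $L$ with $\gamma_L(F)\geq k$ for all $F\in\B(\cm\gamma)$, and let $G\in\B(\M)$ be non-focal. I must show $\gamma_L(G)+d_G\geq k$. I will induct on $m(G):=\min\{|G-F|\,:\,F\in\B(\cm{\gamma})\}$, choosing a focal $F$ attaining this minimum.

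By \cref{Varbaro-Argument}(2) there is a bijection $\sigma:F-G\to G-F$ with $(F-i)\cup\sigma(i)\in\B(\M)$ and $\gamma(i)\leq\gamma(\sigma(i))$. Then
$$d_G=\sum_{i\in F-G}\bigl(\gamma(\sigma(i))-\gamma(i)\bigr),$$
a sum of nonnegative terms, at least one of which is positive.

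\emph{Case $\gamma(\sigma(i))=\gamma(i)$ for some $i$.} The basis $F_i:=(F-i)\cup\sigma(i)$ is then focal and strictly closer to $G$. This contradicts the choice of $F$, so in fact every term of the sum is positive and $d_G\geq|F-G|$.

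\emph{Remaining case.} I use the symmetric exchange: pick $j\in G-F$ and $i\in F-G$ such that both $G':=(G-j)\cup i$ and $(F-i)\cup j$ are bases. Then
$$d_{G'}=d_G-(\gamma(j)-\gamma(i))\leq d_G-1,$$
and $G'$ is closer to $F$. By induction, $\gamma_L(G')+d_{G'}\geq k$.

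To transfer this back to $G$ I must control $\gamma_L(i)-\gamma_L(j)$ against $\gamma(j)-\gamma(i)\geq1$. This is the main obstacle. The first thing I would try is to choose the exchange pair so that $(F-i)\cup j$ is focal-adjacent, and then use the $k$-cover inequality for $L$ on focal bases together with \cref{Varbaro-Argument}(1) applied to $\gamma_L$ inside $\cm\gamma$ to bound $\gamma_L(i)\leq\gamma_L(j)$. If that fails, I would instead pass to the extremal case: take $L$ to be a basic $k$-cover of $\cm\gamma$, use the direct-sum decomposition of \cref{Critical-Matroid-Structure}, and compare with a lexicographically minimal basis for the weight $\gamma+\gamma_L$.

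For the ``Furthermore'' part, let $L\in G(J(\cm\gamma)^{(k)})$. Then $LN\in J^{(\ell+k)}$ by the equality just proved, and $LN\in J(\cm\gamma)^{(\ell+k)}$ because every basis of $\cm\gamma$ is focal. For minimality I use \cref{Sufficient-Basic-Cover}(2): for each $i\in\supp{\gamma_L+\gamma}$ I need a basis $F\ni i$ with $(\gamma_L+\gamma)(F)=\ell+k$, and it suffices to find a focal $F\ni i$ with $\gamma_L(F)=k$.
\begin{enumerate}
\item If $i\in\supp{\gamma_L}$, this follows from \cref{Critical-Primes-Cover}(1) applied to the basic $k$-cover $\gamma_L$ of $\cm\gamma$.
\item If $i\in\supp{\gamma}$, then $i$ lies in some focal basis by \cref{Critical-Primes-Cover}(1), so $\{i\}\in\cm\gamma$. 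Now \cref{Critical-Primes-Cover}(2), applied to the matroid $\cm\gamma$ and the basic $k$-cover $\gamma_L$, gives a basis of $\cm\gamma(\gamma_L)$ containing $i$.
\end{enumerate}
This shows $LN$ is a minimal generator of both $J^{(\ell+k)}$ and $J(\cm\gamma)^{(\ell+k)}$.
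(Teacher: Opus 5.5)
\textbf{The gap is in the reverse containment, and it cannot be closed for $k\geq 2$.} The step you flag as the main obstacle is $J(\cm{\gamma_N})^{(k)}\subseteq J^{(\ell+k)}:N$, and under your (natural) reading it is false.

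Take $\M=U_{2,3}$, so $J=(x_1x_2,x_1x_3,x_2x_3)$, with $\ell=1$ and $N=x_1x_2$.
\begin{itemize}
\item The focal bases of $\gamma_N=(1,1,0)$ are $\{1,3\}$ and $\{2,3\}$. Hence $J(\cm{\gamma_N})=(x_1,x_3)\cap(x_2,x_3)$, and $L=x_3^2$ is a minimal generator of $J(\cm{\gamma_N})^{(2)}$.
\item But $LN=x_1x_2x_3^2\notin \p_{\{1,2\}}^3$. So $L\notin J^{(3)}:N$ and $LN\notin G(J^{(3)})$.
\end{itemize}
Thus both the equality and the $G(J^{(\ell+k)})$ half of the ``Furthermore'' fail.

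In your notation, the non-focal basis $G=\{1,2\}$ has $d_G=1<k=2$ and $\gamma_L(G)=0$. With $F=\{1,3\}$, $i=3$, $j=2$, the inequality $\gamma_L(i)-\gamma_L(j)\leq \gamma(j)-\gamma(i)$ that you need reads $2\leq 1$.

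Neither proposed repair could work in any case. By your choice of a nearest focal $F$, every exchange $(F-i)\cup j$ is non-focal, so it is not a basis of $\cm{\gamma_N}$. The $k$-cover condition on $L$ therefore says nothing about $\gamma_L(i)-\gamma_L(j)$.

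The paper's own proof has the same hole. It checks $O_F(LN)\geq \ell+k$ only on focal bases and then asserts the equality; that step is valid only for $k=1$.

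\textbf{What survives of your argument.}
\begin{itemize}
\item Your first step gives $J^{(\ell+k)}:N\subseteq J(\cm{\gamma_N})^{(k)}$ for every $k$.
\item Equality holds for $k=1$, and there no induction is needed: every non-focal $G$ has $d_G\geq 1=k$.
\item Your minimality argument is correct. For every $k$ it shows $LN\in G(J(\cm{\gamma_N})^{(\ell+k)})$, and also $LN\in G(J^{(\ell+k)})$ whenever $LN\in J^{(\ell+k)}$, since the same focal witnesses work.
\item Your bound $d_G\geq |F-G|$, for a focal $F$ nearest to $G$, already settles the squarefree case for all $k$. If $L$ is squarefree, then $\gamma_L(G)\geq \gamma_L(F)-\gamma_L(F-G)\geq k-|F-G|\geq k-d_G$.
\end{itemize}
This squarefree version is all that \cref{Min-Square-Free}, and hence \cref{Matroid-Resolution}, actually needs.
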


\begin{proof} 
For any $F\in \B(\M)$, one has $O_F(N)=\ell$ $\Llra$ $F\in \B(\cm{\gamma_N})$.  
Hence, a monomial $L$ is in $J(\cm{\gamma_N})$ $\Llra$ $O_F(LN)\geq (\ell+k)$ for all $F\in \B(\cm{\gamma_N})$. So, $J(\cm{\gamma_N}) = J^{((\ell+k)}) : N$. 

Next, take any $L \in G(J(\cm{\gamma_N}))$. We will use \cref{Sufficient-Basic-Cover}(2) to show the inclusion in the ``furthermore" statement. Fix $x_i\in \supp{LN}$. By minimality of the generators $N$ and $L$, $\{i\}$ is  independent in $\M$. Then, by \cref{Critical-Primes-Cover}(2), there is an $F \in \B(\cm{\gamma_N})\subseteq \B(\M)$ with  $i \in F$ and, by \Cref{Sufficient-Basic-Cover}(2), we can find such an $F$ with  $O_F(L) = k$. Then $O_F(LN)=O_F(L)+O_F(N)=\ell+k$.  By the first paragraph, we have $LN \in J^{(\ell+k)}$ and $LN \in J(\cm{\gamma_N})^{(\ell+k)}$. Hence with the above and  \cref{Sufficient-Basic-Cover}(2) we have simultaneously shown that $LN\in G(J(\cm{\gamma_N})^{(\ell+k)})$ and $LN\in G(J^{(\ell+k)})$. \end{proof}

We conclude this section by identifying the \cmname matroids as matroids associated to certain colons of squarefree monomial ideals. These ideals are strongly connected to the ones appearing in \cref{Section-Ordering}, see \Cref{Colons-Are-CocriticalMatroids}, 
and they play a role in the description of the resolution of the symbolic powers of \Cmatroidal ideals, \cite{MN2}.

We need some technical lemmas about skeletons and contraction. First, we give a setting where taking cover ideals of truncations commute with the colon operation.

\begin{Proposition}\label{Colon-Skeleton-Commute}
	 Let $J=J(\M)$ be the cover ideal of a matroid $\M$. Let $N$ be a monomial. 
     \begin{enumerate}
         \item If $N\notin J$, then for $\ell \geq 1$ $$\sfp_\ell(J: N) = \sfp_\ell(J) : N.$$
         \item If $N\in G(J)$, then for any $x_v\in \supp{N}$ and  any $\ell > 1$ 
	 $$\sfp_\ell(J) : N = \sfp_{\ell-1}(J(\M\con v) : N).$$
     \end{enumerate} 
\end{Proposition}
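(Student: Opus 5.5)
The plan is to reduce both parts to a rank computation in the matroid, using the description of squarefree parts as cover ideals of truncations in \cref{Sqfree-parts}(1). Write $r:=r(\M)$. For a squarefree monomial $x_T$ we have $x_T\in \sfp_\ell(J)$ iff $|F\cap T|\ge \ell$ for all $F\in\B(\M)$. This holds iff every basis meets $T^*=[n]-T$ in at most $r-\ell$ elements, i.e.
$$x_T\in \sfp_\ell(J)\iff r_\M(T^*)\le r-\ell. \qquad (\ast)$$
Both sides of each identity are squarefree monomial ideals. The left sides are colons of squarefree ideals by a monomial, hence equal to colons by $x_S$ with $S:=\supp{N}$. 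A colon $I:x_S$ of a squarefree monomial ideal $I$ is generated by the squarefree monomials $x_T$ with $T\cap S=\emptyset$ and $x_{T\cup S}\in I$. The right sides are ideals of the form $\sfp_k(J(\M'))$, extended to $R$, where $\M'$ is a matroid on a ground set $E'$; they are generated by squarefree $x_T$ with $T\subseteq E'$. So it suffices to compare, for squarefree $x_T$ with $T\cap S=\emptyset$, the two membership conditions via $(\ast)$. One also checks that the right sides have no minimal generators meeting $S$. In (1) we have $E'=[n]-S$. In (2), after dividing $N$ by $x_v$, a generator meeting $S-v$ would be absorbed by the colon anyway.

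For (1): since $N\notin J$, $S$ is not a cover, so some basis is disjoint from $S$. Hence $r(\M\setminus S)=r$, and $J:N=J(\M\setminus S)$ by \cref{Colon-Matroid-Structure}. For $T\cap S=\emptyset$, the condition $x_T\in\sfp_\ell(J)\!:\!N$ reads $r_\M((T\cup S)^*)\le r-\ell$ by $(\ast)$. The condition $x_T\in\sfp_\ell(J(\M\setminus S))$ reads, by $(\ast)$ in $\M\setminus S$, $r_{\M\setminus S}(([n]-S)-T)\le r-\ell$. The two sets coincide, since $([n]-S)-T=(T\cup S)^*$, and ranks in a deletion agree with ranks in $\M$. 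So the two ideals are equal.

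For (2): since $N\in G(J)$, $S$ is a cocircuit, so $S^*$ is a hyperplane (\cref{Basic-Matroid-Properties}(5)) and $v\notin \overline{S^*}=S^*$. Also $N\notin J(\M\con v)R$. Indeed, by \cref{Partition-By-Contraction} the generators of $J(\M\con v)$ are the generators of $J$ not divisible by $x_v$, and such a generator dividing $N$ would contradict minimality of $N$. Since $x_v$ is not a variable of $J(\M\con v)$, the colon by $N$ equals the colon by $N/x_v$. The matroid $\M\con v$ lives on $[n]-v$ with rank $r-1$ and rank function $r_{\M/v}(X)=r_\M(X\cup v)-1$.

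Now take $T\cap S=\emptyset$ and set $X:=(T\cup S)^*\subseteq S^*$. Membership on the left is $r_\M(X)\le r-\ell$. On the right, the complement of $T\cup(S-v)$ in $[n]-v$ is again $X$, so by $(\ast)$ for $\sfp_{\ell-1}$ in $\M\con v$ the condition is $r_\M(X\cup v)-1\le (r-1)-(\ell-1)$, i.e. $r_\M(X\cup v)\le r-\ell+1$. The key step, which I expect to be the only real subtlety, is that $r_\M(X\cup v)=r_\M(X)+1$. This holds because $X\subseteq S^*$, $S^*$ is a flat, and $v\notin S^*$. With it, the two conditions agree and (2) follows.

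The hypothesis $\ell>1$ only ensures that $\sfp_{\ell-1}$ is meaningful.
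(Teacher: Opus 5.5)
Your proof is correct. It rests on the same underlying idea as the paper's, but uses more elementary tools.

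\textbf{How the paper argues.} It identifies each side as the cover ideal of an explicit matroid, via Remark~\ref{Ideal-Restriction-Structure}(5) and Remark~\ref{Sqfree-parts}(1). One side is a truncation of the restriction to $S^*$; the other is the restriction to $S^*$ of a truncation. It then cites \cite[Prop~5.1]{Crapo-Schmitt-05} to commute truncation and restriction, using $r(S^*)=r(\M)$ in (1) and $r(S^*)=r(\M)-1$ in (2). For (2) it also needs $\M|_{S^*}=(\M\con v)|_{S^*}$. It obtains this from the focal-matroid result Proposition~\ref{Bijection-Along-Cover}(2), applied to $\gamma_N$.

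\textbf{How you argue.} You turn Remark~\ref{Sqfree-parts}(1) into the single rank criterion $(\ast)$. Both parts then reduce to comparing ranks of one set, $X=(T\cup S)^*$. Your key step is that $r_\M(X\cup v)=r_\M(X)+1$ for $X\subseteq S^*$, because $S^*$ is a flat not containing $v$. This is exactly the statement $\M|_{S^*}=(\M\con v)|_{S^*}$, proved directly without focal matroids.

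\textbf{Trade-off.} Your version is self-contained and shows plainly why the index drops by one in (2). The paper's version is shorter given the citations. It also keeps the result inside the focal-matroid framework, which the paper reuses in Theorem~\ref{Colons-Are-CocriticalMatroids}.

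\textbf{Points to tighten.}
\begin{itemize}
\item In (2), the right-hand condition you compute, $x_{T\cup(S-v)}\in \sfp_{\ell-1}(J(\M\con v))$, is membership in $\sfp_{\ell-1}(J(\M\con v)):N$. It is not literally membership in $\sfp_{\ell-1}(J(\M\con v):N)$. The two ideals agree by part (1), applied to the matroid $\M\con v$ and the monomial $N/x_v$. The hypothesis needed there is exactly the fact $N\notin J(\M\con v)R$ that you verified. Equivalently, $(\M\con v)\setminus(S-v)$ still has rank $r-1$. Say this explicitly.
\item To invoke Corollary~\ref{Partition-By-Contraction} you need $v$ to be a non-loop. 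This holds because $v$ lies in the cocircuit $S$; state it.
\item In your setup paragraph, ``left'' and ``right'' are swapped for part (1). Your actual computation does compare the correct ideals.
\end{itemize}
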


\begin{proof} Let $A:=\{i\in [n]\,\mid\,x_i\in \supp{N}\}$. \\
(1) Combining \cref{Ideal-Restriction-Structure}(5) and \cref{Sqfree-parts}(1), the matroids corresponding to the left hand and right hand side are respectively $$(\M|_{A\dual})\skel{r(\M)-\ell+1} \textrm{\quad and \quad} (\M\skel{r(\M)-\ell +1})|_{A\dual}.$$ Since $N\notin J$, we have $r(\M)-r(A\dual) = 0$, so by \cite[Prop 5.1]{Crapo-Schmitt-05} the two matroids are equal.\smallskip

(2) Since $N\in G(J)$, we have $r(M) - r(A\dual) = 1$. Now, the matroid of $\sfp_\ell(J) : N$ is $(\M\skel{r(\M)-\ell +1})|_{A\dual}$ which, by \cite[Prop 5.1]{Crapo-Schmitt-05}, is equal to $(\M|_{A\dual})\skel{r(\M)-\ell }$. It remains to show that $\M|_{A\dual} = (\M\con v)|_{A\dual}$, but this is \cref{Bijection-Along-Cover}(2) applied to $\gamma_N$.
\end{proof}

We now present a result on the finer structure of the ideal $J(\cm \gamma)$. We will decompose the ideal into a sum of monomial ideals with disjoint supports, and the summands are all colon ideals of a specific form.
\begin{Proposition}\label{Critical-Matroid-Cover-Ideal-Decomposition} 
	Let $J:=J(\M)$ be the cover ideal of a matroid and $N \in G(\symp{J})$. Write $N= N_1 \cdots N_s$ as in the Structure \cref{MatroidSymPowerThm}, with symbolic type $(c_1,...,c_s)$. Set $N_0 = x_{[n]}$,  $N_{s+1} = 1$, and $c_{s+1}=0$. Then, 
	$$J(\cm{\gamma_N}) = \sum_{i=1}^{s+1} \sfp_{c_i+1}(J|_{\supp{N_{i-1}}}) : N_i.$$
Thus, for $1\leq i \leq s + 1$, letting $A_{i-1}:=\{h\in [n]\,\mid\,x_h\in \supp{N_{i-1}}-\supp{N_{i}}\}$,  we have
	 $$J(\cm{\gamma_N}|_{A_{i-1}}) = \sfp_{c_i+1}(J|_{\supp{N_{i-1}}}) : N_i.$$ 
\end{Proposition}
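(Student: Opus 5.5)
The decomposition of \cref{Critical-Matroid-Structure} does most of the work. Writing $A_{i-1}=\supp{N_{i-1}}-\supp{N_i}$ for $1\leq i\leq s+1$, that theorem gives $\cm{\gamma_N}=\bigoplus_{i=1}^{s+1}\cm{\gamma_N}|_{A_{i-1}}$. Since the cover ideal of a direct sum of matroids on disjoint ground sets is the sum of the cover ideals of the summands (as used in the proof of \cref{Critical-Matroid-Height}, via \cite[Thm~4.5]{MN26}), we get
$$J(\cm{\gamma_N})=\sum_{i=1}^{s+1}J(\cm{\gamma_N}|_{A_{i-1}}).$$
So the first displayed equality follows from the second, and the plan is to prove the summand-wise identity $J(\cm{\gamma_N}|_{A_{i-1}})=\sfp_{c_i+1}(J|_{\supp{N_{i-1}}}):N_i$ for each fixed $i$.

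\textbf{Identify the bases of the summand.} Fix $i$ and set $S:=\supp{N_{i-1}}$, so that $A_{i-1}=S-\supp{N_i}$ and, by the proof of \cref{Critical-Matroid-Structure}, $\gamma_N\equiv i-1$ on $A_{i-1}$. The bases of $\cm{\gamma_N}|_{A_{i-1}}$ are the sets $F\cap A_{i-1}$ for $F$ a focal basis. On the other side, $J|_S$ is the cover ideal of the complex whose facets are the sets $F\cap S$ of maximal size $r_S:=r(\M|_S)$, i.e.\ $J|_S\supseteq J(\M|_S)$ as in \cref{Ideal-Restriction-Structure}(5), \cref{Rmk-2.15}. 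I expect to reduce to the case $S=[n]$ by passing to $\M|_S$ or a contraction of $\M$ along a focal piece in $[n]-S$, using \cref{Ideal-Restriction-Structure}(6) and \cref{Sym-Powers-Contraction}. Then $\sfp_{c_i+1}(J|_S)$ is the cover ideal of the truncation at rank $r_S-c_i$ by \cref{Sqfree-parts}(1), and colon by the squarefree monomial $N_i$ deletes $\supp{N_i}$ as in \cref{Colon-Matroid-Structure}. Hence the right-hand side is the cover ideal of the matroid $(\M|_S)^{[r_S-c_i]}|_{S-\supp{N_i}}$.

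\textbf{Compare ranks and bases.} The commutation of truncation with restriction is exactly the \cite[Prop~5.1]{Crapo-Schmitt-05} argument from \cref{Colon-Skeleton-Commute}. With it, the goal becomes:
\begin{enumerate}
\item[(a)] $r(\cm{\gamma_N}|_{A_{i-1}})=r_S-c_i$;
\item[(b)] every independent set of $\M|_{A_{i-1}}$ of that size extends to a focal basis.
\end{enumerate}
For (a), combine \cref{Critical-Matroid-Height} applied to the squarefree cover $N_i\cdots N_s$ (heights $c_i$) with the telescoping of ranks along the chain $\supp{N_1}\supseteq\cdots\supseteq\supp{N_s}$. For (b), I would mimic the proof of \cref{Bijection-Along-Cover}(2). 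Take an independent $K\subseteq A_{i-1}$ of the right size, extend it by a focal basis of the outer layers (where $\gamma_N$ takes values $<i-1$) and a basis of the inner layers, and use \cref{Varbaro-Argument}(1),(4) to show that the resulting basis $G$ has $\gamma_N(G)=\ell$, hence is focal.

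\textbf{Main obstacle.} The hard step is (b) together with the precise rank count in (a). One needs that the focal bases restricted to the layer $A_{i-1}$ fill out all independent sets of the truncation. This requires controlling how $\gamma_N$ being constant on the layer $A_{i-1}$ interacts with exchanges into higher layers; the bijective exchange in \cref{Varbaro-Argument}(4) is the tool I would try first.
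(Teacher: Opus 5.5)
There is a genuine gap in how you identify the right-hand side. For $S=\supp{N_{i-1}}$, the ideal $J|_S$ is \emph{not} governed by the restriction $\M|_S$. \cref{Rmk-2.15} gives $J(\M)|_A\subseteq J(\M|_A)$, which is the opposite of the containment you wrote, and the containment is strict in general (\cref{examples-rmk}(1)). The minimal generators of $J|_S$ are the cocircuits of $\M$ contained in $S$, i.e.\ the cocircuits of the contraction $\M\con([n]-S)$. So $J|_S=J(\M\con([n]-S))$, whose facets are the sets $F\cap S$ of \emph{minimal} size.

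Consequently your target matroid $(\M|_S)^{[r_S-c_i]}|_{A_{i-1}}$ and the rank count (a) are false once $i\geq 2$. Likewise (b) tests independence in $\M|_{A_{i-1}}$, when it must be tested modulo $[n]-S$. Concretely, take $\M=U_{2,4}$, $N=x_1x_2x_3\in G(J)$ (so $\ell=s=c_1=1$) and $i=2$. Then $S=A_1=\{1,2,3\}$ and the right-hand side is $J|_S=(x_1x_2x_3)$. The focal bases are $\{1,4\},\{2,4\},\{3,4\}$, so $\cm{\gamma_N}|_{A_1}=U_{1,3}$ has rank $1$. Your recipe instead gives $U_{2,3}$, of rank $r_S-c_2=2$ and cover ideal $(x_1x_2,x_1x_3,x_2x_3)$. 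You do mention contraction as one option for the reduction, but the plan then commits to $\M|_S$ and $r_S=r(\M|_S)$. In any case steps (a) and (b) are left unproved.

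With contraction in place your route does work, and more simply than you expect. Put $r:=r(\M)$, $E_j:=[n]-\supp{N_j}$ for $0\le j\le s+1$, and $c_0:=r$. Since $N_j\in G(\sfp_{c_j}(J))=G(J(\M^{[r-c_j+1]}))$, \cref{Basic-Matroid-Properties}(5) shows that $E_j$ is a flat of rank $r-c_j$ for $1\le j\le s$. Then $J|_{\supp{N_{i-1}}}=J(\M\con E_{i-1})$, a matroid of rank $c_{i-1}$. By \cref{Sqfree-parts}(1) and \cref{Colon-Matroid-Structure}, the right-hand side is the cover ideal of $(\M\con E_{i-1})^{[c_{i-1}-c_i]}|_{E_i-E_{i-1}}=(\M|_{E_i})\con E_{i-1}$. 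The truncation is invisible here because $E_i-E_{i-1}$ has rank $c_{i-1}-c_i$ in $\M\con E_{i-1}$.

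On the other side, $\gamma_N(F)=\sum_{j=1}^s|F\cap\supp{N_j}|=\sum_{j=1}^s(r-|F\cap E_j|)\geq\sum_j c_j=\ell$. Equality holds iff each $F\cap E_j$ is a basis of $E_j$. So the focal bases are exactly the bases adapted to the flag $E_1\subseteq\cdots\subseteq E_s$, and $\cm{\gamma_N}|_{A_{i-1}}=(\M|_{E_i})\con E_{i-1}$ follows with no exchange argument.

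This flag-of-flats proof differs from the paper's, which never identifies the summand matroids. The paper proves the sum formula directly on monomials, starting from $J(\cm{\gamma_N})=J^{(\ell+1)}:N$ (\cref{Critical-Matroid-Cover-Ideal}). It compares each $N'\in G(J^{(\ell+1)})$ with $N$ at the first index where the supports differ, using \cref{Corr-Matroid-LCM} and the Structure Theorem. It then reads off the summand identity from the disjointness of the supports of the summands.
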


\begin{proof} We begin with the first equality.\\
``$\subseteq"$ By \cref{Critical-Matroid-Cover-Ideal} we know that $J(\cm{\gamma_N}) = J^{(\ell+1)} : N$. Let $N' \in G(J^{(\ell+1)})$ with $N' \neq N$. Write $N' = N'_1\cdots N'_t$, as in the Structure \cref{MatroidSymPowerThm}. Since $N \neq N'$, there exists a minimal index $1 \leq k \leq s+1$ such that $\supp{N'_{k}} \neq \supp{N_{k}}$. 
To prove the desired inclusion, we show that $N':N\in \sfp_{c_k + 1}(J|_{\supp{N_{k-1}}}) : N_k$.

By \cref{Corr-Matroid-LCM} $\LCM(N'_k,N_k) \in \sfp_{c_k +1}(J)$. Consider the following monomial $N''$, where we replace  the term $N_k$ in $N$ with $\LCM(N'_k,N_k)$, 
$$N'' := (N_1 \cdots N_{k-1})\LCM(N'_k,N_k) (N_{k+1}\cdots N_s) \in J^{(\ell+1)}.$$ 

By choice of $k$, and the nesting of the supports, we have $\supp{\LCM(N_k',N_k)} \subseteq \supp{N_{k-1}}$. Hence the displayed expression for $N''$ is a squarefree monomial decomposition for $N''$. We now apply \cref{Symbolic-Type-Divisibility} to $N''$, by viewing it in $J^{(\ell+1)}$ 
to obtain a minimal generator $L = L_1\cdots L_s\in G(\symp{J})$ with type $(c_1,...,c_k +1, ...,c_s-1)$ such that $L_i \; | \; N''_i$ for all $i$. In particular, $L_{k-1} \mid N''_{k-1} =N_{k-1}$ and they are both minimal generators of $\sfp_{c_{k-1}}(J)$ hence $L_{k-1}= N_{k-1}$. Then we have $\supp{L_k} \subseteq \supp{L_{k-1}} = \supp{N_{k-1}}$. By \cref{Ideal-Restriction-Structure}(3) $L_k \in \sfp_{c_k + 1}(J)|_{\supp{N_{k-1}}} = \sfp_{c_k + 1}(J|_{\supp{N_{k-1}}})$. Now, $L_k \; | \;  \LCM(N'_k,N_k)$, and note that $\LCM(N'_k,N_k) : N_k = N'_k : N_k$. Hence we deduce the following divisibility chain
$$(L_k : N_k) \; | \; (N'_k : N_k) \; | \; (N' : N).
$$ 
We conclude that $N': N$ is divisible by $L_k : N_k$, which is in $\sfp_{c_k + 1}(J|_{\supp{N_{k-1}}}) : N_k$.

``$\supseteq$" Let $L\in G(\sfp_{c_i + 1}(J|_{\supp{N_{i-1}}}) : N_i)$, we show $LN \in J^{(\ell+1)}$. Note that the ideals in the sum summation are pairwise disjoint, because 
	\bee\label{supp}
	\supp{\sfp_{c_i+1}(J|_{\supp{ N_{i-1}}}) : N_i} = \supp{N_{i-1}} \setminus \supp{N_i}.
	\eee 
	By \cref{Ideal-Restriction-Structure}(3) $LN_i \in \sfp_{c_i+1}(J|_{\supp{N_{i-1}}}) =  \sfp_{c_i+1}(J)|_{\supp{N_{i-1}}}$, so by the structure \cref{MatroidSymPowerThm} the monomial $LN = N_1\cdots (LN_i) \cdots N_s\in G(J^{(\ell+1)})$.
	
	Finally, from the first part of the statement and equality (\ref{supp}), we have 
$$\begin{array}{ll}
J(\cm{\gamma_N}|_{A_{i-1}}) & =	J(\cm{\gamma_N})|_{\supp{N_{i-1}} - \supp{N_i}}\\
& = \left(\sfp_{c_i+1}(J|_{\supp{N_{i-1}}}) : N_i\right)|_{\supp{N_{i-1}} - \supp{N_i}}\\
& =\sfp_{c_i+1}(J|_{\supp{N_{i-1}}}) : N_i. 
\end{array}$$

Note that for the first equality where we commute taking cover ideal and restriction we are using \cref{Rmk-2.15} and the decomposition of $\cm{\gamma_N}$ of \Cref{Critical-Matroid-Structure}.
\end{proof}

\begin{Example}\label{Example-Focal-Matroid}
Let $\M$ be a matroid and $J=J(\M)$. We provide an example of using \cref{Critical-Matroid-Cover-Ideal-Decomposition} to describe $\cm{\gamma_N}$, for any $N\in G(J)$.   We will use the notation for $A_i$ in \cref{Critical-Matroid-Cover-Ideal-Decomposition}, and note that $s = 1$. We have the following description of the cover ideals of the matroids $\cm {\gamma_N} = \ccm {\gamma_N} \oplus \zcm {\gamma_N}$:

\begin{enumerate}
	\item $J(\zcm{\gamma_N})$ is the term where $i=1$, which is $\sfp_2(J) : N$;
	\item $J(\ccm{\gamma_N})$ is the sum of the terms for $1 < i \leq s+1$. In this case, because $s=1$, there is only one term, which is $J|_{\supp{N}} =(N)$; 
	\item Hence, $J(\cm{\gamma_N}) =J(\zcm{\gamma_N})+J(\ccm{\gamma_N}) = (\sfp_2(J) : N,\,N)$.
\end{enumerate} 
    
\end{Example}

We will see in the next section that, under an appropriate ordering, the colon ideals appearing in the iterated mapping cones of a \Cmatroid ideal have the form   $J(\zcm{\gamma_N})$, for some matroid $\M$.

\section{Iterated Contractions and Iterated Mapping Cones}\label{Section-Ordering}
In this section we introduce and study orderings allowing us to minimally resolve $J(\M)$ by iterated mapping cones. \Cref{Con-Ordering} provides a large number of different orderings. For any choice of a basis $B$ of the matroid $\M$, any choice of an order $v_1,\ldots,v_c$ on the vertices of $B$, any choices of orders on the sets of co-circuits of $\M\con (v_1,\ldots,v_i)$ which are not co-circuits of $\M\con (v_1,\ldots,v_{i+1})$, we obtain an ordering on $G(J)$ which we can use to minimally resolve $J$ by iterated mapping cones. 

We first recall the meaning of ``resolving ideals by iterated mapping cones".
\begin{Construction}(See also \cite[Construction~27.3]{Pe11}) Given an ideal $I\subseteq R$, fix an ordered, finite generating set $N_1,\ldots,N_t$ of $I$. 
For every $1 \leq j \leq t$, we construct a projective resolution $\FF_j$ for $I_j := (N_1,...,N_j)$ by fixing any projective resolution of $R/(I_{j-1}:N_j)$, taking the inductively constructed projective resolution of $R/I_{j-1}$, and applying the Mapping Cone (with respect to these two resolutions) to the short exact sequence 
$$0 \to R/(I_{j-1} : N_j) \xrightarrow{\cdot{N_j}} R/(I_{j-1}) \to R/I_j \to 0.$$
We say $\FF:=\FF_t$ is a {\em projective resolution by iterated mapping cones} of $R/I_t=R/I$.
\end{Construction}

Clearly, any ideal can be resolved by iterated mapping cones. However, in the homogeneous (or local) case, the resolution obtained may not be minimal, even if one employs a minimal projective resolution of $R/(I_{j-1}:N_j)$ for every $j$. The Taylor Resolution is a well--known example of a resolution by iterated mapping cones, and it is rarely minimal. To our knowledge, the majority of ideals that are known to be minimally resolved by iterated mapping cones are {\em ideals with linear quotients}. Recall that a monomial ideal $I \sub R$ has {\em linear quotients} if there exists an ordering $N_1,\ldots,N_t$ on $G(I)$ such that every  colon ideal $I_{j-1} : N_j$ is generated by a subset of variables. \cite[Lemma~1.5]{HT02} along with  \cite[Lemma~2.1]{JZ10} proved that these ideals are minimally resolved by mapping cones. Dochtermann and Mohammadi proved that, if additionally $I$ has a regular decomposition function, then the minimal resolution by iterated mapping cones is cellular \cite{DM14}.
\begin{Example}
The ideals (i)--(vi) in the Introduction have linear quotients.
\end{Example} 

Since symbolic powers of ideals associated to uniform matroids have linear quotients (\cite{Ma20}), one may wonder if \Cmatroid ideals, and, even more optimistically, their symbolic powers, have linear quotients. The answer is a resounding {\em no}, e.g. the Fano plane gives a negative example for any $\ell\geq 1$. 
For $\ell=1$, in fact, we show that, if $J(\M)$ has linear quotients, then $\M$ must be a uniform matroid.

\begin{Proposition} Let $\M$ be a matroid on $[n]$ of rank $r$ with no loops and assume $J(\M)$ has linear quotients. Then $\M = U_{r,n}$.
\end{Proposition}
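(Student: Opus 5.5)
The plan is to translate linear quotients into a combinatorial condition on hyperplanes, and then show that this condition forces every hyperplane of $\M$ to be independent. Since $\M$ has no loops, every hyperplane being independent means every circuit has size $r+1$, which gives $\M=U_{r,n}$.

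\textbf{Step 1 (translation).} By \cref{Basic-Matroid-Properties}(2),(5), $G(J(\M))=\{x_{H\dual}\mid H \text{ a hyperplane}\}$. Fix a linear-quotient ordering $x_{H_1\dual},\ldots,x_{H_t\dual}$. For monomial ideals, the colon $(x_{H_1\dual},\ldots,x_{H_{j-1}\dual}):x_{H_j\dual}$ is generated by the monomials $x_{H_i\dual - H_j\dual}=x_{H_j-H_i}$ with $i<j$. So the colon is generated by variables if and only if the following holds: for every $i<j$ there is $k<j$ with $H_j-H_k=\{a\}$ for a single element $a\in H_j-H_i$. When $H_j-H_k=\{a\}$, the set $H_j-a=H_j\cap H_k$ is an intersection of flats, hence a flat. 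Conversely, if $H_j-a$ is a flat, then $a$ is a coloop of $\M|_{H_j}$. Write $H_j=L_j\sqcup K_j$, where $L_j$ is the set of coloops of $\M|_{H_j}$ and $K_j$ is the union of the circuits contained in $H_j$. The condition therefore implies:
\begin{center}
$(\ast)$ for every $i<j$ we have $L_j\not\subseteq H_i$.
\end{center}
Moreover, $H_j$ is independent if and only if $K_j=\emptyset$.

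\textbf{Step 2 (extremal choice).} Suppose some hyperplane is dependent, and let $H=H_j$ be the dependent hyperplane of largest index $j$. Then every $H_m$ with $m>j$ is independent, so $H_m=L_m$ has exactly $r-1$ elements. Put $L=L_j$ and $K=K_j\neq\emptyset$. Then $r(L)=r(H)-r(K)=r-1-r(K)\le r-2$, since $K$ is loopless and nonempty. Among the rank-$(r-2)$ flats $F$ with $L\subseteq F\subseteq H$, I try to choose one contained in a second hyperplane $H'\neq H$ with $H'$ preceding $H$. Such an $H'$ contains $L$, which contradicts $(\ast)$. Every such $F$ lies in at least two hyperplanes, namely $H$ and $\mathrm{cl}(F\cup e)$ for any $e\notin H$ (such $e$ exists and is not a loop). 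Hence a failure means the following: for every admissible $F$, every hyperplane through $F$ other than $H$ comes after $H$. In that case all those hyperplanes are independent, so every admissible $F$ is independent.

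\textbf{Step 3 (main obstacle).} It remains to rule out that every rank-$(r-2)$ flat $F$ with $L\subseteq F\subseteq H$ is independent. If $r(K)\ge 2$, or if $K$ is not a single circuit, I expect to find a set $S\subseteq K$ of rank $r(K)-1$ containing a circuit. Then $F=\mathrm{cl}(L\cup S)$ is dependent, and we are done.

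The delicate case is when $K=Z$ is a single circuit. Here I would pick $a,b\in Z$ and take $F=L\cup(Z-\{a,b\})$, which is independent of rank $r-2$. Then I look at $H''=\mathrm{cl}(F\cup e)$ for $e\notin H$, which is independent by assumption, and compare it with $\mathrm{cl}(F\cup a)=H$. Using circuit elimination on $Z$ together with a circuit through $e$ and $F\cup\{a\}$, I expect to produce either a dependent hyperplane after $H$ or a violation of $(\ast)$ for some $H''$ after $H$ whose $L_{H''}=H''$ is contained in an earlier hyperplane.

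This case analysis is where I expect the real work to be. If it resists, my fallback is to strengthen the extremal choice in Step 2, for instance by choosing $H$ dependent with $|K_j|$ minimal among the dependent hyperplanes of maximal index.
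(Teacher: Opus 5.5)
Your translation of linear quotients into condition $(\ast)$ in Step~1 is correct, and the reduction in Step~2 is sound. Step~3, however, cannot be completed: the proposition is false as stated, so no extremal choice of a dependent hyperplane will produce a contradiction.

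Let $\M$ be the rank-$2$ matroid on $\{1,2,3,4\}$ with parallel classes $\{1,2\},\{3\},\{4\}$. Its bases are all $2$-subsets except $\{1,2\}$, so it is loopless, connected, and not uniform. Its hyperplanes are $\{1,2\},\{3\},\{4\}$, so $J(\M)=(x_3x_4,\,x_1x_2x_4,\,x_1x_2x_3)$. In this order, $(x_3x_4):x_1x_2x_4=(x_3)$ and $(x_3x_4,x_1x_2x_4):x_1x_2x_3=(x_4)$, so $J(\M)$ has linear quotients. An even smaller example is $U_{1,1}\oplus U_{1,2}$, with $J=(x_1,x_2x_3)$.

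Run your Step~2 on the first example: it gives $H=H_1=\{1,2\}$, $L=\emptyset$, $K=Z=\{1,2\}$. The only admissible flat is $F=\emptyset$. The other hyperplanes through it, $\{3\}$ and $\{4\}$, come after $H$, are independent, and satisfy $(\ast)$. So neither outcome you hope for in the delicate case occurs, and your fallback is vacuous because there is only one dependent hyperplane. The structural reason is that $(\ast)$ only compares a hyperplane with its predecessors, so a dependent hyperplane placed first is never penalized.

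Your non-delicate branch fails too. Take the rank-$3$ matroid consisting of a $4$-point line $H=\{1,2,3,4\}$ and two further points $5,6$ in general position. Order the generators as follows: first $x_{H\dual}=x_5x_6$; then $x_{H'\dual}$ for $H'=\{i,5\}$, $1\le i\le 4$; then $H'=\{i,6\}$, $1\le i\le 4$; then $H'=\{5,6\}$. One checks that every colon is generated by variables. Yet $\M|_K\cong U_{2,4}$ has rank $2$ and is not a single circuit, and it has no dependent subset of rank $1$, so no dependent admissible $F$ exists.

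The paper's own proof does not escape this. It asserts that, since $R/J(\M)$ is level, linear quotients force a linear resolution. That implication is false. In the first example, $R/J(\M)$ has minimal resolution $0\to R(-4)^2\to R(-2)\oplus R(-3)^2\to R$, which is level but not linear. And $R/(x_1,x_2x_3)$ is even a complete intersection.

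What is true is the statement with ``linear resolution'' (for instance, ``equigenerated with linear quotients'') in place of ``linear quotients''. Then $J(\M)$ is generated in degree $\reg(R/J(\M))+1=n-r+1$ by \cref{Regularity}. So every cocircuit has $n-r+1$ elements, which forces $\M\dual=U_{n-r,n}$ and hence $\M=U_{r,n}$.

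So your argument is missing an extra hypothesis, not a better case analysis. Equigeneration, i.e.\ all hyperplanes having the same size, is what excludes the examples above, where the dependent hyperplane is strictly larger than the independent ones.
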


\begin{proof} By \cite{Stanley} $R/J(\M)$ is a level algebra. Hence, having linear quotient implies  having a linear resolution, which implies that $J(\M)$ is equigenerated. Since $J(\M)$ is Cohen--Macaulay, and, by \cite{MT2} or \cref{Regularity} $\reg(R/J(\M)) = n - r$, $J(\M)$ is equigenerated in degree $n-r+1$. The generators of $J(\M)$ correspond to cocircuits of $\M$, hence all cocircuits of $\M$ have size $n-r+1$. It quickly follows that $\M = U_{r,n}$.
\end{proof}

We would like to make a stronger statement that if $\symp{J(\M)}$ has linear quotients for some $\ell \geq 2$, then $\M$ must be uniform. It is clear that the proof above fails in this case, since $R/\symp{J(\M)}$ is no longer a level algebra. We leave this question to the reader.
\begin{Question} Let $\M$ be a matroid with no loops. If $\symp{J(\M)}$ has linear quotients for some $\ell \geq 2$, then must $\M$ be uniform?
    
\end{Question}

Despite the above, in the next section we prove that \Cmatroid ideals $J$ are minimally resolved by mapping cones. We extend this result to all symbolic powers $J^{(\ell)}$ in our sequel paper. As one normally does for ideals with linear quotients, we first need to carefully choose a linear ordering of the minimal generators. In the case of linear quotients, there is a way to adjust the ordering so that the resolution by mapping cones is guaranteed to be minimal (see \cite[Lemma~1.5]{HT02} and \cite[Lemma~2.1]{JZ10}). In stark contrast to this case, after choosing an ordering, we still need to prove the minimality of the resolution by iterated mapping cones. To overcome this obstacle, we will establish the connection appearing in \Cref{All-Degrees-Show-Up}. 

First, we introduce some notation.

\begin{Notation}\label{Def-Iterated-Contraction}
Let $\M$ be a matroid with $J=J(\M)$, and let $(v_1,...,v_s) \sub [n]$ be an {\em ordered} subset of vertices. 
To stress the relevance of the order (for future use), we write $\M \con (v_1,...,v_s):=\M \con \{v_1,...,v_s\}$, which equals the iterated contraction $(((\M \con v_1) \con v_2) ... \con v_s)$. Then we use the notation $J \con (v_1,...,v_s) :=J(\M \con (v_1,...,v_s))$.
\end{Notation}

Note that we keep track of the ordering of the vertices for the purpose of defining a total order on $G(J(\M))$. 
We isolate the situation that is most relevant to us.
\begin{Remark}\label{Rmk-filtration-on-G(J)}
Let $\M$ be a matroid of rank $c$ with $J = J(\M)$, and let $(v_1,..,v_c)$ be a basis of $\M$. Set $J_i := J\con(v_1,...,v_{i})$ for $1 \leq i \leq c$. Since $(v_1,\ldots,v_c)$ is independent, then $\h(J\con(v_1,...,v_{i}))=c-i$. We use the filtration
$$J_c=(0)\subseteq J_{c-1} \subseteq J_{c-2} \cdots \subseteq J_i \subseteq \cdots \subseteq J_1 \subseteq J_0:=J,$$ 
to induce the following partition of $G(J)$
$$
G(J)= \bigsqcup_{i=0}^{c-1}(G(J_i) - G(J_{i+1})).
$$
\end{Remark}

\begin{Construction}(Ordering on $G(J(\M))$ by iterated contractions)\label{Con-Ordering}
Let $\M$ be a matroid of rank $c$, let $(v_1,..,v_c)$ be an ordered basis of $\M$, and set $J_0:=J=J(\M)$, $J_i := J\con(v_1,...,v_{i})$. Choose any order on $G(J_i)-G(J_{i+1})$. 
For any $N\in G(J)$, set
\begin{center} 
	$i_N:=$ the index $i$ such that $N\in G(J_i) - G(J_{i+1})$. 
\end{center}
	We set the following total order on $G(J)$:
\begin{center}
 $N_1\stineq N_2$ in $G(J)$\;\;  $\Llra$\;\;  $i_{N_1}>i_{N_2}$, \;\;  or \;\;  $i_{N_1}=i_{N_2}$ and $N_1<N_2$ in $G(J_{i_{N_1}}) - G(J_{(i_{N_1}+1)})$.
\end{center}

In what follows, whenever we write $G(J)=\{N_1,\ldots,N_t\}$ we automatically assume $N_1 \stineq N_2 \stineq \ldots \stineq N_t$.
\end{Construction}

 We will provide an example  describing the ideal of each contraction.
Before the example, for any $N\in G(J)$, \cref{Partition-By-Contraction} provide us with an explicit description of the index $i_N$ of \cref{Con-Ordering}. 

\begin{Remark}\label{Remark-On-Ordering} 
	Let $(v_1,\ldots,v_c)$ be an ordered basis of $\M$. For any $N\in G(J)$,
$$
i_N=\min\{ i\in [c] \,\mid\, x_{v_i} \in \supp{N} \}-1.
 $$   
	In particular, $i_N=0$ $\Llra$ $N \in G(J \div v_1)=G(J) - G(J\con v_1)$, and $i_N = c-1 \Llra N = \min G(J)$.
\end{Remark}

\begin{Example}\label{Example-ordering}
Consider the $C$-matroidal ideal $J = (x_1x_2,x_1x_3,x_1x_4x_5,x_2x_3,x_2x_4x_5,x_3x_4x_5) \subseteq \kk[x_1,\ldots,x_5]$. We choose $(v_1,v_2,v_3) = (1,2,3)$, then a possible ordering on $G(J)$ as in \cref{Con-Ordering} is  $$\underbrace{\underbrace{x_3x_4x_5}_{J_2 =J/(v_1,v_2)} \stineq x_2x_3 \stineq  x_2x_4x_5}_{J_1=J/(v_1)} \stineq \underbrace{x_1x_2 \stineq x_1x_3 \stineq x_1x_4x_5}_{J\div v_1}.$$ 
For $N_1 = x_2x_4x_5$, its index is $i_{N_1} = 1$. For $N_2 = x_3x_4x_5$, its index is $i_{N_2} = 2$, and it is the only monomial with this index.
\end{Example}

As mentioned, one of the important steps to resolve the cover ideal of a matroid by iterated mapping cones is describing the resolution of the colon ideals. Unfortunately, in contrast with the case of linear quotients, our colon ideals are rarely linear ideals. However, \Cref{Corr-Colon-Are-Matroid} shows that, under any of our orderings, the colon ideals are {\em C-matroidal ideals}. So, one could say that we prove that \Cmatroid ideals have ``\Cmatroidal quotients". This opens up a recursive structure that we will crucially utilize.
We first set up some notation.

\begin{Notation}\label{Notation-Colon} If $J=J(\M)$ for some matroid $\M$ and $N \in G(J)$, we write 
	$$
J_{\ineq N} = (N' \in G(J) \, \mid\, N' \ineq N ).	
	$$
Also, 	if $N \neq \min\, G(J)$, we write $$J_{\stineq N} = (N' \in G(J)\,\mid\, N' \stineq N) \qquad \text{ and }\qquad\col{N} := J_{\stineq N} : N.$$
\end{Notation}

The fact that each colon ideal under the ordering of \cref{Con-Ordering} is \Cmatroid is a consequence of yet another characterization of matroids. This characterization is easily obtained by translating the circuit axioms in terms of the colon operation on monomial ideals.

\begin{Proposition}\label{Colon-Circuits-Equivalence} Let $I$ be the cover ideal of a simplicial complex on vertex set $\vs$, and let $\mathcal{C} = \{ \supp{\gamma_N} : N \in G(I) \}$. For $v \in V$ define $I \con {v} = \{ N \in G(I) : x_v \; \nmid \;  N \}$ and $I\div {v} = \{ N \in G(I) : x_v \; |\;  N \}$. Then the following are equivalent:

\begin{enumerate}
    \item $\mathcal{C}$ is a set of circuits of a matroid,
    \item for all $v\in \vs$, and any $N_1\neq N_2 \in I\div{v}$, we have $N_1 : N_2 \in (I\con{v} : N_2)$.
\end{enumerate}
\end{Proposition}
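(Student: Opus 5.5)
The plan is to translate the circuit axioms directly into monomial language, term by term. Write each $N\in G(I)$ as $N=x_C$ with $C=\supp{\gamma_N}$; this is possible because a cover ideal $I=\bigcap_F \p_F$ is squarefree. Recall that a family $\mathcal C$ of subsets of $\vs$ is the set of circuits of a matroid if and only if three axioms hold:
\begin{enumerate}[(C1)]
\item $\emptyset\notin\mathcal C$;
\item no member of $\mathcal C$ properly contains another;
\item (circuit elimination) for $C_1\neq C_2$ in $\mathcal C$ and $v\in C_1\cap C_2$, there is $C_3\in\mathcal C$ with $C_3\subseteq (C_1\cup C_2)-v$.
\end{enumerate}
In the statement, $I\con v$ and $I\div v$ are read as the monomial ideals generated by the displayed sets of generators, as in \cref{Notation-Div}.

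First I would dispose of (C1) and (C2), which hold automatically for $\mathcal C$. Axiom (C2) holds because $G(I)$ is a minimal generating set of squarefree monomials: if $C_1\subsetneq C_2$, then $x_{C_1}\mid x_{C_2}$, contradicting minimality. Axiom (C1) holds as long as $I$ is a proper ideal, i.e.\ $1\notin G(I)$. This is the case whenever the complex is nonempty; if it is empty, $I=R$ and $\mathcal C=\{\emptyset\}$, and I would either exclude this degenerate case or note that it is implicitly excluded. So the equivalence reduces to showing that (C3) is equivalent to condition (2).

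Next comes the key computation. Take $N_1=x_{C_1}$ and $N_2=x_{C_2}$ in $I\div v$ with $N_1\neq N_2$, i.e.\ $v\in C_1\cap C_2$. For squarefree monomials, $N_1:N_2=N_1/\GCD(N_1,N_2)=x_{C_1-C_2}$. Since $I\con v$ is a monomial ideal, membership in the colon gives
$$x_{C_1-C_2}\in (I\con v):x_{C_2}\iff x_{C_1\cup C_2}=x_{C_1-C_2}\,x_{C_2}\in I\con v.$$
A squarefree monomial lies in the monomial ideal $I\con v$ exactly when some generator $N_3\in G(I)$ with $x_v\nmid N_3$ divides it. Writing $N_3=x_{C_3}$, this means there is $C_3\in\mathcal C$ with $v\notin C_3$ and $C_3\subseteq C_1\cup C_2$, i.e.\ $C_3\subseteq (C_1\cup C_2)-v$.

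Finally, pairs $C_1\neq C_2$ in $\mathcal C$ with $v\in C_1\cap C_2$ correspond exactly to pairs $N_1\neq N_2$ in $I\div v$. Hence condition (2), quantified over all $v\in\vs$ and all such pairs, is verbatim circuit elimination (C3). Combined with the automatic validity of (C1) and (C2), this proves (1)$\Llra$(2). I expect no real obstacle. The only points needing care are the identification $N_1:N_2=x_{C_1-C_2}$ for squarefree monomials, reading the colon in (2) as an ideal colon so that the membership criterion for monomial ideals applies, and the degenerate case $I=R$.
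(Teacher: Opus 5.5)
Your proposal is correct and is essentially the paper's argument: the paper gives no separate proof, only the remark that the characterization follows by translating the circuit axioms through the colon operation. Your computation
\[
N_1:N_2=x_{C_1-C_2}\in (I\con v):x_{C_2}\iff x_{C_1\cup C_2}\in I\con v
\]
is that translation of circuit elimination, with (C1) and (C2) automatic from the minimality of $G(I)$. Your caveat about the degenerate case is also right: for the void complex $I=R$, condition (2) holds vacuously, but $\mathcal C=\{\emptyset\}$ is not the set of circuits of a matroid, so that case must be excluded.
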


\begin{Corollary}\label{Corr-Colon-Are-Matroid} 
 Let $J$ be the cover ideal of a matroid $\M$ of rank $c$. Order $G(J)$ by iterated contraction along $(v_1,...,v_{c})$ as in \cref{Con-Ordering}, then for any $N \in G(J)$ with $N\neq \min G(J)$, $\col{N} = (J\con(v_1,...,v_{i_N+1})) : N$. In particular, $\col{N}$ is  a \Cmatroidal ideal.
\end{Corollary}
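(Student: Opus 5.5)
The plan is to compute $J_{\stineq N}$ explicitly from \cref{Con-Ordering}. Then we show that the extra generators of the same index contribute nothing new to the colon. For this we use the circuit-type characterization \cref{Colon-Circuits-Equivalence} applied to the contracted matroid $\M\con(v_1,\ldots,v_{i})$.

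Set $i:=i_N$. By \cref{Remark-On-Ordering}, $x_{v_{i+1}}$ divides $N$, while none of $x_{v_1},\ldots,x_{v_i}$ does. Since $N\neq\min G(J)$, we have $i<c-1$. By \cref{Con-Ordering}, the generators $N'\stineq N$ are of two kinds:
\begin{itemize}
\item those with $i_{N'}>i$, which by \cref{Rmk-filtration-on-G(J)} are exactly the elements of $G(J_{i+1})$;
\item those with $i_{N'}=i$ that precede $N$ in the chosen order on $G(J_i)-G(J_{i+1})$.
\end{itemize}
Hence
$$J_{\stineq N}=J_{i+1}+\bigl(N'\in G(J_i)-G(J_{i+1})\,\mid\,N'\stineq N\bigr),$$
and therefore
$$\col{N}=(J_{i+1}:N)+\bigl(N':N\,\mid\,N'\in G(J_i)-G(J_{i+1}),\ N'\stineq N\bigr).$$
It remains to show that each $N':N$ in the second summand already lies in $J_{i+1}:N$.

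For this, work with the matroid $\M':=\M\con(v_1,\ldots,v_i)$, whose cover ideal is $J_i$. The vertex $v_{i+1}$ is independent in $\M'$, because $(v_1,\ldots,v_c)$ is a basis. By \cref{Partition-By-Contraction} applied to $\M'$ and $v_{i+1}$, we get
$$G(J_{i+1})=\{L\in G(J_i)\,:\,x_{v_{i+1}}\nmid L\}\quad\text{and}\quad G(J_i)-G(J_{i+1})=\{L\in G(J_i)\,:\,x_{v_{i+1}}\mid L\}.$$
These are precisely the sets $J_i\con v_{i+1}$ and $J_i\div v_{i+1}$ in the notation of \cref{Colon-Circuits-Equivalence}. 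The supports of $G(J_i)$ are the cocircuits of $\M'$ (\cref{Basic-Matroid-Properties}(2)), i.e. the circuits of the matroid $(\M')\dual$. So condition (1) of \cref{Colon-Circuits-Equivalence} holds, and condition (2) gives
$$N':N\in (J_{i+1}:N)\quad\text{for every } N'\neq N \text{ in } G(J_i)-G(J_{i+1}).$$
This proves $\col{N}=J_{i+1}:N=(J\con(v_1,\ldots,v_{i_N+1})):N$.

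For the ``in particular'', note that $N\notin J_{i+1}$. Indeed, $G(J_{i+1})\subseteq G(J)$ by \cref{Partition-By-Contraction}. So if $N\in J_{i+1}$, some element of $G(J_{i+1})$ would divide $N\in G(J)$, hence equal $N$; but $N\notin G(J_{i+1})$ since $x_{v_{i+1}}\mid N$. Since $J_{i+1}$ is the cover ideal of the matroid $\M\con(v_1,\ldots,v_{i+1})$, \cref{Colon-Matroid-Structure} shows that $J_{i+1}:N$ is \Cmatroid.

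The main point to verify carefully is the matching of \cref{Colon-Circuits-Equivalence} to the contracted matroid. That proposition is phrased for the cover ideal of a simplicial complex whose generator supports form the circuits of a matroid. So one must check that the vertex set, and the identification of $J_i\con v_{i+1}$ with $J_{i+1}$ (rather than with a cover ideal computed in a different ambient ring), are consistent. The second bullet of \cref{Partition-By-Contraction} is exactly what secures this identification.
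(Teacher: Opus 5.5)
Your proof is correct and follows the paper's route. The paper's proof is one line: it cites \cref{Colon-Circuits-Equivalence}(2) and \cref{Colon-Matroid-Structure}, plus an induction that passes to the contracted matroid $\M\con(v_1,\ldots,v_{i_N})$. You carry out that reduction directly, applying the circuit-elimination criterion to $J_{i_N}$ and the vertex $v_{i_N+1}$, and you also check that $N\notin J_{i_N+1}$ before invoking \cref{Colon-Matroid-Structure}, a detail the paper leaves implicit.
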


\begin{proof} This result follows by \cref{Colon-Circuits-Equivalence}(2),  \cref{Colon-Matroid-Structure}, and induction.
\end{proof}

We are now ready to combine a number of results and connect back to \cmname matroids. We show that the colon ideals in  the ordering of $G(J(\M))$ in \cref{Con-Ordering}  are the cover ideals of the cofocal matroids of $\M$.

\begin{Theorem}\label{Colons-Are-CocriticalMatroids} 
Let $\M$ be a matroid. Order $G(J(\M))$ as in  \cref{Con-Ordering}. Let $N \in G(J(\M))$. Then, with $i_N$ as in \cref{Con-Ordering}, we have 
$$
   C_N = \begin{cases}
 J(\zcm{\gamma_N}), & \text{ if }i_N=0\\
 &\\
 J(\M'({\gamma_N})^{0}), & \text{ if }i_N>0, \;\text{ where }\M' = \M \con (v_1,...,v_{i_N}).
\end{cases}$$
Note that, for any $N$, we can always find an ordering so that $i_N = 0$.
\end{Theorem}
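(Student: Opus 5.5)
Our plan is to reduce the statement to the case $i_N=0$ and then identify $C_N$ through \cref{Corr-Colon-Are-Matroid} and the structure results on focal matroids. For the reduction, let $i=i_N$ and $\M'=\M\con(v_1,\ldots,v_i)$, $J'=J(\M')$. By \cref{Partition-By-Contraction}, $G(J')$ consists exactly of those $N'\in G(J)$ with $\supp{N'}\cap\{v_1,\ldots,v_i\}=\emptyset$. By \cref{Remark-On-Ordering}, every $N'\stineq N$ has $i_{N'}\geq i$, so $N'\in G(J')$. Moreover, the ordering on $G(J')$ induced by the ordered basis $(v_{i+1},\ldots,v_c)$ of $\M'$ is the restriction of $\ineq$, and in it $N$ has index $0$. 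Hence $C_N$ computed in $J$ equals $C_N$ computed in $J'$. Also $N\in G(J')$ is a basic $1$-cover of $\M'$. So it suffices to prove $C_N=J(\zcm{\gamma_N})$ when $i_N=0$, i.e.\ $x_{v_1}\mid N$. The final remark is immediate: extend $\{v\}$, for any $v\in\supp{N}$ that is not a loop, to a basis ordered with $v_1=v$.

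Now assume $i_N=0$. By \cref{Corr-Colon-Are-Matroid}, $C_N=J(\M\con v_1):N$. On the other hand, by \cref{Example-Focal-Matroid} (i.e.\ \cref{Critical-Matroid-Cover-Ideal-Decomposition} with $s=1$), $J(\zcm{\gamma_N})=\sfp_2(J):N$. By \cref{Colon-Skeleton-Commute}(2) with $\ell=2$ and $v=v_1\in\supp{N}$,
$$\sfp_2(J):N=\sfp_1(J(\M\con v_1):N)=J(\M\con v_1):N,$$
where the last equality holds because $J(\M\con v_1):N$ is squarefree. This is a C-matroidal ideal by \cref{Colon-Matroid-Structure}, once we check that $N\notin J(\M\con v_1)$. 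Chaining the two identities gives $C_N=J(\zcm{\gamma_N})$.

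The main obstacle is checking the hypotheses, especially in \cref{Colon-Skeleton-Commute}(2). That step uses \cref{Bijection-Along-Cover}(2) to identify $\M|_{A\dual}$ with $(\M\con v)|_{A\dual}$, which requires $v\in A=\supp{N}$; here this holds since $x_{v_1}\mid N$. Its proof also uses $r(\M|_{A\dual})=r(\M)-1$, which holds because $N$ is a cocircuit. Similarly, the nonmembership $N\notin J(\M\con v_1)$ follows from \cref{Partition-By-Contraction}, since $x_{v_1}\mid N$.

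If \cref{Colon-Skeleton-Commute}(2) turns out not to apply cleanly, we would fall back on a direct comparison of primes. Both sides are cover ideals of matroids on $\supp{N}\dual$, so we would compare bases. The bases of $\zcm{\gamma_N}$ are the sets $F-h$ with $F$ a basis of $\M$, $|F\cap\supp{N}|=1=\{h\}$ (\cref{Bijection-Along-Cover}). The bases of $(\M\con v_1)\setminus\supp{N}$ are the bases of $\M\con v_1$ of maximal intersection with $\supp{N}\dual$, namely size $r-1$ by \cref{Bijection-Along-Cover}(2). Taking $h=v_1$ shows that these two collections coincide.
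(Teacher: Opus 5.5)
Your proposal is correct and follows essentially the same route as the paper: reduce to the case $i_N=0$ by passing to $\M'=\M\con(v_1,\ldots,v_{i_N})$ with the induced ordering, then chain $C_N=(J\con v_1):N=\sfp_2(J):N=J(\zcm{\gamma_N})$ using \cref{Corr-Colon-Are-Matroid}, \cref{Colon-Skeleton-Commute}(2) with $\ell=2$, and \cref{Example-Focal-Matroid}(1). Your hypothesis checks (that $x_{v_1}\mid N$, $N\notin J\con v_1$, and that the induced order on $G(J(\M'))$ is the restriction of $\ineq$) fill in details the paper leaves implicit, and the fallback comparison of bases is not needed.
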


\begin{proof}
Suppose $i_N = 0$, then by \cref{Corr-Colon-Are-Matroid} the colon ideal $\col{N}= (J \con v_1) : N$. Now by \cref{Colon-Skeleton-Commute} $(J \con v_1) : N = \sfp_2(J):N$. Finally, by \cref{Example-Focal-Matroid}(1) $\sfp_2(J):N = J(\zcm{\gamma_N})$, hence $\col{N}= J(\zcm{\gamma_N})$.

In the case where $i_N > 0$, using the induced ordering on $G(J(\M')) \subseteq G(J(\M))$, we may replace $\M$ by $\M'$, to assume $i_N=0$, then we are done by the above paragraph. \end{proof}

The next result is a key technical ingredient in proving minimality of resolution by iterated mapping cones of \Cmatroid ideals. 

\begin{Proposition}\label{Min-Squarefree-Necc}\label{Min-squarefree-Stronger} Let $J$ be the cover ideal of a matroid $\M$. Let $N \in G(J)$, let $J\setminus N$ be the monomial ideal generated by $G(J)-\{N\}$. 
	If $L\in G((\symp{(J\setminus N) : N)})$, then $LN\in G(J^{(\ell+1)})$.
\end{Proposition}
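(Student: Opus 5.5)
The plan is to identify the colon ideal $(J\setminus N):N$ with the cover ideal of the cofocal matroid $\zcm{\gamma_N}$. Then the statement reduces to the ``furthermore'' part of \cref{Critical-Matroid-Cover-Ideal} with $\ell=1$, applied to $N\in G(J)=G(J^{(1)})$ with $k$ equal to the $\ell$ of the present statement. If $G(J)=\{N\}$ the colon is $(0)$ and there is nothing to prove, so assume $G(J)\neq\{N\}$.

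\emph{Step 1: $(J\setminus N):N=\sfp_2(J):N$.}
For ``$\subseteq$'', take $M\in G(J)$ with $M\neq N$. Then $\supp{M}\neq\supp{N}$, so $\LCM(M,N)\in\sfp_2(J)$ by \cref{Corr-Matroid-LCM}. Moreover $M:N=\LCM(M,N):N$, which lies in $\sfp_2(J):N$.
For ``$\supseteq$'', a generator of $\sfp_2(J)$ has the form $\LCM(M_1,M_2)$ with $M_1\neq M_2$ in $G(J)$, again by \cref{Corr-Matroid-LCM}. At least one of $M_1,M_2$ differs from $N$, so $\LCM(M_1,M_2)\in J\setminus N$. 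Hence $\sfp_2(J)\subseteq J\setminus N$, and therefore $\sfp_2(J):N\subseteq (J\setminus N):N$.
By \cref{Example-Focal-Matroid}(1), we conclude $(J\setminus N):N=J(\zcm{\gamma_N})$.

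\emph{Step 2: pass from $\zcm{\gamma_N}$ to $\cm{\gamma_N}$.}
Let $L\in G(J(\zcm{\gamma_N})^{(\ell)})$, so $\gamma_L$ is a basic $\ell$-cover of $\zcm{\gamma_N}$ and $\supp{L}\subseteq \supp{\gamma_N}\dual$. By \cref{Critical-Matroid-Structure}, $\cm{\gamma_N}=\zcm{\gamma_N}\oplus\ccm{\gamma_N}$, so every basis of $\cm{\gamma_N}$ has the form $F_0\sqcup F_+$ with $F_0\in\B(\zcm{\gamma_N})$ and $F_+\in\B(\ccm{\gamma_N})$.
\begin{itemize}
\item Since $L$ is supported on $\supp{\gamma_N}\dual$, we get $\gamma_L(F_0\sqcup F_+)=\gamma_L(F_0)\geq \ell$, so $\gamma_L$ is an $\ell$-cover of $\cm{\gamma_N}$.
\item For each $i\in\supp{L}$ there is an $F_0\ni i$ with $\gamma_L(F_0)=\ell$. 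Completing $F_0$ by any $F_+$ gives a basis of $\cm{\gamma_N}$ on which $\gamma_L$ equals $\ell$ and which contains $i$.
\end{itemize}
By \cref{Sufficient-Basic-Cover}(2), this shows $L\in G(J(\cm{\gamma_N})^{(\ell)})$.

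\emph{Step 3: conclude.}
Apply the ``furthermore'' statement of \cref{Critical-Matroid-Cover-Ideal} to $N\in G(J^{(1)})$ with $k=\ell$. It gives $LN\in G(J^{(1+\ell)})$, as desired.

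The main point to check carefully is Step 1, specifically that every squarefree generator of $\sfp_2(J)$ lies in $J\setminus N$ and not merely in $J$; this is exactly where the description of $\sfp_2$ as LCMs of two distinct generators is used. Step 2 is routine bookkeeping with the direct sum decomposition. It relies on \cref{Critical-Matroid-Cover-Ideal} being read with $J(\cm{\gamma_N}^{(k)})$ meaning $J(\cm{\gamma_N})^{(k)}$.
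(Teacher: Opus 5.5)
\textbf{Verdict: there is a genuine gap at Step 3, and it cannot be closed, because the statement is false for $\ell\ge 2$.}

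Steps 1 and 2 are correct. Step 1 is a more self-contained route than the paper's. The paper gets $(J\setminus N):N=J(\zcm{\gamma_N})$ by choosing an ordering with $i_N=0$ and $N=\max G(J)$, and then invoking \cref{Colons-Are-CocriticalMatroids}. You instead derive $(J\setminus N):N=\sfp_2(J):N$ directly from \cref{Corr-Matroid-LCM}.

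From Step 2 on, your argument is the paper's, and the paper's proof ends the same way. The problem is that the ``furthermore'' part of \cref{Critical-Matroid-Cover-Ideal} holds only for $k=1$. Its proof checks the required order of $LN$ only on focal bases. On a non-focal basis $F$ one only knows $\gamma_N(F)\geq 2$, while $\gamma_L(F)$ can be $0$. So $\gamma_{LN}(F)\geq \ell+1$ is not guaranteed once $\ell\geq 2$. For $k\ge 2$ one only has the inclusion $J^{(1+k)}:N\subseteq J(\cm{\gamma_N})^{(k)}$, and it can be strict.

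Here is a counterexample. Take $\M=U_{2,3}$, $J=(x_1x_2,x_1x_3,x_2x_3)$ and $N=x_1x_2$. Then $(J\setminus N):N=(x_3)=J(\zcm{\gamma_N})$. So $L=x_3^2\in G(((J\setminus N):N)^{(2)})$, and also $L\in G(J(\cm{\gamma_N})^{(2)})$, as your Step 2 predicts. But $LN=x_1x_2x_3^2$ has order $2$ at $\p_{\{1,2\}}$, and $\{1,2\}$ is a non-focal basis. Hence $LN\notin J^{(3)}$.

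What your argument does prove is the case $\ell=1$. It also works for squarefree $L$, which is all \cref{Min-Square-Free} uses, once you add the missing estimate on non-focal bases:
\begin{itemize}
\item Let $A:=\supp{N}\dual$. This is a hyperplane, and $\zcm{\gamma_N}=\M|_A$ by \cref{Bijection-Along-Cover}(2).
\item A squarefree $L=x_S$ that is an $\ell$-cover of $\M|_A$ satisfies $r(A-S)\leq r(A)-\ell=r(\M)-1-\ell$. To see this, extend a maximal independent subset of $A-S$ to a basis of $\M|_A$.
\item Hence every basis $F$ of $\M$ satisfies $\gamma_{LN}(F)=|F|-|F\cap(A-S)|\geq \ell+1$, so $LN\in J^{(\ell+1)}$.
\item Minimality of $LN$ then follows as in your Step 2. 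Use focal bases $F_0\sqcup F_+$ with $\gamma_L(F_0)=\ell$. When the given vertex lies in $\supp{N}$, choose $F_+$ containing it, which is possible by \cref{Critical-Primes-Cover}(1).
\end{itemize}
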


\begin{proof} By \cref{Colons-Are-CocriticalMatroids} we can choose an ordering on $G(J)$ so that $C_N = (J\setminus N):N = J(\zcm{\gamma_N})$. By \cref{Critical-Matroid-Structure} $\zcm{\gamma_N}$ is a direct summand of $\cm{\gamma_N}$, hence by \cref{Rmk-2.15}, \cref{Ideal-Restriction-Structure}(3), and \cref{Ideal-Restriction-Structure}(2) $G(J(\zcm{\gamma_N}))^{(\ell)} \subseteq G(J(\cm{\gamma_N})^{(\ell)}$. Hence $L \in G(J(\cm{\gamma_N})^{(\ell)}$, so the result follows by \cref{Critical-Matroid-Cover-Ideal} \end{proof}

We will now derive a special case of the above result that is more directly applicable for our proofs in the upcoming sections.

\begin{Corollary}\label{Min-Square-Free} Let $J$ be the cover ideal of a matroid $\M$, and let $v$ be an independent vertex of $\M$. 
Let $N \in G(J \div v)$. For any $L \in \sfp_{\ell}((J\con v) : N))$, we have $LN \in G(\sfp_{\ell+1}(J))$.
\end{Corollary}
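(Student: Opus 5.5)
The plan is to reduce the statement to \cref{Min-squarefree-Stronger}, working on the squarefree level throughout. The key input is that, with an ordering as in \cref{Con-Ordering} along an ordered basis $(v,v_2,\ldots,v_c)$ starting with $v$, we have $i_N=0$ for $N\in G(J\div v)$. In that case \cref{Colons-Are-CocriticalMatroids} together with \cref{Colon-Skeleton-Commute}(2) gives
$$(J\con v):N=\sfp_2(J):N=J(\zcm{\gamma_N}).$$
This first step requires extending $\{v\}$ to a basis, which is possible since $v$ is independent.

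Next I compare $(J\con v):N$ with $(J\setminus N):N$. Since $G(J\con v)\subseteq G(J)-\{N\}$, we get $(J\con v):N\subseteq (J\setminus N):N$. The reverse inclusion is exactly the content of \cref{Corr-Colon-Are-Matroid}: the colon $C_N$ equals $(J\con v):N$ for an ordering placing $N$ last. I choose the order on $G(J)-G(J\con v)$ so that $N$ is maximal. Then $J_{\stineq N}=J\setminus N$, and \cref{Corr-Colon-Are-Matroid} yields $(J\setminus N):N=(J\con v):N=J(\zcm{\gamma_N})$.

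With this identification, take $L\in \sfp_\ell((J\con v):N)$; by minimality of generators we may assume $L\in G(\sfp_\ell(J(\zcm{\gamma_N})))$. There is a subtlety here: the statement says $L\in\sfp_\ell$ rather than $L\in G(\sfp_\ell)$. If $L$ is not minimal, then $LN$ need not be minimal, so I read the statement as concerning generators $L$. Since $L$ is squarefree and minimal in $J(\zcm{\gamma_N})^{(\ell)}$, it is a minimal generator of the symbolic power: any $L'\in G(J(\zcm{\gamma_N})^{(\ell)})$ dividing $L$ is squarefree, hence lies in $\sfp_\ell$, and so $L'=L$. Applying \cref{Min-squarefree-Stronger}, $LN\in G(J^{(\ell+1)})$.

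Finally I show squarefreeness. The support of $J(\zcm{\gamma_N})$ lies in $\supp{\gamma_N}\dual$, which is disjoint from $\supp{N}$. Hence $LN$ is squarefree, so $LN\in G(\sfp_{\ell+1}(J))$, because a squarefree minimal generator of $J^{(\ell+1)}$ is a minimal generator of its squarefree part.

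The main obstacle is the identification in the second paragraph: the ordering must simultaneously have $i_N=0$ and make $J_{\stineq N}$ equal to all of $J\setminus N$. \cref{Con-Ordering} permits both, since the order within $G(J)-G(J\con v)$ is arbitrary. It remains to check that \cref{Corr-Colon-Are-Matroid} genuinely asserts $C_N=(J\con v):N$, independently of which other elements of $G(J\div v)$ precede $N$. If that check fails, I would instead verify $N':N\in (J\con v):N$ directly for $N'\in G(J\div v)$ via \cref{Colon-Circuits-Equivalence}(2).
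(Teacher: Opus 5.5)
Your proposal is correct and is essentially the paper's proof. You order $G(J)$ as in \cref{Con-Ordering} along a basis beginning with $v$ and with $N=\max G(J)$, so that $(J\setminus N):N=\col{N}=(J\con v):N$, and then apply \cref{Min-squarefree-Stronger}; your extra checks (reading $L$ as a minimal generator, passing from $G(\sfp_\ell)$ to $G$ of the symbolic power, and squarefreeness of $LN$ via disjoint supports) fill in details the paper leaves implicit.

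One caution, which makes your squarefree reading essential rather than cosmetic: \cref{Min-squarefree-Stronger} as stated fails for non-squarefree $L$ once $\ell\geq 2$. For $J=J(U_{2,3})$, $N=x_1x_2$ and $\ell=2$, one has $(J\setminus N):N=(x_3)$ and $L=x_3^2$, yet $x_1x_2x_3^2\notin J^{(3)}$. So it is better to justify the squarefree instance directly.

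\begin{itemize}
\item Write $N=x_C$ with $H=[n]-C$ a hyperplane. Then $(J\con v):N=J(\zcm{\gamma_N})=J(\M|_H)$ by \cref{Bijection-Along-Cover}(2).
\item Let $L=x_S$ with $S\subseteq H$. Minimality of $L$ in $\sfp_\ell(J(\M|_H))=J((\M|_H)^{[r(\M)-\ell]})$ gives $r(H-S)=r(\M)-\ell-1$ and $r((H-S)\cup\{i\})=r(\M)-\ell$ for every $i\in S$.
\item The same rank equality holds for $i\in C$, because $i\notin H=\mathrm{cl}(H)\supseteq \mathrm{cl}(H-S)$.
\item These are exactly the conditions for $x_{S\cup C}$ to be a minimal generator of $\sfp_{\ell+1}(J)=J(\M^{[r(\M)-\ell]})$.
\end{itemize}
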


\begin{proof} We can choose an ordering so that $N=\max G(J)$ so then
$(J \setminus N) : N = \col{N} =(J\con v) : N$. The result then follows directly from \cref{Min-Squarefree-Necc}.
\end{proof}

\section{Resolution of \Cmatroidal Ideals and Formulas for Homological Invariants}\label{Section-Resolution}

Let $J$ be the cover ideal of a matroid $\M$. We will show that in the ordering of \cref{Con-Ordering}, the iterated mapping cones produces a minimal free resolution of $J$. In the last section, we have described the structure of the colon ideals $\col{N} = J_{\stineq N} : N $. Under the ordering of \cref{Con-Ordering}, each $\col{N}$ are themselves \Cmatroid ideals. 

In general, to prove the minimality of the resolution, one usually needs a very careful and lengthy investigation of the comparison maps of the mapping cones. However, a trivial consequence of the main result of this section shows that, in this setting, these extensive computations are not necessary.  Indeed, we prove that the multigraded Betti numbers are supported in the squarefree parts of the symbolic powers $J$, hence minimality follows automatically. Even more, this fact leads to yet another characterization of matroids. \Cmatroid ideals are precisely the squarefree monomial ideals $J$ whose multigraded Betti numbers are supported in the squarefree parts of the symbolic powers of $J$. This provides an interesting characterization of matroids in terms of graded free resolutions and symbolic powers.

\begin{Notation}\label{Notation-Iterated-Cone-Res} 
For any monomial ideal $J$, we write $\mdeg(J)$ for the set of multidegrees of $G(J)$.\\
For a multigraded free module $F$, we denote the shift of $F$ by a monomial $N$ by $F\shift{N}$. We also write  $\mdeg(F)$ for the set of the multidegrees of its minimal generators. 

For any squarefree monomial ideal $J$ with a total ordering $\ineq$ on $G(J)$, and $N\in G(J)$, 
\begin{itemize}
	\item we let $J_{\ineq N}$ and $\col{N}$ be as in \cref{Notation-Colon};
	\item we write $\colres{N}_{\bullet}$ for a minimal multigraded free resolution of $R/\col{N}$;
	\item  we write $\idealres{N}_{\bullet}$ for the resolution of $J_{\ineq N}$ by iterated mapping cones;
	\item $\col{N}$ is not defined $\Llra$ $N = \min \,G(J)$, in which case we set $\colres{N}_{\bullet}$ to be the minimal multigraded free resolution of $R/(N)$.\\  When we write $\col{N}$, we will implicitly assume that $N > \min \,G(J)$.
\end{itemize}	
\end{Notation}
If $N = \max\, G(J)$, then $\idealres{N}_\bullet$ is a resolution of $J_{\ineq N} = J$ by iterated mapping cones. 

We now state a preliminary result about the shifts that are produced through iterated mapping cones. We omit the proof, as it readily follows from an induction argument.
\begin{Proposition}\label{Iterated-Cone-Shifts}
Let $J$ be any monomial ideal with an ordering $\ineq$ on $G(J)$. With notation as above, we set $N'' = \min G(J)$. Then by iterated mapping cones we have for $h \geq 1$, 
$$\idealres{N}_h = \colres{N''}_h \oplus \bigoplus_{\substack{ N' \in G(I) \\ N'' < N' \ineq N}} \colres{N'}_{h-1}\shift{N'} \quad \text{and}\quad \mdeg(\idealres{N}_h) = \{N''\} \cup \bigcup_{\substack{ N' \in G(I) \\ N'' < N' \ineq N}}\mdeg(\colres{N'}_{h-1}\shift{N'}).$$
\end{Proposition}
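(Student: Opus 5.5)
The plan is to argue by induction along the chosen total order on $G(J)=\{N_1\stineq N_2\stineq\cdots\stineq N_t\}$, with $N''=N_1=\min G(J)$, unwinding the mapping cone construction one generator at a time. The base case is $N=N_1$. There $J_{\ineq N_1}=(N_1)$ and, by the convention in \cref{Notation-Iterated-Cone-Res}, $\idealres{N_1}_\bullet=\colres{N_1}_\bullet$ is the minimal resolution of $R/(N_1)$. Its only nonzero module in positive homological degree is $R\shift{N_1}$ in degree $1$. So the formula reduces to $\idealres{N_1}_h=\colres{N''}_h$, and $\mdeg(\idealres{N_1}_1)=\{N''\}$. (In higher degrees the right-hand side should be read with the convention that $\colres{N''}_h=0$ for $h\geq 2$, so the displayed set $\{N''\}$ really only appears for $h=1$; I would note this reading explicitly.)

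For the inductive step, let $N=N_j$ with $j\geq 2$ and write $N^-:=N_{j-1}$, so that $J_{\stineq N}=J_{\ineq N^-}$. The mapping cone is applied to the short exact sequence
$$0\to R/\col{N}\xrightarrow{\cdot N} R/J_{\stineq N}\to R/J_{\ineq N}\to 0.$$
The multiplication map is homogeneous only after shifting the source by $N$, so the source complex is $\colres{N}_\bullet\shift{N}$. Lifting multiplication by $N$ to a comparison map $\colres{N}_\bullet\shift{N}\to \idealres{N^-}_\bullet$, the mapping cone has $h$-th module
$$\idealres{N}_h=\idealres{N^-}_h\oplus \colres{N}_{h-1}\shift{N}.$$
Substituting the inductive hypothesis for $\idealres{N^-}_h$ gives the claimed direct sum over $N''<N'\ineq N$.

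The statement about multidegrees then follows by taking $\mdeg$ of both sides. The set of multidegrees of a direct sum of free modules is the union of the sets for each summand, and $\mdeg(F\shift{N'})=N'\cdot\mdeg(F)$; this is just the notation $\mdeg(\colres{N'}_{h-1}\shift{N'})$. I expect no real obstacle here, only bookkeeping of the indices. The one point needing care is matching the homological-degree conventions between resolutions of $R/(\cdot)$ and of the ideal, i.e.\ the shift by one in the index $h-1$. Minimality is not claimed, so no analysis of the comparison maps is required.
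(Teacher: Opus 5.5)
Your proposal is correct and is exactly the induction the paper has in mind (the paper omits the proof, saying it ``readily follows from an induction argument''): lifting multiplication by $N$ and taking the mapping cone adds the shifted $(h-1)$-st module of the colon resolution to the previous $h$-th module, and unwinding this along the order gives the formula. Your caveat is also accurate: the resolution of $R/(N'')$ vanishes in homological degrees $\geq 2$, so the set $\{N''\}$ in the multidegree formula belongs there only for $h=1$, and the displayed statement is literally correct only under that reading.
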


We can now state the main result of this section.  

\begin{Theorem}\label{All-Degrees-Show-Up} 
Let $J$ be any \Cmatroid ideal, and $\res_{\bullet}$ any multigraded minimal free resolution of $R/J$. Then $G(\sfp_h(J)) = \mdeg(\res_h) $.

\end{Theorem}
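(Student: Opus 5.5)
We argue by induction on the rank $c=\h\,J$ of the matroid (equivalently on the number of variables), working with $J=J(\M)$ and the resolution $\idealres{N}_\bullet$ by iterated mapping cones with respect to an ordering of \cref{Con-Ordering} along an ordered basis $(v_1,\ldots,v_c)$. We first show that $\mdeg(\IMCres_h)=G(\sfp_h(J))$ for this particular resolution $\IMCres:=\idealres{N}$ with $N=\max G(J)$. Minimality then follows at once. Two generators in consecutive homological degrees $h$ and $h+1$ lie in $G(\sfp_h(J))$ and $G(\sfp_{h+1}(J))$ respectively, and no monomial is a minimal generator of both, since $\sfp_{h+1}(J)\subseteq \sfp_h(J)$ and a minimal generator of $\sfp_{h+1}(J)$ is strictly divisible by some element of $G(\sfp_h(J))$. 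Hence no differential can carry a unit entry, so $\IMCres$ is minimal. Since any two minimal multigraded resolutions are isomorphic, the statement then holds for every $\res_\bullet$. Homological degree $0$ (and $1$, where $\mdeg=G(J)$) is a trivial base case.

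\textbf{Containment $\mdeg(\IMCres_h)\subseteq G(\sfp_h(J))$.} By \cref{Iterated-Cone-Shifts}, every shift in $\IMCres_h$ with $h\geq 2$ has the form $L\cdot N'$, where $L\in\mdeg(\colres{N'}_{h-1})$. By \cref{Corr-Colon-Are-Matroid} and \cref{Colons-Are-CocriticalMatroids}, $\col{N'}$ is the cover ideal of a matroid on fewer vertices, namely a cofocal matroid of $\M\con(v_1,\ldots,v_{i_{N'}})$. Applying the induction hypothesis to $\col{N'}$ gives $L\in G(\sfp_{h-1}(\col{N'}))$. Writing $\col{N'}=(J\con(v_1,\ldots,v_{i_{N'}+1})):N'$ and using \cref{Min-Square-Free} in the matroid $\M\con(v_1,\ldots,v_{i_{N'}})$ with $v=v_{i_{N'}+1}$, we get $LN'\in G(\sfp_h(J\con(v_1,\ldots,v_{i_{N'}})))$. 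By \cref{Sym-Powers-Contraction} together with \cref{Ideal-Restriction-Structure}(3), this set is contained in $G(\sfp_h(J))$.

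\textbf{Containment $G(\sfp_h(J))\subseteq\mdeg(\IMCres_h)$.} This is the step we expect to be the main obstacle. Take $M\in G(\sfp_h(J))$. If $x_{v_1}\nmid M$, then $M\in G(\sfp_h(J\con v_1))$ by \cref{Sym-Powers-Contraction}. The generators of $J\con v_1$ form an initial segment of the ordering, so the iterated cone for $J\con v_1$ sits inside $\IMCres$, and the induction hypothesis on the smaller rank gives $M\in\mdeg(\IMCres_h)$.

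Otherwise $x_{v_1}\mid M$. By \cref{Corr-Matroid-LCM}, $M=\LCM(M_1,\ldots,M_h)$ for a non-redundant chain of elements of $G(J)$. We would like to choose a factor $N'\in G(J\div v_1)$ with $N'\mid M$, set $L:=M/N'$, and show that $L\in G(\sfp_{h-1}((J\con v_1):N'))=G(\sfp_{h-1}(\col{N'}))$. The natural choice is $N'$ the $\ineq$-maximal element of $G(J)$ dividing $M$; it lies in $J\div v_1$. One checks that $L\in \sfp_{h-1}(J\con v_1):N'$, using \cref{Colon-Skeleton-Commute}(2), which gives $\sfp_h(J):N'=\sfp_{h-1}((J\con v_1):N')$. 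Minimality of $L$ then follows from the minimality of $M$ together with the ``furthermore'' part of \cref{Critical-Matroid-Cover-Ideal}. Induction applied to $\col{N'}$ yields $L\in\mdeg(\colres{N'}_{h-1})$, hence $M=LN'\in\mdeg(\IMCres_h)$.

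The delicate point is verifying $M:N'=M/N'$ and that $L$ is a minimal generator, rather than merely an element, of $\sfp_{h-1}(\col{N'})$. If the maximal choice of $N'$ fails, we would instead use \cref{Colons-Are-CocriticalMatroids} to reorder so that $i_{N'}=0$ for a suitable $N'\mid M$ with $x_{v_1}\mid N'$. This is legitimate because the multiset of shifts does not depend on the ordering once minimality is established for each ordering.
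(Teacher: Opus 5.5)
Your proof is correct. For the inclusion $\mdeg(\res_h)\subseteq G(\sfp_h(J))$ and the minimality of the iterated cone, it is essentially the paper's argument in \cref{Matroid-Resolution}. The genuine difference is in the reverse inclusion $G(\sfp_h(J))\subseteq\mdeg(\res_h)$.

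\textbf{How the two routes differ.} The paper does not go back to the mapping cone for this inclusion. It first settles $h=\h\,J$: Cohen--Macaulayness gives $\res_{\h J}\neq 0$, and $\sfp_{\h J}(J)$ is principal. It then reduces $h<\h\,J$ to this top case via the Herzog--Hibi--Zheng restriction formula $\beta_{h,M}(R/J)=\beta_{h,M}(R/J|_{\supp{M}})$, since $J|_{\supp{M}}$ is again $C$-matroidal of height $h$. You instead show that each $M\in G(\sfp_h(J))$ is actually produced by the cone. If $x_{v_1}\mid M$, you write $M=L\cdot N'$ with $N'\in G(J\div v_1)$, and combine $\col{N'}=(J\con v_1):N'$, \cref{Colon-Skeleton-Commute}(2) and induction. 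If $x_{v_1}\nmid M$, you pass to the sub-cone on the initial segment $G(J\con v_1)$.

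\textbf{What each route buys.} Yours stays entirely inside the paper's machinery, needs neither Cohen--Macaulayness nor the external restriction lemma, and tells you which colon ideal contributes each shift. The paper's is shorter and yields the Betti number formula of \cref{Betti-Formula} as a byproduct.

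\textbf{Small points to tidy.}
\begin{itemize}
\item In the case $x_{v_1}\nmid M$, say explicitly that the sub-cone for $J\con v_1$ is minimal before invoking the induction hypothesis. This holds because its shifts form a subset of those of $\IMCres$, so consecutive homological degrees still have disjoint multidegrees.
\item The strict divisibility you use for minimality deserves one line. If $M\in G(\sfp_{h+1}(J))$ and $x_i\mid M$, then $M/x_i$ has order at least $h$ on every basis, so $M/x_i\in\sfp_h(J)$.
\item Your ``delicate point'' is not delicate. $M:N'=M/N'$ simply because $N'\mid M$.
\item Minimality of $L=M/N'$ follows directly from minimality of $M$, not from the ``furthermore'' part of \cref{Critical-Matroid-Cover-Ideal}. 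If $L'\mid L$ with $L'\in\sfp_h(J):N'$, then $L'N'\in\sfp_h(J)$ divides $M$, which forces $L'=L$.
\item This works for any $N'\in G(J\div v_1)$ dividing $M$. Such an $N'$ exists because $M$ is an LCM of elements of $G(J)$ and $x_{v_1}\mid M$. So neither the $\ineq$-maximal choice nor the reordering fallback is needed.
\end{itemize}
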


The above result may be stated as follows: For any $h$ and any \Cmatroid ideal $J$, $\sfp_h(J)=\HSIdeal{h-1}{J}$, where $\HSIdeal{h-1}{J}$ is the $(h-1)$-th homological shift ideal introduced by Herzog et al. \cite{HMRZ21}. 
In \cite{HMRZ21} the authors ask for precise conditions ensuring the equality $\HSIdeal{h}{J}= \HSIdeal{1}{\HSIdeal{h-1}{J}}$ holds. They note that there are monomial ideals for which even the inclusion $\HSIdeal{h}{J}\subseteq \HSIdeal{1}{\HSIdeal{h-1}{J}}$ fails. 
They prove that inclusion holds for ideals with linear quotient, but even in this case they have examples where equality fails.

In contrast with the above, it follows immediately from \Cref{All-Degrees-Show-Up} and \Cref{Sqfree-parts}(2) that, for \Cmatroid ideals $J$, the equality $\HSIdeal{h}{J}=\HSIdeal{1}{\HSIdeal{h-1}{J}}$ holds.

A generalization of \Cref{All-Degrees-Show-Up} to the symbolic powers of $J$ will appear in our forthcoming paper \cite{MN2}. A large part of the proof of \Cref{All-Degrees-Show-Up} follows from the following theorem.

\begin{Theorem}\label{Matroid-Resolution} Let $J$ be the cover ideal of a matroid. We order $J$ using \cref{Con-Ordering} with any basis. Then, with \cref{Notation-Iterated-Cone-Res}, we have $$\mdeg(\colres{N}_{h-1}\shift{N}) \subseteq G(\sfp_{h}(J)).$$ 
	It follows by \cref{Iterated-Cone-Shifts}, that for any $N \in G(J)$, $\mdeg(\idealres{N}_h) \subseteq G(\sfp_h(J))$.\\
	In particular, the iterated mapping cones gives a minimal free resolution of $J$. 
\end{Theorem}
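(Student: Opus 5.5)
We argue by induction on the rank $c=r(\M)$, equivalently on $\h\,J$, proving simultaneously that the iterated mapping cone resolution of $J$ in the order of \cref{Con-Ordering} is minimal. Then $\mdeg$ of any minimal resolution of a \Cmatroid ideal is controlled by the theorem for smaller ranks.

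Fix $N\in G(J)$ with $N\neq\min G(J)$. By \cref{Colons-Are-CocriticalMatroids} (and \cref{Corr-Colon-Are-Matroid}) we have $\col{N}=(J\con(v_1,\ldots,v_{i_N+1})):N$. Set $\M'=\M\con(v_1,\ldots,v_{i_N})$ and $J'=J(\M')$. Then $N\in G(J'\div v_{i_N+1})$ and $\col{N}=(J'\con v_{i_N+1}):N=J(\M'(\gamma_N)^0)$. This is a \Cmatroid ideal of height $\h J'-1<c$ by \cref{Critical-Matroid-Height}. By the inductive hypothesis, in the form of \cref{All-Degrees-Show-Up} for smaller height, any minimal multigraded resolution $\colres{N}$ of $R/\col{N}$ satisfies $\mdeg(\colres{N}_{h-1})=G(\sfp_{h-1}(\col{N}))$. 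In homological degree $0$ the only shift is $1$, which gives $\mdeg(\colres{N}_0\shift{N})=\{N\}\subseteq G(\sfp_1(J))$. For $h\geq 2$ we take $L\in G(\sfp_{h-1}(\col{N}))$. Applying \cref{Min-Square-Free} to the matroid $\M'$ with $v=v_{i_N+1}$ gives $LN\in G(\sfp_h(J'))$. Finally, \cref{Sym-Powers-Contraction} gives $G(\sfp_h(J'))\subseteq G(\sfp_h(J))$; this uses that squarefree parts of $J^{(h)}$ are generated by the squarefree minimal generators of $J^{(h)}$, which follows from \cref{Sqfree-parts}(1) applied to the contraction. The case $N=\min G(J)$ is trivial, since its resolution is $R\to R(-N)$.

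Combining these, \cref{Iterated-Cone-Shifts} yields $\mdeg(\idealres{N}_h)\subseteq G(\sfp_h(J))$ for every $N$, and in particular for the full resolution.

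For minimality, we need that no differential in the mapping cone resolution has a unit entry between summands with equal multidegree in adjacent homological degrees. We use the following observation. A minimal generator $M$ of $\sfp_h(J)$ cannot equal a minimal generator of $\sfp_{h+1}(J)$. Indeed, $\sfp_{h+1}(J)\subseteq\sfp_h(J)$, so equality would force $M\in\sfp_{h+1}(J)$ with $M$ also minimal there. But a squarefree $M$ lies in $J^{(h+1)}$ only if $\gamma_M(F)\geq h+1$ for all bases $F$, whereas minimality in $J^{(h)}$ gives a basis $F$ with $\gamma_M(F)=h$ by \cref{Sufficient-Basic-Cover}(2). Hence shifts in degrees $h$ and $h+1$ are disjoint sets of monomials. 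In a multigraded complex, any nonzero scalar entry of a differential must map between equal multidegrees, so every entry lies in $\m$. The complex is also exact, since it is a mapping cone resolution, and therefore it is minimal.

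The main obstacle is making the induction well-founded and correctly invoking \cref{Min-Square-Free} inside the contracted matroid $\M'$ rather than $\M$. One has to check that $v_{i_N+1}$ is independent in $\M'$, that $N\in G(J'\div v_{i_N+1})$ by \cref{Remark-On-Ordering}, and that the squarefree-part generators transfer from $J'$ to $J$. The last point is handled by \cref{Ideal-Restriction-Structure}(3),(6).
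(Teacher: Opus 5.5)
Your proposal is correct and is essentially the paper's proof: induction on $\h\,J$, identifying $\col{N}$ through \cref{Corr-Colon-Are-Matroid} as a $C$-matroidal colon of smaller height, and lifting each $L$ to $LN$ via \cref{Min-Square-Free} and \cref{Sym-Powers-Contraction}. The differences are minor: you treat each $N$ directly at its contraction level $i_N$, whereas the paper splits $G(J)=G(J\con v_1)\sqcup G(J\div v_1)$ and applies the induction to $J\con v_1$ as a whole; you also spell out the minimality argument the paper leaves implicit; and from the inductive hypothesis you only need the inclusion $\mdeg(\colres{N}_{h-1})\subseteq G(\sfp_{h-1}(\col{N}))$ (from minimality at smaller height), not the full equality of \cref{All-Degrees-Show-Up}, whose proof relies on this theorem.
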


\begin{proof}We proceed by induction on $c = \h\, J$. The base case $c = 1$ is trivial, since $J$ is principal.

Let $c > 1$, and let $v$ denote the first vertex in the chosen basis for the ordering in \cref{Con-Ordering}. 
Recall that in the ordering, $G(J\con v) \stineq G(J\div v)$.
First, let $N\in G(J\con v)$. Since $J\con v$ is \Cmatroidal of height $c - 1$, then $\mdeg(\colres{N}_{h-1}\shift{N}) \subseteq G(\sfp_h(J\con v)) \subseteq G(\sfp_h(J))$, where the first inclusion holds by induction, and the second one by \cref{Sym-Powers-Contraction}.

Now, let $N \in J\div v$. By \cref{Corr-Colon-Are-Matroid} $\col{N} = (J\con v) : N$, so it is \Cmatroidal with $\h\, \col{N} = c-1$. Write $\mathbb I^{\col{N}}_\bullet$ for the resolution of $\col{N}$ by iterated mapping cones. We apply induction on $\col{N} = (J\con v) : N$ to see that $\mdeg(\mathbb I^{\col{N}}_{h-1}) \sub \mdeg(\sfp_{h-1}((J\con v) : N))$. By \cref{Iterated-Cone-Shifts}, it remains to prove that for any $L \in G(\sfp_{h-1}((J\con v) : N))$,  one has $LN \in G(\sfp_h(J))$, but this follows from \cref{Min-Square-Free}. \end{proof}

After recording the following observation, we can prove \cref{All-Degrees-Show-Up}. Recall that for any homogeneous ideal $J$ of $R$ and any $i,j\in \NN_0$, the $(i,j)$-{\em graded Betti number} of $R/J$ is $\beta_{i,j}(R/J):=\dim_\kk[{\rm Tor}_i^R(R/J,\kk)]_j$. Similarly, if $J$ is $\ZZ^n$-graded, for any monomial $N\in R$, the $N$-{\em multigraded Betti number} $\beta_{i,N}(R/J):=\dim_\kk[{\rm Tor}_i^R(R/J,\kk)]_N$, i.e. it is the dimension of the $\kk$-vector space spanned by the graded elements of multidegree $N$ in ${\rm Tor}_i^R(R/J, \kk)$.

\begin{Remark}\label{Betti-Formula}
	Let $J$ be \Cmatroid with $\h\, J = c$. Let $N$ be a monomial representing a non-zero multigraded Betti number of $R/J$, then from \cite[Lemma 4.4]{Herzog-Hibi-Zheng-04} $$\beta_{h,N}(R/J) =\beta_{h,N}(R/(J|_{\supp{N}})).$$ Consequently, we have the following formula for the graded Betti numbers of $R/J$ 
	$$\beta_{h,j}(R/J) = \sum_{\substack{N \in G(\sfp_h(J)) \\ \deg(N) = j }}\beta_{h,j}(R/(J|_{\supp{N}})).$$
\end{Remark}

\begin{proof}(\cref{All-Degrees-Show-Up})
``$\supseteq$" holds by \cref{Matroid-Resolution} along with \cref{Iterated-Cone-Shifts}. \\
``$\subseteq$" The inclusion holds for $h = \h\, J$, because $\mdeg(\res_{\h\,J}) \neq \emptyset$ and, by \cite[Cor~3.21]{MN26}, the ideal $\sfp_{\h\, J}(J)$ is principal.  We may assume $h < \h\, J$, and let $N \in G(\sfp_h(J))$. We need to show that the multigraded Betti number $\beta_{h,N}(R/J) \neq 0$. By \cref{Betti-Formula}, $\beta_{h,N}(R/J) = \beta_{h,N}(R/J|_{\supp{N}})$. Since $N\in G(\sfp_{h}(J))$, then $J|_{\supp{N}}$ is a \Cmatroid ideal of height $h$, by \cref{Ideal-Restriction-Structure}(5). Therefore, $\beta_{h,N}(R/J|_{\supp{N}})$ is nonzero by the previous argument, since now $h = \h \, J|_{\supp{N}}$. \end{proof}

We remark that the formula in \cref{Betti-Formula} is recursive. This is because for any $N \in \sfp_h(J)$, $J|_{\supp{N}}$ is again a \Cmatroid ideal. The top Betti number of $J|_{\supp{N}}$ is concentrated in a single degree and can be computed by taking alternating sums of lower Betti numbers. 

Since $R/J$ is minimally resolvable by iterated mapping cones, the Betti numbers of $R/J$ can be obtained from the Betti numbers of the colon ideals which, by \cref{Colons-Are-CocriticalMatroids}, are the ``0-th" summand of $J(\cm{\gamma_N})$.

A different formula for the Betti numbers of $R/I_{\M}$ was already known to Stanley in \cite[Thm~9]{Stanley}.

\begin{Remark} 
The proof of \cref{Matroid-Resolution} shows that at each step the resolution of $J_{\ineq N}$ by iterated mapping cones is minimal. So in particular we obtain minimal resolutions of {\em truncations} of \Cmatroid ideals, i.e. ideals that are equal to $J_{\ineq N}$ for some \Cmatroid ideal $J$, with an ordering as in \cref{Con-Ordering}. 
For instance, consider the ideal $J$ of \Cref{Example-ordering}
$$
J = (x_1x_2, x_1x_3, x_2x_3, x_1x_4, x_2x_4, x_3x_4),
$$ 
where the ordering is done by contracting along $(4,3,2)$. Then, the ideal $I = (x_1x_2, x_1x_3, x_2x_3,$ $ x_1x_4)$ is a truncation of $J$. Hence by the above we know the minimal graded free resolution of $I$.
In particular, for $I$, the multi-degrees that appear in $\res_2$ are $\{x_1x_2x_3, x_1x_3x_4, x_2x_3x_4\}$. These are all minimal generators of $J^{(2)}$. However, only $x_1x_2x_3$ is a minimal generators of $I^{(2)}$.  We point out that $I$ fails to be a \Cmatroid ideal. 
This motivates the following characterization of matroids.

\end{Remark}

\begin{Theorem}\label{Char-Matroid-100}
Let $J$ be a squarefree monomial ideal of $\h(J)\geq 2$. TFAE:
	\begin{enumerate}[(a)]
		\item $J$ is a \Cmatroid ideal;
		\item $\mdeg(\mathbb{F}_\ell) =G(\sfp_\ell(J))$ for some multigraded (not necessarily minimal) free resolution $\FF_\bullet$ of $R/J$, and for all $\ell\geq 1$;
        \item $\mdeg(\mathbb{F}_2) \subseteq G(\sfp_2(J))$ for some multigraded (not necessarily minimal) free resolution $\FF_\bullet$ of $R/J$.
	\end{enumerate} 
\end{Theorem}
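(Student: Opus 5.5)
The plan is to prove the cycle (a)$\Rightarrow$(b)$\Rightarrow$(c)$\Rightarrow$(a).

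\emph{(a)$\Rightarrow$(b).} This is \cref{All-Degrees-Show-Up} applied to a minimal multigraded free resolution. For $\ell=1$ we read $\FF_1$ as the free module on $G(J)$, and $G(\sfp_1(J))=G(J)$ because $J$ is squarefree. For $\ell>\h J$ both sides are empty: $\sfp_\ell(J)=0$ by \cref{Sqfree-parts}(3), and the resolution has length $\h J$ because $J$ is Cohen--Macaulay.

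\emph{(b)$\Rightarrow$(c).} This is immediate.

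\emph{(c)$\Rightarrow$(a).} This is the substantive direction. Write $J=J(\Delta)$ for a simplicial complex $\Delta$, so that $G(J)$ is the set of minimal vertex covers, i.e.\ minimal transversals of the facets. By \cref{Colon-Circuits-Equivalence}, it suffices to show that the supports of the elements of $G(J)$ satisfy the circuit elimination axiom: for distinct $N_1,N_2\in G(J)$ and $v\in\supp{N_1}\cap\supp{N_2}$, some $N_3\in G(J)$ divides $\LCM(N_1,N_2)/x_v$.

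The key homological input is the following. For any multigraded free resolution $\FF$ of $R/J$, and any pair $N_1,N_2$, the syzygy relating $e_{N_1}$ and $e_{N_2}$ must be generated by elements of $\FF_2$. Hence every minimal first-syzygy multidegree of $J$ appears in $\mdeg(\FF_2)$. By the Taylor complex, the minimal multigraded shifts of $\FF_2$ include, for each pair, a shift dividing $\LCM(N_1,N_2)$. More precisely, I would use the standard fact that $\beta_{2,M}(R/J)\neq 0$ for the monomials $M$ that are minimal among the $\LCM(N_1,N_2)$ with respect to divisibility (the lcm-lattice description). Combined with (c), each such minimal $M$ lies in $G(\sfp_2(J))$, and so lies in $J^{(2)}$: every facet of $\Delta$ meets $\supp M$ in at least two vertices.

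Now suppose elimination fails for some $N_1,N_2,v$. Choose the counterexample with $\LCM(N_1,N_2)$ minimal. Restricting to $A=\supp{\LCM(N_1,N_2)}$, as in \cref{Ideal-Restriction-Structure}(2), I would argue that $\LCM(N_1,N_2)$ is minimal among lcms of pairs of generators of $J|_A$. Otherwise, a smaller pair would either itself violate elimination, contradicting the choice of counterexample, or supply a generator dividing $\LCM(N_1,N_2)/x_v$. Then the displayed fact gives $\LCM(N_1,N_2)\in J^{(2)}$. On the other hand, since no generator divides $M/x_v$ with $M=\LCM(N_1,N_2)$, we have $M/x_v\notin J$. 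So some facet $F$ of $\Delta$ misses $\supp{M}-\{v\}$. Then $F\cap\supp{M}\subseteq\{v\}$, which contradicts $\gamma_M(F)\geq 2$.

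\textbf{Main obstacle.} The main obstacle is the minimality reduction: ensuring that the lcm in question genuinely appears as a second-syzygy shift in an arbitrary, possibly non-minimal, resolution. My first attempt would use that any resolution contains the minimal one as a direct summand, so $\mdeg(\FF_2)$ contains all $M$ with $\beta_{2,M}\neq 0$. I would then use the characterization of $\beta_{2,M}$ via reduced homology of the lcm-lattice interval below $M$. For a lattice atom-minimal lcm of two generators, this interval is two points, so its homology is nonzero. The hypothesis $\h J\geq 2$ is used to exclude the degenerate case where $J$ is principal or has a variable as a generator; in that case $\sfp_2$ behaves differently.
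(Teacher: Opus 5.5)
Your proposal is correct and follows the paper's route: (a)$\Rightarrow$(b) is \cref{All-Degrees-Show-Up}, (b)$\Rightarrow$(c) is trivial, and for (c)$\Rightarrow$(a) both arguments use that the divisibility-minimal pairwise lcms of $G(J)$ must occur as shifts in $\FF_2$, hence lie in $\sfp_2(J)\subseteq J^{(2)}$, which forces circuit elimination. You justify that step correctly via Tor and the lcm lattice (the open interval below such an lcm is a discrete set of at least two atoms). You are also more careful than the paper, which asserts the stronger equality $\mdeg(\FF_2)=\mdeg(\LCM_2(J))$; that equality fails in general, e.g.\ for the Taylor resolution, while only your inclusion is needed. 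Your minimal-counterexample reduction is harmless but unnecessary, since every pairwise lcm is a multiple of a minimal one and so already lies in $J^{(2)}$.
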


\begin{proof} (a) $\Lra$ (b) is proved in \cref{All-Degrees-Show-Up}. (b) $\Lra$ (c) is obvious. (c) $\Lra$ (a) For any monomial ideal $J$ with $G(J)= \{N_1,...,N_r\}$, we define the ideal 
$$\LCM_2(J) = (\{ \LCM(N_i, N_j) : 2 \leq j \leq r, 1 \leq i < j \} ).$$

It is not hard to see that $\mdeg(\mathbb{F}_2) = \mdeg(\LCM_2(J))$, since all of the shifts of $\mathbb{F}_2$ are minimal elements of the form $(N_i : N_j)N_j = \LCM(N_i, N_j) $ for $i < j$.

Then by assumption $\LCM_2(J) \subseteq \sfp_2(J)$. It can be shown by using the circuit axioms that any squarefree ideal $J$ is \Cmatroid if and only if $\LCM_2(J) \subseteq \sfp_2(J)$.
\end{proof}

\begin{Remark} To check the above, one can take $\mathbb{F}_{\bullet}$ to be the Taylor resolution. 
\end{Remark}

In contrast with other characterizations of \Cmatroid ideals, or properties of some \Cmatroid ideals, we give an example illustrating that one may not replace (c) in \Cref{Char-Matroid-100} with ``$\mdeg(\mathbb{F}_\ell) \subseteq G(\sfp_\ell(J))$", or even the stronger condition ``$\mdeg(\mathbb{F}_\ell) = G(\sfp_\ell(J))$", for some $\ell\geq 3$. The example is optimal under several regards -- it has minimal $\ell$, height, number of variables, and initial degree. 
\begin{Example}\label{l>2}
Let $J = (x_1x_2, x_1x_3,x_3x_4,x_4x_5,x_2x_3x_5) \subseteq \kk[x_1,...,x_5]$. It is a Cohen-Macaulay ideal of height $3$ such that $\mdeg(\mathbb{F}_3) =\{x_1x_2x_3x_4x_5\} = G(\sfp_3(J))$, but $J$ is not a \Cmatroid ideal.
\end{Example}

As a final application we give a new, short proof of a known result on regularity and the level property of \Cmatroid ideals \cite[Cor.~of~Thm.~9]{Stanley}, which is an immediate consequence of \cref{Matroid-Resolution}. Recall that, in our context, for any homogeneous ideal $J$, the Castelnuovo--Mumford regularity of $R/J$ 
is ${\rm reg}(R/J)=\max\{j-i\,\mid\,\beta_{i,j}(R/J)\}$.

\begin{Corollary}\label{Regularity} Let $J=J(\M)$ for a matroid $\M$. Then $R/J$ is a level algebra of $\reg(R/J) = |\msupp J | - \h\, J$. In particular, if $\M$ is loopless then $\reg(R/J) = \dim(R/J)$. 
\end{Corollary}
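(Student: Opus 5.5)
By \cref{All-Degrees-Show-Up}, for every homological degree $h$ the multidegrees of the minimal resolution $\res_\bullet$ of $R/J$ are exactly the monomials in $G(\sfp_h(J))$. Both regularity and the level property can therefore be read off from the degrees of these squarefree generators. I will show that in the top homological degree there is a single shift, and that in every degree the quantity $\deg-h$ is at most its value in the top degree.

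\textbf{Step 1 (top degree).} Set $c=\h\,J=r(\M)$. Since $J$ is Cohen--Macaulay, ${\rm pd}(R/J)=c$. By \cite[Cor~3.21]{MN26}, $\sfp_c(J)$ is principal. I will identify its generator as $x_S$ with $S=\msupp J$. The equality $\sfp_c(J)=J(\M^{\skel{1}})$ from \cref{Sqfree-parts}(1) gives this directly: the rank-one truncation has as facets all non-loop vertices. Hence its cover ideal is the principal ideal $(x_S)$, where $S$ is the set of non-loops of $\M$, and by \cref{Basic-Matroid-Properties}(7) this set is $\msupp J$. Thus $\res_c=R(-x_S)^{\beta}$ is concentrated in the single degree $|S|$. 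This gives levelness once we know the socle degree matches; in fact levelness of a Cohen--Macaulay quotient is equivalent to the last module being generated in one degree, which we have just shown.

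\textbf{Step 2 (regularity bound).} For $h<c$ and $N\in G(\sfp_h(J))$, I need $\deg N-h\le |S|-c$. By \cref{Ideal-Restriction-Structure}(5) and the proof of \cref{All-Degrees-Show-Up}, $J|_{\supp N}$ is $C$-matroidal of height $h$ and has $N$ as its top shift. So, writing $T=\supp N$, I need
\[|T|-h\le |S|-c.\]
Equivalently, I must show that adding the elements of $S-T$ raises the height by at most $|S-T|$. The height of $J(\M|_T)$ is $\max\{|F\cap T| : F\in\B(\M)\}$, and passing from $T$ to $S$ increases this maximum by at most one for each added element. This gives $c-h\le|S|-|T|$.

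\textbf{Step 3 (conclusion).} Combining the two steps,
\[\reg(R/J)=\max_h\{\deg N-h\}=|S|-c=|\msupp J|-\h\,J.\]
If $\M$ is loopless, then $S=[n]$, so $\reg(R/J)=n-\h\,J=\dim(R/J)$.

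The main subtlety is Step 2: one must interpret $T$ correctly via restriction, using the characterization of $\h\,J(\M|_T)$ from \cref{Ideal-Restriction-Structure}(5), and check that the maximum of $|F\cap T|$ is monotone with increments at most one. The rest is bookkeeping.
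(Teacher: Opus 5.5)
Steps 1 and 3 are fine, and Step 1 is essentially the paper's argument: levelness follows from the single shift $x_S$ in homological degree $c$. Your derivation of $x_S$ via $\sfp_c(J)=J(\M^{[1]})$ and the description of loops also fills in a point the paper only asserts.

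The gap is in Step 2, where you conflate two different ideals. The ideal of height $h$ is the restriction $J|_T=J\cap\kk[T]=\bigcap_{F\in\B(\M)}\p_{F\cap T}$. Its height is $\min\{|F\cap T| : F\in\B(\M)\}$ (cf.\ \Cref{examples-rmk}(1)). The formula $\max\{|F\cap T| : F\in\B(\M)\}=r_\M(T)$ from \cref{Ideal-Restriction-Structure}(5) gives instead the height of $J(\M|_T)$, which is a different ideal (\cref{Rmk-2.15}). The two really do differ: for $\M=U_{2,4}$, $h=1$, $N=x_1x_2x_3$, you get $J|_T=(x_1x_2x_3)$ of height $1$, but $J(\M|_T)=J(U_{2,3})$ of height $2$. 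Since $h\le r_\M(T)$ always, your increment argument only proves $c-r_\M(T)\le|S-T|$. That is weaker than the required $c-h\le|S-T|$, so Step 2 does not follow as written.

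The fix keeps your idea but uses the minimum. Every basis $F$ is contained in $S$, so $c=|F|\le|F\cap T|+|S-T|$ for every $F$. Choosing $F$ to minimize $|F\cap T|$ gives $c\le h+|S-T|$. Equivalently, $T$ is the complement of a flat of $\M$ of rank $c-h$, and that flat contains all loops, so $c-h\le|S-T|$.

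More economically, you can drop Step 2 entirely, as the paper does. You already observed that $R/J$ is Cohen--Macaulay with ${\rm pd}(R/J)=c$. For such quotients $\reg(R/J)=\max\{j:\beta_{c,j}(R/J)\neq 0\}-c$. So the single top shift $x_S$ from Step 1 gives both levelness and $\reg(R/J)=|S|-c$ at once.
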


\begin{proof} Let $\res_{\bullet}$ be a resolution of $J$ by iterated mapping cones using the ordering of \cref{Con-Ordering}. By \cref{Matroid-Resolution} this resolution is minimal. Set $c = \h\, J$. Since $J$ is Cohen-Macaulay we know $$\reg(R/J) = \max \{ j : \beta_{c, j}(R/J) \neq 0 \} - \h\, J.$$

By \cref{Matroid-Resolution} the multidegrees of $\res_{c}$ is contained in $G(\sfp_c(J))$. But  $J^{(c)}$ has only one minimal squarefree generator, it is $\prod_{x_i\in \textrm{m-supp}(J)} x_i$. Hence the statements follow. For the in particular, if $\M$ is loopless then every variable is independent, hence $\msupp{J} = \{x_1,...,x_n\}$. So $|\msupp J| - \h\,J = \dim R - \h\,J=\dim(R/J)$.
\end{proof}

\bibliographystyle{amsalpha}
\bibliography{MatroidBib}

\end{document}